\documentclass{amsart}

\setlength{\textwidth}{170mm} \setlength{\textheight}{8.0in} \setlength{\oddsidemargin}{-0.8cm} \setlength{\evensidemargin}{-0.8cm} \setlength{\footskip}{30pt} \addtolength{\textheight}{.695in} \addtolength{\voffset}{-.55in} 
\setlength{\parindent}{15pt} 

\usepackage{amsmath}
\usepackage{amssymb}
\usepackage{amsfonts}
\usepackage{graphicx}
\usepackage{texdraw}
\usepackage{graphpap}
\usepackage{enumitem}
\usepackage{color}
\usepackage[OT2,T1]{fontenc}
\usepackage{verbatim} 
\usepackage{multirow}
\usepackage{mathtools}
\usepackage[colorlinks=true, citecolor=blue, filecolor=black, linkcolor=black, urlcolor=black]{hyperref}
\usepackage{bm}
\usepackage{bbm}

\usepackage{mathtools}
\mathtoolsset{showonlyrefs}
\usepackage{todonotes}
\usepackage{appendix}

\theoremstyle{plain}
\newtheorem{thm}{Theorem}[section]

\newtheorem{lem}[thm]{Lemma}

\theoremstyle{remark}

\newtheorem*{rem}{Remark}

\numberwithin{equation}{section}

\newcommand{\R}{{\mathbb R}}

\newcommand{\ZZ}{\mathcal{Z}}

\renewcommand{\d}{{\partial}}

\newcommand{\Lap}{\Delta}

\def\C{\mathbb{C}}

\def\R{\mathbb{R}}

\def\wt{\widetilde}

\numberwithin{equation}{section}

\newcommand{\RN}[1]{%
	\textup{\uppercase\expandafter{\romannumeral#1}}%
}

\begin{document}

\title[Partition functions of determinantal and Pfaffian Coulomb gases]{Partition functions of determinantal and Pfaffian Coulomb gases with radially symmetric potentials}


\author{Sung-Soo Byun}
\address{Center for Mathematical Challenges, Korea Institute for Advanced Study, 85 Hoegiro, Dongdaemun-gu, Seoul 02455, Republic of Korea}
\email{sungsoobyun@kias.re.kr}

\author{Nam-Gyu Kang}
\address{School of Mathematics, Korea Institute for Advanced Study, 85 Hoegiro, Dongdaemun-gu, Seoul 02455, Republic of Korea}
\email{namgyu@kias.re.kr}

\author{Seong-Mi Seo}
\address{Department of Mathematics, Chungnam National University, 99 Daehak-ro, Yuseong-gu, Daejeon 34134, Republic of Korea.}
\email{smseo@cnu.ac.kr}


\date{\today}
\thanks{Sung-Soo Byun and Nam-Gyu Kang were partially supported by Samsung Science and Technology Foundation (SSTF-BA1401-51) and by the National Research Foundation of Korea (NRF-2019R1A5A1028324). 
Sung-Soo Byun was partially supported by a KIAS Individual Grant (SP083201) via the Center for Mathematical Challenges at Korea Institute for Advanced Study.
Nam-Gyu Kang was partially supported by a KIAS Individual Grant (MG058103) at Korea Institute for Advanced Study.
Seong-Mi Seo was partially supported by the National Research Foundation of Korea (2019R1A5A1028324, NRF-2022R1I1A1A01072052).}


\begin{abstract}
We consider random normal matrix and planar symplectic ensembles, which can be interpreted as two-dimensional Coulomb gases having determinantal and Pfaffian structures, respectively. 
For general radially symmetric potentials, we derive the asymptotic expansions of the log-partition functions up to and including the $O(1)$-terms as the number $N$ of particles increases. 
Notably, our findings stress that the formulas of the $O(\log N)$- and $O(1)$-terms in these expansions depend on the connectivity of the droplet. 
For random normal matrix ensembles, our formulas agree with the predictions proposed by Zabrodin and Wiegmann up to a universal additive constant. 
For planar symplectic ensembles, the expansions contain a new kind of ingredient in the $O(N)$-terms, the logarithmic potential evaluated at the origin in addition to the entropy of the ensembles.  
\end{abstract}

\maketitle

\section{Introduction and main results}
The Coulomb gas ensemble in the complex plane is governed by the law
\begin{equation} \label{Gibbs cplx beta}
 d P_N^{(\beta)}(z_1,\dots,z_N) :=\frac{1}{Z_N^{(\beta)}} \prod_{j>k=1}^N |z_j-z_k|^{\beta} \prod_{j=1}^N e^{-\frac{\beta N}{2} Q(z_j)}\, dA(z_j),
\end{equation}
where $N$ is the number of particles, $\beta$ is the inverse temperature and $dA(z):=d^2z/\pi$ is the area measure.
Here, $Q:\C \to \R$ is called the confining/external potential that satisfies suitable potential theoretic conditions.
We refer to \cite{forrester2010log,Lewin22,Serfaty} and references therein for recent developments of two-dimensional Coulomb gases. 
Contrary to \eqref{Gibbs cplx beta}, the configurational canonical Coulomb gas ensemble in the upper-half plane \cite{kiessling1999note} (cf. \cite[Appendix A]{byun2022spherical}) has an additional complex conjugation symmetry (i.e. the particles come in complex conjugate pairs) and is governed by the law
\begin{equation} \label{Gibbs symplectic beta}
 d \wt{P}_N^{(\beta)}(z_1,\dots,z_N) :=\frac{1}{ \wt{Z}_N^{(\beta)}} \prod_{j>k=1}^N |z_j-z_k|^\beta |z_j-\bar{z}_k|^\beta \prod_{j=1}^N |z_j-\bar{z}_j|^\beta \, e^{-\beta N Q(z_j)}\, dA(z_j). 
\end{equation}
In \eqref{Gibbs cplx beta} and \eqref{Gibbs symplectic beta}, the normalization constants 
\begin{align} \label{Zn normal def}
Z_N^{(\beta)}&:=\int_{\C^N} \prod_{j>k=1}^N |z_j-z_k|^\beta \prod_{j=1}^N e^{-\frac{\beta N}{2} Q(z_j)}\, dA(z_j), 
\\ 
\wt{Z}_N^{(\beta)}&:= \int_{\C^N} \prod_{j>k=1}^N |z_j-z_k|^\beta |z_j-\bar{z}_k|^\beta \prod_{j=1}^N |z_j-\bar{z}_j|^\beta \, e^{-\beta N Q(z_j)}\, dA(z_j) \label{Zn symplectic def}
\end{align}
that make \eqref{Gibbs cplx beta} and \eqref{Gibbs symplectic beta} probability measures are called partition functions.
Furthermore, the logarithm of a partition function (divided by $N^2$) is often called free energy.

For the special value $\beta=2$, \eqref{Gibbs cplx beta} and \eqref{Gibbs symplectic beta} represent joint probability distributions of the random normal matrix and planar symplectic ensembles, respectively. 
In particular, if $Q(z)=|z|^2$, these correspond to the complex and symplectic Ginibre ensembles \cite{ginibre1965statistical}. 
An important feature of this special value $\beta=2$ is that, due to the Vandermonde determinant terms, the ensembles \eqref{Gibbs cplx beta} and \eqref{Gibbs symplectic beta} form \emph{determinantal} and \emph{Pfaffian} point processes in the plane \cite{forrester2010log}, respectively. 
In other words, all their correlation functions can be expressed in terms of the (pre-)kernel of planar (skew-)orthogonal polynomials. 
In the sequel, for $\beta=2$, we omit the superscript $(\beta)$ in \eqref{Zn normal def} and \eqref{Zn symplectic def}, and simply write $Z_N \equiv Z_N^{(2)}$ and $\wt{Z}_N \equiv \wt{Z}_N^{(2)}$. 

We mention that the definition of partition functions \eqref{Zn normal def} and \eqref{Zn symplectic def} is more common in the statistical physics community. 
On the other hand, in the random matrix theory community, another widely used convention for the (canonical) partition functions is
\begin{equation} \label{partition another convention}
\ZZ_N:=\frac{1}{N!} Z_N, \qquad \wt{\ZZ}_N:= \frac{1}{N!} \wt{Z}_N,
\end{equation}
see e.g. \cite[Section 1.4]{forrester2010log}.
The prefactor $1/N!$ in \eqref{partition another convention} allows writing $\ZZ_N$ and $\wt{\ZZ}_N$ in terms of a structured determinant and Pfaffian, respectively.

In this work, we study the asymptotic expansions of $Z_N$ and $\wt{Z}_N$ as $N \to \infty$.

\subsection{Summary of previous results}

Before introducing our results, let us summarize some known results on the asymptotics of $Z_N^{(\beta)}$ for general $\beta$ and $Q$.
Cf. the literature on $\wt{Z}_N^{(\beta)}$ is much more limited. 

\begin{itemize}
  \item \textbf{(Zabrodin-Wiegmann prediction)} In \cite{MR2240466}, it was predicted that the partition function $Z_N^{(\beta)}$ has an asymptotic expansion of the form 
  \begin{equation} \label{ZW formula}
   \log Z_N^{(\beta)} = C_0 N^2+ C_1 N \log N +C_2 N +C_3 \log N + C_4 +O(\frac{1}{N}). 
  \end{equation}
  Furthermore, they proposed explicit formulas for the constants $C_j\equiv C_j(\beta,Q)$ $(j=0,\dots,4)$ depending on $\beta$ and $Q$, cf. \eqref{ZW formula radial}. 
  (See also \cite{can2015exact,jancovici1994coulomb} for a similar prediction in a different setup.)
  Incidentally, the formulas for $C_3$ and $C_4$ in \cite{MR2240466} have been controversial as pointed out for instance in \cite{sandier20152d,serfaty2020gaussian}. 
  \smallskip 
  \item \textbf{(Asymptotic of the leading order $O(N^2)$-term)} It was shown in \cite[Theorem 2.11]{HM13} and \cite[Theorem 1.1]{chafai2014first} (among others) that as $N \to \infty$,
  \begin{equation}
  \log Z_N^{(\beta)} = -\frac{\beta}{2}N^2 I_Q[\mu_Q] + o(N^2).
\end{equation} 
Here $\mu_Q$ is Frostman's equilibrium measure \cite{ST97}, a unique probability measure that minimizes the weighted logarithmic energy
\begin{equation} \label{IQ functional}
I_Q[\mu] := \iint_{\C^2} \log|z-w| \,d\mu(z)\,d\mu(w) + \int_{\C} Q \,d\mu. 
\end{equation}
  \item \textbf{(Asymptotic up to the $O(N)$-term)} It was shown by Lebl\'{e} and Serfaty \cite[Corollary 1.1]{MR3735628} that as $N \to \infty$, 
  \begin{equation} \label{LS formula}
  \log Z_N^{(\beta)} = -\frac{\beta}{2}N^2 I_Q[\mu_Q] + \frac{\beta}{4}N \log N - \Big( C(\beta) + \Big(1-\frac{\beta}{4}\Big) E_Q[\mu_Q] \Big) N+ o(N),
\end{equation}
where $C(\beta)$ is a constant independent of the potential $Q$ and
\begin{equation}\label{entropy} 
  E_Q[\mu_Q] := \int_{\C} \log (\Lap Q) \, d\mu_{Q}
\end{equation} 
is the entropy associated with $\mu_Q$. Here, $\Lap:=\partial \bar{\partial}$ is the quarter of the usual Laplacian. 
We refer the reader to \cite{bauerschmidt2016two,serfaty2020gaussian} for the expansion \eqref{LS formula} with quantitative error bounds.
\end{itemize}

Beyond the general cases mentioned above, for $\beta=2$ with a specific (and fundamental from the random matrix theory viewpoint) potential, there have been several works on the precise asymptotic expansion of the partition functions, see e.g. \cite{charlier2021large,charlier2021asymptotics} and references therein. 
This type of potential usually contains certain singularities. As a result, the asymptotic expansions of the associated partition functions are more complicated (for instance, some non-trivial $O(\sqrt{N})$ terms appear as well).
Several topics in this direction will be discussed in a separate remark at the end of the next subsection.

\subsection{Main results}

We study asymptotic behaviors of $Z_N$ and $\wt{Z}_N$ for the exactly-solvable case where $Q$ is radially symmetric. 
Our main findings are summarized as follows.
\begin{enumerate}[label=(\roman*)]
  \item We derive the large-$N$ expansions of $\log Z_N$ and $\log \wt{Z}_N$ up to and including the $O(1)$-terms. 
  \smallskip 
  \item In the large-$N$ expansions, the formulas of the $O(\log N)$- and $O(1)$-terms depend on whether the limiting spectrum is an \emph{annulus} or a \emph{disc}, see Theorems~\ref{Thm_ZN annulus} and \ref{Thm_ZN disc}, respectively, cf. Figure~\ref{fig:droplets}. 
  (This distinction is crucial in the asymptotic analysis but seems not considered in \cite{MR2240466}.)
  \smallskip 
  \item For the partition function $Z_N$ of random normal matrix ensembles, our expansions \eqref{ZN expansion cplx annulus} and \eqref{ZN expansion cplx disc} up to the $O(N)$-terms agree with the formula \eqref{LS formula} with $\beta=2$.
  Furthermore, we verify from \eqref{ZN expansion cplx disc} that the asymptotic formula given in \cite[Eqs.(1.2), (C.7)]{MR2240466} holds up to an additive constant \eqref{ZW missing}. 
  \smallskip 
  \item For the partition function $\wt{Z}_N$ of planar symplectic ensembles, the asymptotic formulas \eqref{ZN expansion symp annulus} and \eqref{ZN expansion symp disc} are new to the best of our knowledge. 
  Contrary to \eqref{LS formula}, the $O(N)$-terms in these expansions contain not only the entropy but also the logarithmic potential \eqref{logarithmic potential}. 
\end{enumerate}

\begin{figure}[h!]
\begin{center}
\centering
\includegraphics[width=0.8\textwidth]{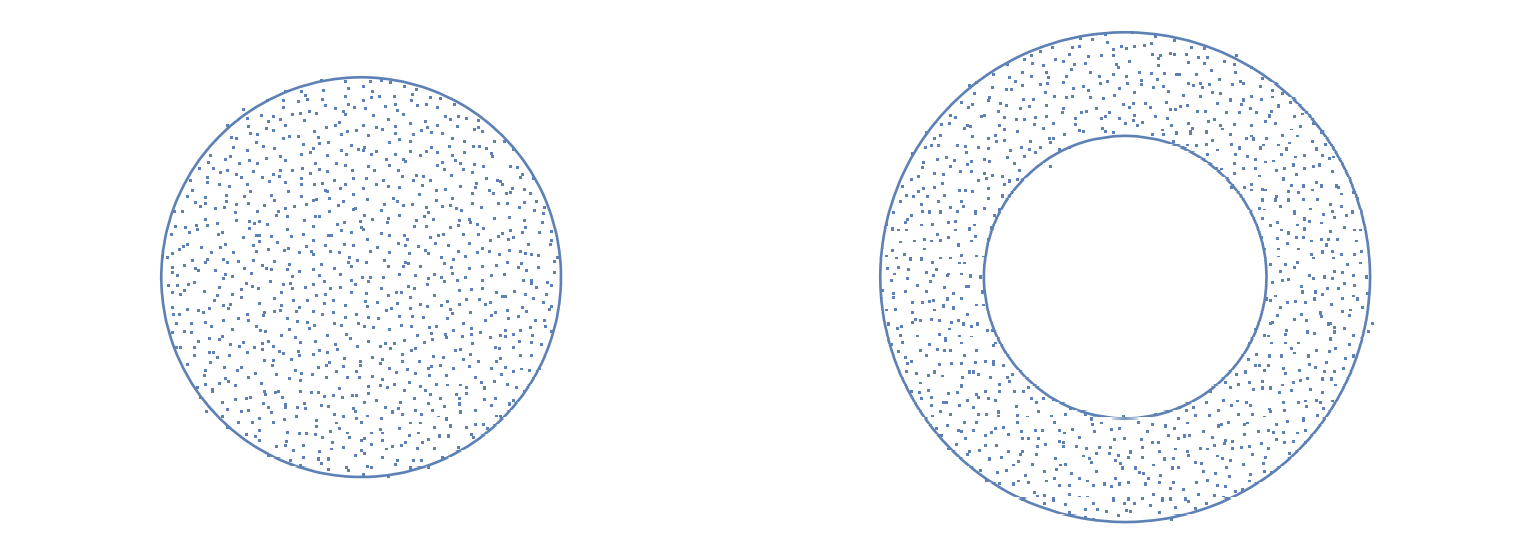}
\caption{Eigenvalues of complex Ginibre (left) and complex induced Ginibre (right) matrices where $N=1000$.} 
\label{fig:droplets}
\end{center}
\end{figure}

Let us be more precise in introducing our results. 
It is well known \cite{MR2934715,HM13} that under some mild assumptions on $Q$, as $N \to \infty$, the empirical measures $\frac{1}{N}\sum_{j=1}^N \delta_{z_j}$ of \eqref{Gibbs cplx beta} and \eqref{Gibbs symplectic beta} weakly converge to $\mu_Q$, which takes the form
\begin{equation} \label{eq msr Lap Q}
d\mu_Q =\Delta Q \cdot \mathbbm{1}_S \,dA.
\end{equation}
Here $S \equiv S_Q$ is a certain compact subset of $\C$ called the \emph{droplet}, see Figure~\ref{fig:droplets}. 

We consider the case where the external potential $Q$ is radially symmetric, i.e. $Q(z) = q(|z|)$ for some function $q$ defined in $[0,\infty)$. 
We assume the basic growth condition 
\begin{equation} \label{Q growth condition}
\liminf_{ |z| \to \infty } \frac{Q(z)}{ 2\log |z| } >1,  
\end{equation}
which guarantees that $Z_N, \wt{Z}_N < +\infty.$
Furthermore, we assume that $Q$ is smooth, subharmonic in $\C$, and strictly subharmonic in a neighborhood of the droplet. 
In terms of the function $q,$ the latter conditions are equivalent to the requirements that $r q'(r)$ is increasing on $(0,\infty)$, and strictly increasing in a neighborhood of the droplet, cf. \eqref{Delta Q q' q''}. 
Under the above assumptions, the droplet is given by 
\begin{equation} \label{droplet}
S = \mathbb{A}_{r_0,r_1}:= \{z \in \C : r_0 \leq |z| \leq r_1\},
\end{equation}
where $r_0$ is the largest solution to $r q'(r)=0$ and $r_1$ is the smallest solution to $r q'(r)=2$, see \cite[Section IV.6]{ST97}.
(We mention that the annular droplets often appear in non-Hermitian random matrix theory, see e.g. \cite{MR2831116}.)
In particular, if $r_0=0$, we denote $\mathbb{D}_{r_1}= \mathbb{A}_{0,r_1}.$
Henceforth, we keep the assumptions on $Q$ described above. 

For a radially symmetric potential $Q$, by using \eqref{eq msr Lap Q} and \eqref{droplet}, one can show that the energy $I_Q[\mu_Q]$ in \eqref{IQ functional} is given by 
\begin{equation} \label{energy radially sym}
I_Q[\mu_Q]=q(r_1)-\log r_1 -\frac14 \int_{r_0}^{r_1} rq'(r)^2\,dr. 
\end{equation}
Similarly, in terms of the logarithmic potential 
\begin{equation} \label{logarithmic potential}
U_\mu(z) = \int \log\frac1{|z-w|}\, d\mu(w),
\end{equation}
we have 
\begin{equation} \label{potential radially sym}
U_{\mu_Q}(0) = -\int_{S}\log |w| \, d\mu_Q(w) = - \log r_1 + \frac{q(r_1)-q(r_0)}{2}. 
\end{equation}
See \cite[Section IV.6]{ST97} for more details. 

\medskip 

For the annular droplet case, we have the following.

\begin{thm} \textbf{\textup{(Large-$N$ expansion of the partition functions: annular droplet case)}} \label{Thm_ZN annulus}
Suppose that $r_0>0$, i.e. the droplet $S$ in \eqref{droplet} is an annulus. 
Let 
\begin{equation} \label{FQ annulus}
 F_Q[\mathbb{A}_{r_0,r_1}] := \frac{1}{12} \log\Big( \frac{ r_0^2\Delta Q(r_0) }{ r_1^2\Delta Q(r_1) } \Big) -\frac{1}{16} \Big( r_1 \frac{ (\d_r \Delta Q)(r_1) }{ \Delta Q(r_1) } - r_0 \frac{ (\d_r \Delta Q)(r_0) }{ \Delta Q(r_0) } \Big) +\frac{1}{24} \int_{r_0}^{r_1} \Big( \frac{ \d_r \Delta Q(r) }{ \Delta Q(r) }\Big)^2\,r \,dr. 
\end{equation}
Then as $N \to \infty,$ the following holds. 
\begin{enumerate}[label=(\roman*)]
  \item \textup{\textbf{(Random normal matrix ensemble)}} We have
\begin{equation} \label{ZN expansion cplx annulus}
\begin{split} 
\log Z_N &= -N^2 I_Q[\mu_Q] + \frac{1}{2}N\log N + \Big( \frac{\log(2\pi)}{2}-1- \frac12 E_Q[\mu_Q] \Big) \, N 
\\
&\quad + \frac{1}{2}\log N + \frac{\log (2\pi)}{2} +F_Q[\mathbb{A}_{r_0,r_1}] + O(N^{-1}).
\end{split}
\end{equation}
  \item \textup{\textbf{(Planar symplectic ensemble)}} We have 
\begin{equation}
\begin{split} \label{ZN expansion symp annulus}
 \log \wt{Z}_N & = -2N^2 I_Q[\mu_Q] +\frac{1}{2}N \log N + \Big( \frac{ \log(4\pi) }{2}-1 - U_{\mu_Q}(0) -\frac12 E_Q[\mu_Q] \Big) N  
\\
&\quad+\frac12 \log N + \frac{\log(2\pi)}{2} +\frac12 F_Q[\mathbb{A}_{r_0,r_1}]+\frac{1}{8} \log \Big( \frac{ \Delta Q(r_0) }{ \Delta Q(r_1) } \Big) +O(N^{-1}).
\end{split}
\end{equation}
\end{enumerate}
\end{thm}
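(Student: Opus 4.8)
The plan is to reduce both partition functions to products of one‑dimensional radial integrals, to expand each factor by a Laplace expansion that is \emph{uniform} in the summation index, and to resum by Euler--Maclaurin; the whole point of the hypothesis $r_0>0$ is that then all the relevant saddle points stay inside a fixed compact subset of $(0,\infty)$, so the argument runs in a single asymptotic regime with no boundary layer. First I would record the exact product formulas. For $\beta=2$ and radially symmetric $Q$ the monic orthogonal polynomials for the weight $e^{-NQ}\,dA$ are the monomials $z^k$, with squared norms $h_k=2\int_0^\infty t^{2k+1}e^{-Nq(t)}\,dt$, so $Z_N=N!\prod_{k=0}^{N-1}h_k$. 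For $\wt Z_N$ the de~Bruijn integration formula gives $\wt Z_N=\tilde c_N\,\mathrm{Pf}\big[\langle\zeta^{j},\zeta^{k}\rangle_s\big]_{j,k=0}^{2N-1}$ with an explicit combinatorial prefactor $\tilde c_N$ (a product of $N!$ and a power of $2$); for radial $q$ the angular integration kills $\langle\zeta^{j},\zeta^{k}\rangle_s$ unless $|j-k|=1$, so the skew‑Gram matrix is skew‑tridiagonal, the Pfaffian collapses to $\prod_{k=0}^{N-1}\langle\zeta^{2k},\zeta^{2k+1}\rangle_s$, and each skew‑norm reduces to a radial integral proportional to $\int_0^\infty t^{4k+3}e^{-2Nq(t)}\,dt$. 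Thus $\log Z_N=\log N!+\sum_{k=0}^{N-1}\log h_k$ and $\log\wt Z_N=\log\tilde c_N+\sum_{k=0}^{N-1}\log\wt h_k$ with $\wt h_k$ of this form.

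Next comes the analytic core. Writing $h_k=2\int_0^\infty e^{-N V_{\tau_k}(t)}\,dt$ with $V_\tau(t)=q(t)-2\tau\log t$ and $\tau_k=(2k+1)/(2N)$, the critical point $\gamma_\tau$ is the unique solution of $\gamma q'(\gamma)=2\tau$, and for $\tau\in(0,1)$ it lies in $(r_0,r_1)$; since $r_0>0$, as $k$ runs over $0,\dots,N-1$ the points $\gamma_{\tau_k}$ stay in the \emph{compact} set $[r_0,r_1]\subset(0,\infty)$, while $V_\tau''(\gamma_\tau)=q''(\gamma_\tau)+q'(\gamma_\tau)/\gamma_\tau=4\Delta Q(\gamma_\tau)$ is bounded and bounded away from $0$ by strict subharmonicity near the droplet. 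Hence Laplace's method applies uniformly in $k$, including its subleading term,
\[
\log h_k=-N V_{\tau_k}(\gamma_{\tau_k})+\tfrac12\log\tfrac{2\pi}{N}-\tfrac12\log\!\big(4\,\Delta Q(\gamma_{\tau_k})\big)+\log 2+\frac{a_1(\gamma_{\tau_k})}{N}+O(N^{-2}),\qquad a_1=\frac{5(V''')^2}{24(V'')^3}-\frac{V^{(4)}}{8(V'')^2},
\]
with $a_1$ evaluated at $\gamma_{\tau_k}$; the same holds for $\log\wt h_k$ with the large parameter $2N$ and the shifted abscissae $\tilde\tau_k=(4k+3)/(4N)$. (The contributions of $t\to0^+$ and $t\to\infty$ are negligible by monotonicity of $V_\tau$ and by \eqref{Q growth condition}.)

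Then one resums. For $Z_N$ one feeds the expansion into $\sum_{k=0}^{N-1}$ and exploits that $\tau_k=(k+\tfrac12)/N$ is the midpoint rule; the elementary facts needed are $\int_0^1 V_\tau(\gamma_\tau)\,d\tau=I_Q[\mu_Q]$ (change variables $\tau=\tfrac12 rq'(r)$, under which $d\tau=2r\,\Delta Q(r)\,dr$ is the radial part of $\mu_Q$, then use \eqref{energy radially sym}), $\int_0^1\log\Delta Q(\gamma_\tau)\,d\tau=\int_S\log(\Delta Q)\,d\mu_Q=E_Q[\mu_Q]$, and the envelope identity $\tfrac{d}{d\tau}V_\tau(\gamma_\tau)=-2\log\gamma_\tau$. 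Together with Stirling for $\log N!$ these produce the $N^2$‑, $N\log N$‑, $N$‑ and $\log N$‑coefficients of \eqref{ZN expansion cplx annulus}, and the $O(1)$‑term comes out as $\tfrac12\log(2\pi)$ (Stirling) $+\,\tfrac1{12}\log(r_0/r_1)$ (the midpoint correction to the $-N^2$‑part, via the envelope identity) $+\int_0^1 a_1(\gamma_\tau)\,d\tau$; converting the last integral to one over $[r_0,r_1]$, substituting $V_\tau^{(j)}(\gamma_\tau)$ in terms of $\Delta Q$ and its radial derivatives, and integrating by parts reorganizes it into $F_Q[\mathbb A_{r_0,r_1}]$ as in \eqref{FQ annulus}. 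For $\wt Z_N$ one repeats this with parameter $2N$ and abscissae $\tilde\tau_k=(k+\tfrac34)/N$: the non‑midpoint offset makes the first Euler--Maclaurin boundary term $B_1(\tfrac34)=\tfrac14$ active, contributing $-\tfrac N2\big(V_1(r_1)-V_0(r_0)\big)=-N\,U_{\mu_Q}(0)$ to the $O(N)$‑term (using $V_1(r_1)-V_0(r_0)=q(r_1)-2\log r_1-q(r_0)=2U_{\mu_Q}(0)$ by \eqref{potential radially sym}) and $-\tfrac18\big(\log\Delta Q(r_1)-\log\Delta Q(r_0)\big)$ to the $O(1)$‑term; meanwhile $N\mapsto 2N$ halves both the Laplace correction, to $\tfrac12\int_0^1 a_1(\gamma_\tau)\,d\tau$, and the midpoint‑type correction of the leading part, to $\tfrac1{24}\log(r_0/r_1)$, and $\tfrac1{24}\log(r_0/r_1)+\tfrac12\int_0^1 a_1(\gamma_\tau)\,d\tau=\tfrac12 F_Q[\mathbb A_{r_0,r_1}]$, while $\log\tilde c_N$ supplies the Stirling part and upgrades $\tfrac{\log(2\pi)}2$ to $\tfrac{\log(4\pi)}2$ in the $O(N)$‑term; this yields \eqref{ZN expansion symp annulus}.

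The main obstacle is twofold. First, making the Laplace expansion genuinely uniform in $k$ up to the $O(N^{-2})$ remainder, in particular on the two marginal ranges $k=O(1)$ and $N-k=O(1)$, where $\gamma_{\tau_k}$ sits within $O(1/N)$ of $r_0$ or of $r_1$; this is exactly where $r_0>0$ is used decisively --- in the disc case the critical point collides with $t=0$, where $\log t$ is singular, and the analysis genuinely changes shape, which is why Theorem~\ref{Thm_ZN disc} looks different. Second, the bookkeeping of the $O(1)$‑term: one must express the higher derivatives $V_\tau^{(j)}(\gamma_\tau)$ through $\Delta Q$ and its radial derivatives, carry out the change of variables $\tau\leftrightarrow\gamma_\tau$, and integrate by parts so that $\int_0^1 a_1(\gamma_\tau)\,d\tau$ collapses onto the three explicit terms of \eqref{FQ annulus}; this step is routine but lengthy, and it is where all the pieces finally have to fit.
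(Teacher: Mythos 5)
Your proposal is correct and follows the paper's overall strategy: reduce $\log Z_N$ and $\log\widetilde{Z}_N$ to $\log N!$ plus a sum of logarithms of one--dimensional radial integrals, expand each summand by Laplace's method uniformly in the index (uniformity being exactly what $r_0>0$ buys), and resum by Euler--Maclaurin. The one genuine variation is in how you parametrize the Laplace phase: you absorb the radial Jacobian $2r\,dr$ into $V_\tau$ by shifting the abscissae to $\tau_k=(k+\tfrac12)/N$ for $h_k$ and $(k+\tfrac34)/N$ for $\widetilde{h}_{2k+1}$, so that the Euler--Maclaurin sum becomes the midpoint rule and the $\theta=\tfrac34$ offset rule, with $B_1(\tfrac12)=0$, $B_1(\tfrac34)=\tfrac14$, $B_2(\tfrac12)=-\tfrac1{12}$, $B_2(\tfrac34)=-\tfrac1{48}$. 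The paper instead keeps $\tau(j)=j/N$ and the Jacobian as an explicit prefactor, so an additional $\log r_{\tau(j)}$ appears in the Laplace expansion (Lemma~\ref{Lem_hj asymptotic}), its sum has to be tracked separately (Lemma~\ref{Sum:LapQ}), several $\log(r_1/r_0)$ boundary contributions must be shown to cancel in the $O(1)$-term, and the $U_{\mu_Q}(0)$ contribution for $\widetilde{Z}_N$ is extracted by comparing the full sum over $0\le j<2N$ with the even-index subsum (Lemmas~\ref{Sum:Vtau symplectic}--\ref{Sum:LapQ symplectic}) rather than from a $B_1$ boundary term. Your bookkeeping is lighter, but the two routes are arithmetic rearrangements of the same argument; I checked that your closing identity $\tfrac1{12}\log(r_0/r_1)+\int_0^1 a_1(\gamma_\tau)\,d\tau=F_Q[\mathbb{A}_{r_0,r_1}]$ is consistent with the paper's Lemma~\ref{Lem_B1 integral} once the discrepancy between your $a_1$ and the paper's $\mathfrak{B}_1$ (caused precisely by the different handling of the Jacobian) is accounted for.
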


Using the convention \eqref{partition another convention} together with \eqref{log N!}, our result can also be rewritten as 
\begin{equation} \label{ZN expansion cplx annulus v2}
\begin{split} 
\log \ZZ_N &=-N^2 I_Q[\mu_Q] - \frac{1}{2}N\log N + \Big( \frac{\log(2\pi)}{2}- \frac12 E_Q[\mu_Q] \Big) \, N +F_Q[\mathbb{A}_{r_0,r_1}] + O(N^{-1}).
\end{split}
\end{equation}
and 
\begin{equation}
\begin{split} \label{ZN expansion symp annulus v2}
 \log \wt{\ZZ}_N & = -2N^2 I_Q[\mu_Q] - \frac{1}{2}N\log N + \Big( \frac{ \log(4\pi) }{2} - U_{\mu_Q}(0) -\frac12 E_Q[\mu_Q] \Big) N 
 \\
 &\quad +\frac12 F_Q[\mathbb{A}_{r_0,r_1}]+\frac{1}{8} \log \Big( \frac{ \Delta Q(r_0) }{ \Delta Q(r_1) } \Big) +O(N^{-1}).
\end{split}
\end{equation}
We mention that these formulas \eqref{ZN expansion cplx annulus v2} and \eqref{ZN expansion symp annulus v2} as well as the formulas \eqref{ZN expansion cplx disc v2} and \eqref{ZN expansion symp disc v2} below are more convenient to compare with some asymptotic results in the previous literature \cite{charlier2021large,charlier2021asymptotics}.

Notice that the term $\log (r_1/r_0)$ is the extremal length of the annulus \eqref{droplet}, see e.g. \cite[p.142]{garnett2005harmonic}. 
It is worth pointing out that a characteristic difference between the expansions \eqref{ZN expansion cplx annulus} and \eqref{ZN expansion symp annulus} is the appearance of the logarithmic potential $U_{\mu_Q}(0)$ in the $O(N)$-term of \eqref{ZN expansion symp annulus}. 
This additional term can be interpreted as
\begin{equation} \label{renormalized energy}
- \int_S \log|w-\bar{w}| \,d\mu_Q(w)
\end{equation}
after a proper renormalization. 
The interpretation \eqref{renormalized energy} is natural from the perspective of the repulsion term $ |z_j-\bar{z}_j|^\beta $ in \eqref{Gibbs symplectic beta} and is closely related to the notion of the next-order energy, see e.g. \cite{leble2018fluctuations}. 
(We thank T. Lebl\'{e} for pointing out this.)

In Subsection~\ref{Subsec_ML} we present an example of Theorem~\ref{Thm_ZN annulus} for the Mittag-Leffler ensembles from which we expect that the error terms $O(N^{-1})$ are optimal.

\medskip 

For the disc droplet case, we have the following.

\begin{thm} \textbf{\textup{(Large-$N$ expansion of the partition functions: disc droplet case)}} \label{Thm_ZN disc}
Suppose that $r_0=0$, i.e. the droplet $S$ in \eqref{droplet} is a disc. 
Let 
\begin{equation}
\begin{split} \label{FQ disc}
 F_Q[\mathbb{D}_{r_1}] := \frac{1}{12} \log\Big( \frac{ 1 }{ r_1^2 \Delta Q(r_1) } \Big) 
 -\frac{1}{16} r_1 \frac{ (\d_r \Delta Q)(r_1) }{ \Delta Q(r_1) } +\frac{1}{24} \int_{0}^{r_1} \Big( \frac{ \d_r \Delta Q(r) }{ \Delta Q(r) }\Big)^2\,r \,dr. 
\end{split}
\end{equation}
Then as $N \to \infty,$ the following holds. 
\begin{enumerate}[label=(\roman*)]
  \item \textup{\textbf{(Random normal matrix ensemble)}} We have
\begin{align}
\begin{split} \label{ZN expansion cplx disc}
  \log Z_N 
  &= -N^2 I_Q[\mu_Q] + \frac{1}{2}N\log N + \Big( \frac{\log(2\pi)}{2}-1- \frac12 E_Q[\mu_Q] \Big) \, N + \frac{5}{12}\log N 
  \\
  &\quad + \frac{\log (2\pi)}{2} + \zeta'(-1)+ F_Q[\mathbb{D}_{r_1}] + O(N^{-\frac{1}{12}}(
\log N)^{3}).
\end{split}
\end{align}
  \item \textup{\textbf{(Planar symplectic ensemble)}} We have 
\begin{align}
\begin{split}
\label{ZN expansion symp disc}
 \log \wt{Z}_N & = -2N^2 I_Q[\mu_Q] +\frac{1}{2}N \log N + \Big( \frac{ \log(4\pi) }{2}-1 - U_{\mu_Q}(0) -\frac12 E_Q[\mu_Q] \Big) N+\frac{11}{24} \log N \\
&\quad + \frac{\log(2\pi)}{2}+\frac{1}{2}\zeta'(-1) +\frac12 F_Q[\mathbb{D}_{r_1}]+\frac{5}{24}\log2+ \frac{1}{8} \log \Big( \frac{ \Delta Q(0) }{ \Delta Q(r_1) } \Big) + O(N^{-\frac{1}{12}}
(\log N)^3).
\end{split}
\end{align}
\end{enumerate}
Here $\zeta$ is the Riemann zeta function.
\end{thm}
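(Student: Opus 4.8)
The plan is to exploit the exact solvability at $\beta=2$. Since $Q$ is radial, the monomials are orthogonal with respect to $e^{-NQ}\,dA$, so
\[
\log Z_N=\log N!+\sum_{k=0}^{N-1}\log h_{k,N},\qquad h_{k,N}:=2\int_0^\infty r^{2k+1}e^{-Nq(r)}\,dr ,
\]
while for the Pfaffian ensemble the rotation-invariant skew-orthogonal polynomials give the analogous identity
\[
\log\wt Z_N=\big(\text{explicit combinatorial prefactor}\big)+\sum_{k=0}^{N-1}\log\wt h_{k,N},\qquad \wt h_{k,N}:=2\int_0^\infty r^{4k+3}e^{-2Nq(r)}\,dr ,
\]
the prefactor being a $Q$-independent product of factorials whose large-$N$ expansion follows from Stirling's formula and the Barnes $G$-function. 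Everything then reduces to the large-$N$ asymptotics of the scalar moments $h_{k,N}$ and $\wt h_{k,N}$, uniformly over $k\in\{0,\dots,N-1\}$, followed by summation over $k$.

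The first step is the bulk. Writing the integrand of $h_{k,N}$ as $\exp((2k+1)\log r-Nq(r))$, the phase has a unique critical point $\rho_k$ with $\rho_k q'(\rho_k)=(2k+1)/N$ and second derivative $-4N\Delta Q(\rho_k)$ there. When $k/N$ stays in a compact subinterval of $(0,1)$, $\rho_k$ is bounded away from $0$ and $r_1$, and the classical Laplace expansion gives
\[
\log h_{k,N}=(2k+1)\log\rho_k-Nq(\rho_k)+\tfrac12\log\frac{2\pi}{N\,\Delta Q(\rho_k)}+\frac{c(\rho_k)}{N}+O(N^{-2}),
\]
with $c$ an explicit rational expression in $\rho_k$ and the derivatives of $q$ at $\rho_k$; the symplectic case is analogous, with phase $(4k+3)\log\rho_k-2Nq(\rho_k)$ and Gaussian factor $\tfrac12\log\frac{2\pi}{2N\,\Delta Q(\rho_k)}$. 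Summing over the bulk by the Euler--Maclaurin formula and using the substitution $r\mapsto t=\tfrac12 rq'(r)\in[0,1]$, under which $dt=2r\,\Delta Q(r)\,dr$ and $\int_0^1 g(\rho_t)\,dt=\int_S g\,d\mu_Q$, the leading integrals reproduce $\int Q\,d\mu_Q$, the entropy $E_Q[\mu_Q]$ of \eqref{entropy}, and---via \eqref{energy radially sym}---the energy $I_Q[\mu_Q]$; combined with the $N\log N$ produced against $\log N!$ (resp.\ the combinatorial prefactor) this yields the $O(N^2)$, $O(N\log N)$ and $O(N)$ coefficients. In the normal-matrix case the sampling points $(k+\tfrac12)/N$ are the midpoints of the partition of $[0,1]$, so the $O(1)$ Euler--Maclaurin boundary term of the phase sum vanishes and no $U_{\mu_Q}(0)$ appears; in the symplectic case the points $\tfrac2N(k+\tfrac34)$ are not midpoints, the phase sum carries an $O(N)$ boundary term, and by \eqref{potential radially sym} this term is exactly $-U_{\mu_Q}(0)\,N$, the new ingredient of \eqref{ZN expansion symp disc}. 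The remaining finite (i.e.\ $r_1$-endpoint) Euler--Maclaurin corrections, together with the integrated Laplace term $\int_0^1 c(\rho_t)\,dt$, assemble into $F_Q[\mathbb{D}_{r_1}]$ (and into $\tfrac12 F_Q[\mathbb{D}_{r_1}]+\tfrac18\log(\Delta Q(0)/\Delta Q(r_1))$ in the symplectic case).

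The second step concerns the origin, which is precisely where the disc differs from the annulus. For small $k$ the critical point $\rho_k\asymp\sqrt{k/N}$ collides with the endpoint $r=0$, Laplace's method fails, and the $r=0$-endpoint Euler--Maclaurin corrections and the integral $\int_0^1 c(\rho_t)\,dt$ from the first step actually diverge; instead one replaces $q(r)$ by its quadratic Taylor polynomial $q(0)+\tfrac12 q''(0)r^2$ (recall $q'(0)=0$ and $\Delta Q(0)=\tfrac12 q''(0)$), turning $h_{k,N}$ into a Gamma integral, $h_{k,N}\approx e^{-Nq(0)}\,k!\,(N\Delta Q(0))^{-(k+1)}$, and $\wt h_{k,N}\approx e^{-2Nq(0)}\,(2k+1)!\,(2N\Delta Q(0))^{-(2k+2)}$. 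Summing $\log k!$ over $0\le k<K$ gives $\log G(K+1)$, whose asymptotics supply the constant $\zeta'(-1)$ and, crucially, the fractional $-\tfrac1{12}\log N$ that turns the annular $\tfrac12\log N$ into the $\tfrac5{12}\log N$ of \eqref{ZN expansion cplx disc}; the corresponding odd-factorial sum has its own Barnes-$G$ asymptotics giving $\tfrac12\zeta'(-1)$, the $-\tfrac1{24}\log N$ correction, and the $\tfrac5{24}\log2$ term of \eqref{ZN expansion symp disc}. One then matches across a cutoff $k=K(N)\to\infty$: all $K$-dependence must cancel between the small-$k$ Barnes-$G$ sum and the bulk Euler--Maclaurin sum, and balancing the Taylor-remainder error $O(K^2/N)$ from the first regime against the Laplace/Euler--Maclaurin error from the second (a power of $\log N$ over a power of $K$) with $K(N)$ an appropriate power of $N$ produces the stated error $O(N^{-1/12}(\log N)^3)$.

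The main obstacle is this matching. In the annular case Laplace's method applies uniformly, the endpoint $r=0$ never enters, and one obtains the sharp $O(N^{-1})$; in the disc case one must interpolate between a genuinely different, Gamma/Barnes-$G$ behaviour for $k=O(1)$ and Laplace behaviour for $k\asymp N$, while keeping enough precision to pin down the exact constants $\zeta'(-1)$, $\tfrac5{12}$ (resp.\ $\tfrac{11}{24}$), $\tfrac5{24}\log2$, and $\tfrac18\log(\Delta Q(0)/\Delta Q(r_1))$. Technically this requires uniform-in-$k$ control of the Laplace remainder down to $\rho_k\asymp N^{-1/2}$ and of the quadratic-Taylor approximation up to $\rho_k\asymp 1$, and handling the overlap region precisely enough that all spurious $K$-dependence telescopes away---this is both the most delicate estimate and the reason the error term is weaker than in Theorem~\ref{Thm_ZN annulus}.
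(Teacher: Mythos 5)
Your proposal is correct and follows essentially the same strategy as the paper: split the orthogonal-norm sum at a cutoff $m_N=N^{\epsilon}$, approximate $h_k$ by a Gamma integral for $k<m_N$ (yielding Barnes $G$-function asymptotics and the $\zeta'(-1)$ constant), apply Laplace's method plus Euler--Maclaurin for $k\ge m_N$, verify that all $m_N$-dependence cancels, and pick $\epsilon=1/6$ to balance the two error regimes. The only difference is cosmetic: you absorb the radial factor $2r\,dr$ into the phase, placing the critical point at $\tau_k=(k+\tfrac12)/N$, whereas the paper keeps $V_{\tau(j)}$ at $\tau(j)=j/N$ with a separate $\log r_{\tau}$ amplitude term---both parametrizations produce the same cancellations and the same final expansion.
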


Again, using the convention \eqref{partition another convention}, we have
\begin{align}
\begin{split} \label{ZN expansion cplx disc v2}
  \log \ZZ_N
  &= -N^2 I_Q[\mu_Q] - \frac{1}{2}N\log N + \Big( \frac{\log(2\pi)}{2}- \frac12 E_Q[\mu_Q] \Big) \, N - \frac{1}{12}\log N 
  \\
   &\quad + \zeta'(-1)+ F_Q[\mathbb{D}_{r_1}] + O(N^{-\frac{1}{12}}( \log N)^{3}) \end{split} 
\end{align}
and 
\begin{align}
\begin{split}
\label{ZN expansion symp disc v2}
 \log \wt{\ZZ}_N & = -2N^2 I_Q[\mu_Q] - \frac{1}{2}N \log N + \Big( \frac{ \log(4\pi) }{2} - U_{\mu_Q}(0) -\frac12 E_Q[\mu_Q] \Big) N-\frac{1}{24} \log N \\ &\quad +\frac{1}{2}\zeta'(-1) +\frac12 F_Q[\mathbb{D}_{r_1}]+\frac{5}{24}\log2+ \frac{1}{8} \log \Big( \frac{ \Delta Q(0) }{ \Delta Q(r_1) } \Big) + O(N^{-\frac{1}{12}} (\log N)^3). 
\end{split} 
\end{align}

In Subsection~\ref{Subsec_truncated}, we provide an example of Theorem~\ref{Thm_ZN disc} for truncated unitary ensembles.
Contrary to Theorem~\ref{Thm_ZN annulus}, the error terms in Theorem~\ref{Thm_ZN disc} do not coincide with the expected optimal orders $O(N^{-1})$. 
Our error bounds originate from a decomposition of the analytic expressions of $Z_N,\wt{Z}_N$ (see Subsection~\ref{Subsec_outline}), which depends on sufficiently large but seemingly arbitrary number $m_N>0.$ 
(Such a decomposition was not necessary for the proof of Theorem~\ref{Thm_ZN annulus}.)
Later, we choose $m_N=N^{1/6}$ that gives rise to the control of the total error bounds presented in Theorem~\ref{Thm_ZN disc}. 
We mention that such error estimates also naturally appeared in similar computations, see e.g. \cite{byun2022characteristic,charlier2021large,charlier2021asymptotics}. 
Nevertheless, we expect that the estimates can be improved with more effort.

In terms of the function $\chi:= \frac12 \log \Lap Q$, one can rewrite \eqref{FQ disc} as 
\begin{equation} \label{FQ disc v2}
F_Q[\mathbb{D}_{r_1}] =\frac{1}{12}\log\Big( \frac{1}{r_1^2}\Big) -\frac{1}{12\pi} \oint_{ \partial S } \kappa \, \chi\,ds -\frac{1}{4} \int_S \Lap \chi \,dA + \frac{1}{12} \int_S | \nabla \chi |^2 \,dA,
\end{equation}
where $S=\mathbb{D}_{r_1}$ and $\kappa=1/r_1$ is the curvature of the boundary, see \cite[p.8960]{MR2240466} and \eqref{ZW F0 12 1}.
Here, the third term $\int_S \Lap \chi \,dA$ on the right-hand side of \eqref{FQ disc v2} is known as a ``zero mode'' of the loop operator (cf. \cite[Eq.(5.26)]{MR2240466}), whereas the fourth term corresponds to the Dirichlet energy of $\chi$.

We end this subsection by giving some crucial remarks on our theorems. 

\begin{rem}[\textbf{Comparison with Zabrodin-Wiegmann formula}]
We compare our formula \eqref{ZN expansion cplx disc} with the prediction by Zabrodin and Wiegmann. 
For $\beta=2$ and a radially symmetric $Q$ associated with a disc droplet of radius $r_1$, the asymptotic formula \eqref{ZW formula} is written in \cite[Eqs.(1.2),(C.7)]{MR2240466} as 
\begin{equation}
\begin{split} \label{ZW formula radial}
\log Z_N &=F_0 N^2 + F_{1/2} N +F_1 +c(N) +O(N^{-1}), \qquad \Big( c(N):=\log N! -\frac{1}{2}N \log N + \gamma N \Big)
\\
&= F_0 N^2+ \frac12 N \log N + \Big( \gamma-1+F_{1/2} \Big) N +\frac12 \log N + \frac{ \log(2\pi) }{ 2 } +F_1 +O(N^{-1}),
\end{split}
\end{equation}
where $\gamma$ is a ``numerical'' constant \cite[p.8938]{MR2240466} (that is not explicitly presented). 
(For reader's convenience, let us mention that in \cite{MR2240466}, the authors use a different convention for $\beta$ so that $\beta=2$ in our case corresponds to $\beta=1$ in \cite{MR2240466}. Furthermore, the Planck constant $\hbar$ in \cite{MR2240466} is identified as $1/N$.)
The coefficients $F_0, F_{1/2}$ and $F_1$ in \eqref{ZW formula radial} are given by
\begin{align}
F_0&:= \pi \int_0^{r_1^2} ( W_{ \textrm{rad} }(x)-W_{ \textrm{rad} }'(x) x \log x ) \sigma_{ \textrm{rad} }(x) \,dx, \label{F0 ZW}
\\
F_{1/2} &:= -\frac{\pi}{2} \int_0^{r_1^2} \sigma_{ \textrm{rad} }(x) \log (\pi \sigma_{ \textrm{rad} }(x))\,dx, \label{F12 ZW}
\\
F_1&:= \frac{1}{12} \log\Big( \frac{1}{r_1^2} \Big)- \frac{1}{6} \chi_{ \textrm{rad} }(r_1^2) - \frac14 r_1^2 \chi_{ \textrm{rad} }'(r_1^2)+\frac13 \int_0^{r_1^2} x (\chi_{ \textrm{rad} }'(x))^2\,dx, \label{F1 ZW}
\end{align}
where 
\begin{equation} \label{chi rad def}
W_{ \textrm{rad} }(r):= -Q(\sqrt{r}) , \qquad 
\sigma_{ \textrm{rad} }(r):=\frac{1}{\pi} \Lap Q(\sqrt{r}), \qquad 
\chi_{ \textrm{rad} }(r):= \frac12 \log \Lap Q(\sqrt{r}). 
\end{equation}
Using \eqref{chi rad def}, it is straightforward to check that the formulas \eqref{F0 ZW}, \eqref{F12 ZW} and \eqref{F1 ZW} can be identified as
\begin{equation} \label{ZW F0 12 1}
F_0=-I_Q[\mu_Q],\qquad F_{1/2}=-\frac12 E_Q[\mu_Q], \qquad F_1=F_Q[\mathbb{D}_{r_1}],
\end{equation}
where $I_Q$, $E_Q$ and $F_Q[\mathbb{D}_{r_1}]$ are given by \eqref{energy radially sym}, \eqref{entropy} and \eqref{FQ disc}.
(Cf. the identification of $F_0$ follows from the computation \eqref{sum Vtau EM 1} below.)
Then by letting $\gamma=\log(2\pi)/2$, one can deduce from \eqref{ZW F0 12 1} that the asymptotic formula \eqref{ZW formula radial} agrees with our result \eqref{ZN expansion cplx disc} up to the additive terms
\begin{equation} \label{ZW missing}
-\frac1{12} \log N +\zeta'(-1).
\end{equation}

We remark that the asymptotic expansion of the partition function of the complex Ginibre ensemble was presented in \cite{can2015exact,tellez1999exact}, where one can also observe the term $\zeta'(-1)$ in \eqref{ZW missing}.
Indeed, the term $\zeta'(-1)$ and its generalizations have appeared in similar situations in the Hermitian matrix theory, see \cite[Remark 1.3, Proposition 1.4]{MR4259377}, \cite[Theorem 1.1]{MR4043828}, \cite[Theorem 1.2]{MR4232544}. Interestingly, the coefficients of the $\zeta'(-1)$ term depend on the connectivity of the droplet (i.e. the number of disjoint intervals in this case) and the number of hard edges. 
\end{rem}

\begin{rem}[\textbf{Non-triviality of the limit $r_0 \to 0$}]
The formulas \eqref{ZN expansion cplx disc} and \eqref{ZN expansion symp disc} cannot be recovered by simply taking the limit $r_0 \to 0$ of \eqref{ZN expansion cplx annulus} and \eqref{ZN expansion symp annulus}. 
Namely, it is obvious that as $r_0 \to 0$, the terms
\begin{equation} \label{ZW missing annulus}
-\frac1{12} \log\Big( \frac{1}{r_0^2 \Lap Q(r_0) } \Big)+\frac{1}{16} r_0 \frac{ (\d_r \Delta Q)(r_0) }{ \Delta Q(r_0) } 
\end{equation}
do not correspond to the terms \eqref{ZW missing}. 
(One may however notice that for the standard microscale $r_0=O(1/\sqrt{N})$, at least the $-\frac1{12}\log N$ term in \eqref{ZW missing} follows.)

From the viewpoint of the proof, the origin of \eqref{ZW missing} and \eqref{ZW missing annulus} is essentially similar in the sense that these terms arise from the asymptotic behaviors of the summand in \eqref{ZN random normal symplectic} of \emph{lower degrees}.
Nevertheless, it is essential (but seems not discussed in \cite{MR2240466}) that these asymptotic behaviors depend on whether the droplet is contractible or not, i.e. for the radially symmetric potentials, disc or annulus. 
We remark that the contractible case requires considerably more analysis than the other case, see the following subsection for more discussion.
\end{rem}

\begin{rem}[\textbf{Invariance of the $O(1)$-terms under the dilation}]
For $a>0$, let $Q_a(z):=Q(z/a)$. Then the droplet associated with $Q_a$ is given by $\{ z\in \C: ar_0 \le |z| \le ar_1 \}$, where $r_0$ and $r_1$ are given in \eqref{droplet}. 
Then it follows from \eqref{FQ annulus} and \eqref{FQ disc} that
\begin{equation}\label{FQ invariance dilation}
F_Q[\mathbb{A}_{r_0,r_1}]=F_{Q_a}[\mathbb{A}_{ar_0,ar_1}], \qquad F_Q[\mathbb{D}_{r_1}]= F_{Q_a}[\mathbb{D}_{ar_1}]. 
\end{equation}
This in turn means that the $O(1)$-terms in the expansions in Theorems~\ref{Thm_ZN annulus} and \ref{Thm_ZN disc} are invariant under the dilation $\{ z_j \} \mapsto \{a\cdot z_j \}$.
The property \eqref{FQ invariance dilation} can be expected from the analytic expression \eqref{ZN random normal symplectic} below. 
\end{rem}

\begin{rem}[\textbf{Weight function with singularities and classical problems in random matrix theory}]
In Theorems~\ref{Thm_ZN annulus} and \ref{Thm_ZN disc}, we focus on the weight function $e^{-NQ}$ without any kind of singularities. 
In contrast, if a specific singularity is allowed for the weight function, the problems of deriving asymptotic expansions of the associated partition function are (when combined with Theorems~\ref{Thm_ZN annulus} and \ref{Thm_ZN disc}) equivalent to several classical problems in random matrix theory.

To be more concrete, we list various problems in this direction. 
If the weight function has a \emph{hard-edge} inside the droplet, the associated partition function provides the large gap (hole) probability, see \cite{adhikari2018hole,AR2016hole,akemann2009gap,byun2022almost,charlier2021large,MR1181356,ghosh2018point,jancovici1993large} and references therein.
The weight function with a \emph{jump-type singularity} gives rise to the moment generating function of the disc counting function. It has been extensively studied in recent years \cite{akemann2022universality,ameur2022exponential,byun2022characteristic,charles2020entanglement,charlier2021asymptotics,charlier2022exponential}. 
(We also refer to \cite{fenzl2022precise,lacroix2019intermediate,lacroix2019rotating,smith2021counting}
for physical motivations of these problems from the counting statistics of rotating free fermions.) 
Finally, a \emph{root-type singularity} arises in the study of the log-characteristic polynomials \cite{byun2022characteristic,deano2019characteristic,MR3946715}. 

We stress that the literature mentioned above is limited mainly to a particular model, such as the Ginibre ensemble, when deriving precise asymptotic results or to the leading order asymptotic when considering general potentials.
We expect that Theorems~\ref{Thm_ZN annulus} and \ref{Thm_ZN disc} provide the building blocks for obtaining precise asymptotic results on the problems mentioned above with general radially symmetric potentials. 
\end{rem}

\begin{rem}[\textbf{Planar point processes with a general external potential $Q$}]
For a general potential $Q$ beyond a radially symmetric one, the asymptotic behaviors of planar orthogonal polynomials (of sufficiently large degrees) with respect to $e^{-NQ}\,dA$ were recently obtained in \cite{hedenmalm2017planar}.
We expect that this will be helpful to extend Theorem~\ref{Thm_ZN annulus} (i) to a general potential $Q$ associated with a ``non-contractible'' droplet. 
On the other hand, for the extension of Theorem~\ref{Thm_ZN disc} (i), it is required to derive asymptotics of orthogonal polynomials of lower degrees as well.

Such a generalization of Theorems~\ref{Thm_ZN annulus} and \ref{Thm_ZN disc} (ii) for planar symplectic ensembles seems at present far from being solved. 
More precisely, in order to obtain an analytic expression of $\wt{Z}_N$, it is required to construct the associated skew-orthogonal polynomial. 
However, for a non-radially symmetric potential, this construction has been known only in a few special cases \cite{MR2180006, MR1928853} (cf. see \cite{akemann2021skew} for a possible generality).
\end{rem}

\subsection{Outline of the proof} \label{Subsec_outline}
In this subsection, we outline the proofs of our main results.
Using the determinantal (resp., Pfaffian) structure and de Bruijn's type formulas, one can express $Z_N$ (resp., $\wt{Z}_N$) in terms of the (skew-)orthogonal norms.
Consequently, since $Q$ is radially symmetric, we find
\begin{equation} \label{ZN random normal symplectic}
\log Z_N=\log N!+ \sum_{j=0}^{N-1} \log h_j, \qquad \log \wt{Z}_N= \log N! + \sum_{j=0}^{N-1} \log (2\widetilde{h}_{2j+1}) ,
\end{equation}
where
\begin{equation} \label{hj hj tilde def}
h_j:= \int_\C |z|^{2j}\,e^{-N Q(z)}\,dA(z), \qquad \widetilde{h}_j:= \int_\C |z|^{2j}\,e^{-2N Q(z)}\,dA(z).
\end{equation}
These formulas can be found for instance in \cite[Lemma 1.7]{charlier2021asymptotics} and \cite[Remark 2.5]{akemann2021skew}. 
In particular, for planar symplectic ensembles, we have used the explicit construction of skew-orthogonal polynomials associated with radially symmetric potentials, see \cite[Corollary 3.3]{akemann2021skew}. 

In order to obtain the large-$N$ expansions of partition functions up to the $O(1)$-terms, we need to derive asymptotic behaviors of $h_j$ and $\widetilde{h}_j$ up to the first subleading terms, for which we apply Laplace's method. 
For this purpose, let $r_\tau$ be a unique number $r_\tau$ such that $r_\tau q'(r_\tau)=2\tau$ for $0\le \tau \le 1$.
Such a function $\tau \mapsto r_\tau$ plays an important role in Laplace's method, and we defer more explanations to Subsection~\ref{Subsection_norms annulus}.
In the asymptotic expansions of $h_j$ and $\widetilde{h}_j$, one should distinguish the following two cases depending on a small constant $\varepsilon>0$. 
\begin{itemize}
  \item \textbf{Case 1: $r_{j/N} \gg N^{-\varepsilon}$.} For the annular droplet case where $r_0>0$, this case covers all $j=0,1,\dots,N-1$ (Lemma~\ref{Lem_hj asymptotic}). 
  On the other hand, for the disc droplet case where $r_0=0$, this case covers only $j=m_N,m_N+1,\dots,N-1$ for $m_N=N^{\epsilon}$ with some $\epsilon>0$ (Lemma~\ref{Lem_hj higher}). 
  \smallskip 
  \item \textbf{Case 2: $r_{j/N} \ll N^{-\varepsilon}$.} This covers the remaining disc droplet case with $j=0,1,\dots,m_N-1$ (Lemma~\ref{Lem_hj lower}). Notably, the asymptotic expansion involves gamma functions in this case. 
\end{itemize}

Furthermore, we apply the Euler-Maclaurin formula (see e.g. \cite[Section 2.10]{olver2010nist}) to precisely analyze the summations in \eqref{ZN random normal symplectic}
\begin{equation}
\begin{split} \label{EMF}
\sum_{j=m}^{n} f(j) = \int_{m}^{n} f(x) \, dx + \frac{f(m)+f(n)}{2} + \sum_{k=1}^{l} \frac{B_{2k}}{(2k)!}\Big(f^{(2k-1)}(n)-f^{(2k-1)}(m)\Big) + R_l,
\end{split}
\end{equation}
where $B_k$ is the Bernoulli number defined by 
\begin{equation} \label{Bernoulli number}
\frac{t}{e^t-1}=\sum_{n=0}^\infty B_n \frac{t^n}{n!}
\end{equation}
and $R_l$ is an error term with the following estimate 
\[
|R_l| \le \frac{2\zeta(l)}{(2\pi)^l} \int_m^n|f^{(l)}(x)|\,dx.
\]
Here $\zeta$ is the Riemann zeta function.
In particular, for the disc droplet case, in the summation of lower degrees $j=0,1,\dots, m_N-1$, we consider the Barnes $G$-function \cite[Section 5.17]{olver2010nist}
\begin{equation*}
  G(z+1) = (2\pi)^{z/2} e^{-(z+z^2(1+\gamma))/2} \prod_{n=0}^\infty \big(1+\frac z n\big)^n e^{-z+z^2/(2n)}
\end{equation*}
and use its asymptotic expansion \cite[Eq.(5.17.5)]{olver2010nist}: 
\begin{align}
\begin{split} \label{Barnes G asymp}
\log G(z+1) =\frac{z^2 \log z}{2} -\frac34 z^2+\frac{ \log(2\pi) z}{2}-\frac{\log z}{12}+\zeta'(-1)+O( \frac{1}{z^2} ), \qquad (z \to \infty).
\end{split}
\end{align}
This asymptotic expansion leads to the appearance of the Riemann zeta function in Theorem~\ref{Thm_ZN disc}.

\subsection*{Plan of the paper} The rest of this paper is organized as follows. 
In Section~\ref{Section_annulus}, we prove Theorem~\ref{Thm_ZN annulus}.
Subsection~\ref{Subsection_norms annulus} is devoted to deriving asymptotic behaviors of $h_j$ and $\widetilde{h}_j$ using Laplace's method. 
Then we show Theorem~\ref{Thm_ZN annulus} (i) on random normal matrices in Subsection~\ref{Subsection_annulus normal} and Theorem~\ref{Thm_ZN annulus} (ii) on planar symplectic ensembles in Subsection~\ref{Subsection_annulus symplectic}.
Section~\ref{Section_disc} is structured in parallel with a goal to show Theorem~\ref{Thm_ZN disc} albeit it requires considerably more computations compared to those in Section~\ref{Section_annulus}.
In Section~\ref{Appendix_Examples}, we present examples of Theorems~\ref{Thm_ZN annulus} and \ref{Thm_ZN disc} for the Mittag-Leffler and truncated unitary ensembles whose partition functions can be explicitly expressed in terms of well-known special functions.

\section{Proof of Theorem~\ref{Thm_ZN annulus}} \label{Section_annulus}

In this section, we prove Theorem~\ref{Thm_ZN annulus}. 
Throughout this section, we assume that $r_0>0.$

\subsection{Asymptotics of the orthogonal norm} \label{Subsection_norms annulus}
We first introduce an auxiliary function $V_\tau$ in $(0,\infty)$ 
\begin{equation} \label{V tau}
  V_\tau(r) := q(r) - 2\tau \log r.
\end{equation}
With the following choices of $\tau = \tau(j), \wt\tau(j)$
\begin{equation} \label{tau j wt tau j}
\tau(j):=\frac{j}{N}, \qquad \wt{\tau}(j):=\frac{j}{2N},
\end{equation}
the integrands in $h_j, \wt{h}_j$ \eqref{hj hj tilde def} can be expressed in terms of $V_\tau:$
$$r^{2j} e^{-N q(r)} = e^{-N V_{\tau(j)}(r)}, \qquad r^{2j} e^{-2N q(r)} = e^{-N V_{\wt{\tau}(j)}(r)}.$$
For a radially symmetric potential $Q$, we represent $\Delta Q$ in terms of $q$ as 
\begin{equation} \label{Delta Q q' q''}
4\Delta Q(z)|_{z=r}= \frac{1}{r}(rq'(r))'= \frac{q'(r)}{r}+q''(r) . 
\end{equation}
Differentiating \eqref{V tau}, we have 
\begin{align}
\begin{split} \label{V tau diff}
V_\tau'(r)=q'(r)-\frac{2\tau}{r}, \qquad V_\tau''(r) &= 4\Delta Q(r)-\frac{1}{r} V_\tau'(r), \qquad 
V_\tau^{(3)}(r) = 4 \d_r \Delta Q(r)-\frac{4}{r} \Delta Q(r) +\frac{2}{r^2} V_\tau'(r), \\
V_\tau^{(4)}(r) &= 4 \d_r^2 \Delta Q(r)+\frac{12}{r^2} \Delta Q(r)-\frac{4}{r} \d_r \Delta Q(r) -\frac{6}{r^3} V_\tau'(r).
\end{split}
\end{align}

We now set the stage to apply Laplace's method.
Since $rq'(r)$ is strictly increasing inside the droplet, for $0 \leq \tau \leq 1$, there exists a unique number $r(\tau)$ such that 
\begin{equation} \label{r tau min}
V_\tau'(r(\tau))=0 , \qquad V_\tau''(r(\tau))>0.
\end{equation}
Moreover, by \eqref{Delta Q q' q''} and the relation \eqref{r tau min}, it follows that 
\begin{equation}\label{drdtau}
  \frac{dr(\tau)}{d\tau} = \frac{2}{(rq'(r))'}\Big|_{r=r(\tau)} = \frac{1}{2r(\tau)\Lap Q (r(\tau))}>0.
\end{equation}
Thus $r(\tau)$ is an increasing function of $\tau$. 
On the other hand, $r(1) = r_1, r(0) = r_0$, where $r_0,r_1$ are given in \eqref{droplet}. Therefore, we denote $r_\tau = r(\tau)$ making the notation consistent with \eqref{droplet}. 
We also mention here that $r_\tau$ corresponds to the outer radius of the so-called ``$\tau$-droplet'' \cite{hedenmalm2017planar}. 
By \eqref{V tau diff} and \eqref{r tau min}, $r_\tau$ satisfies
\begin{equation} \label{r tau eq}
r_\tau q'(r_\tau)=2\tau.
\end{equation}
In particular, $q'(r_0)=0$ and $r_1q'(r_1)=2.$

Let
\begin{equation} \label{B1}
\mathfrak{B}_1(r):= -\frac{1}{32} \frac{ \d_r^2 \Delta Q(r) }{ (\Delta Q(r))^2 } -\frac{19}{96r} \frac{\d_r \Delta Q(r)}{ (\Delta Q(r))^2 }+\frac{5}{96} \frac{ (\d_r \Delta Q(r))^2 }{ \Delta Q(r)^3 } +\frac{1}{12r^2} \frac{1}{\Delta Q(r)}.
\end{equation}
Here, the subscript $1$ is added to $\mathfrak{B}$ to emphasize that this function appears as the first subleading term of the asymptotic expansion of orthonormal polynomials, see Lemma~\ref{Lem_hj asymptotic} below.
Indeed, function $\mathfrak{B}_1$ is closely related to function $\mathfrak{B}_{\tau,1}$ in \cite[Theorem 1.3]{hedenmalm2017planar}. 

\begin{lem}\label{Lem_hj asymptotic} 
As $N\to\infty$, the following holds.
\begin{itemize}
  \item For each $j$ with $0\leq j \leq N-1$, 
\begin{equation}
  h_j = N^{-\frac{1}{2}}e^{-NV_{\tau(j)}(r_{\tau(j)})} \Big(\frac{{2\pi}r_{\tau(j)}^2}{\Lap Q (r_{\tau(j)})}\Big)^{\frac{1}{2}} \Big( 1 + \frac{1}{N}\mathfrak{B}_1(r_{\tau(j)}) + O(N^{-2})\Big).
\end{equation}
\item For each $j$ with $0\leq j \leq 2N-1$, 
\begin{equation}
  \widetilde{h}_j = (2N)^{-\frac{1}{2}}e^{-2NV_{\widetilde{\tau}(j)}(r_{\wt{\tau}(j)})} \Big(\frac{{2\pi}r_{\wt{\tau}(j)}^2}{\Lap Q (r_{\wt{\tau}(j)})}\Big)^{\frac{1}{2}} \Big( 1 + \frac{1}{2N}\mathfrak{B}_1(r_{\wt{\tau}(j)}) + O(N^{-2})\Big).
\end{equation}
\end{itemize}
\end{lem}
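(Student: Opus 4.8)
The plan is to prove both asymptotic expansions by a single application of Laplace's method to the integrals $h_j = \int_\C |z|^{2j} e^{-NQ(z)}\,dA(z)$ and $\widetilde h_j = \int_\C |z|^{2j} e^{-2NQ(z)}\,dA(z)$, exploiting the radial symmetry to reduce to one-dimensional integrals and then expanding around the critical point $r_\tau$. Since $\widetilde h_j$ with potential $2NQ$ and parameter $j$ is, up to relabeling, the same as $h_j$ with potential $NQ$ and the half-index $\widetilde\tau(j) = j/(2N)$ in place of $\tau(j)=j/N$ (more precisely, $|z|^{2j}e^{-2NQ} = e^{-N V_{\widetilde\tau(j)}(|z|)}$ with the leading constant $N$ replaced by $2N$), the second bullet follows from the first by the substitution $N \mapsto 2N$, $\tau \mapsto \widetilde\tau(j)$. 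So I will only carry out the computation for $h_j$.

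First I would pass to polar coordinates: $h_j = 2\int_0^\infty r^{2j+1} e^{-Nq(r)}\,dr = 2\int_0^\infty \frac{1}{r}\,e^{-N V_{\tau(j)}(r)}\,r^2\,dr$, writing $r^{2j}e^{-Nq(r)} = e^{-N V_\tau(r)}$ with $\tau = \tau(j)$ and $V_\tau$ as in \eqref{V tau}. By \eqref{r tau min}--\eqref{r tau eq}, $V_\tau$ has a unique nondegenerate minimum on $(0,\infty)$ at $r = r_\tau$, with $V_\tau''(r_\tau) = 4\Lap Q(r_\tau) > 0$ by \eqref{V tau diff} (the strict subharmonicity near the droplet guarantees positivity; for $\tau$ bounded away from $[0,1]$ one uses the global growth/subharmonicity hypotheses and the fact that the hypotheses force $r_\tau$ to stay in a compact region where $\Lap Q$ is bounded below — here I would record the needed uniformity in $j$, i.e.\ in $\tau \in [0, (N-1)/N]$, since the $O(N^{-2})$ error must be uniform). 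The key point is that since $r_0 > 0$ in this section, $r_\tau$ stays bounded away from $0$ uniformly in $\tau \in [0,1]$, so all the $1/r$, $1/r^2$, $1/r^3$ factors appearing in \eqref{V tau diff} and in $\mathfrak{B}_1$ stay bounded — this is exactly what makes the annulus case cleaner than the disc case and is where the hypothesis $r_0>0$ enters.

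Next comes the core Laplace expansion. I would use the standard full Watson-type expansion: for a function $g$ smooth near $r_\tau$ and $\phi = V_\tau$ with $\phi'(r_\tau)=0$, $\phi''(r_\tau)>0$,
\[
\int g(r) e^{-N\phi(r)}\,dr = e^{-N\phi(r_\tau)} \sqrt{\frac{2\pi}{N\phi''(r_\tau)}}\Big( g(r_\tau) + \frac{1}{N}\, c_1 + O(N^{-2})\Big),
\]
where $c_1$ is the usual explicit combination of $g(r_\tau), g'(r_\tau), g''(r_\tau)$ and $\phi''(r_\tau), \phi'''(r_\tau), \phi^{(4)}(r_\tau)$ — concretely, $c_1 = \frac{g''}{2\phi''} - \frac{g'\phi'''}{2(\phi'')^2} + g\big(\frac{5(\phi''')^2}{24(\phi'')^3} - \frac{\phi^{(4)}}{8(\phi'')^2}\big)$, all evaluated at $r_\tau$. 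Here $g(r) = 2r$ (from $2\int_0^\infty r^{2j+1}e^{-Nq}\,dr$, so $g(r) = 2r$ after writing $r^{2j+1}e^{-Nq} = r\cdot e^{-NV_\tau}$). Plugging $\phi'' = 4\Lap Q$, $\phi''' = V_\tau^{(3)}$, $\phi^{(4)} = V_\tau^{(4)}$ from \eqref{V tau diff}, with $V_\tau'(r_\tau) = 0$ so the $V_\tau'$-terms drop out, gives $\phi'''(r_\tau) = 4\partial_r\Lap Q(r_\tau) - \frac{4}{r_\tau}\Lap Q(r_\tau)$ and $\phi^{(4)}(r_\tau) = 4\partial_r^2\Lap Q(r_\tau) + \frac{12}{r_\tau^2}\Lap Q(r_\tau) - \frac{4}{r_\tau}\partial_r\Lap Q(r_\tau)$. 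The leading term becomes $e^{-NV_\tau(r_\tau)}\sqrt{2\pi/(4N\Lap Q(r_\tau))}\cdot 2r_\tau = N^{-1/2}e^{-NV_\tau(r_\tau)}(2\pi r_\tau^2/\Lap Q(r_\tau))^{1/2}$, matching the claim. Then I would verify by direct substitution and simplification that $c_1/g(r_\tau) = \mathfrak{B}_1(r_\tau)$ as defined in \eqref{B1} — this is a bookkeeping computation in which the $1/r_\tau^2\,\Lap Q$ term in $\mathfrak{B}_1$ comes from the $g''=0$ but $g'=2$ interacting with $\phi'''$, together with the $\phi^{(4)}$ and $(\phi''')^2$ contributions.

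The main obstacle is twofold: establishing uniformity of the $O(N^{-2})$ remainder over all $j \in \{0,\dots,N-1\}$ (equivalently $\tau \in [0,1)$), since the later Euler–Maclaurin summation in \eqref{EMF} requires this uniform control, and correctly simplifying the Watson coefficient $c_1$ into the exact form \eqref{B1}. For the uniformity I would split the integral into a neighborhood $|r - r_\tau| \le \delta$ of the critical point, where a Taylor expansion of $\phi$ to order four plus control of the fifth derivative gives the expansion with uniform constants (using that $r_\tau$ ranges over a fixed compact subinterval of $(0,\infty)$ and $\Lap Q$ is bounded below there), and the complementary region $|r - r_\tau| > \delta$, where the growth hypothesis \eqref{Q growth condition} and strict convexity of $V_\tau$ inside the droplet together with subharmonicity outside give an exponentially small bound $e^{-N V_\tau(r_\tau) - cN}$ uniformly in $\tau$. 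I would also note that $V_\tau(r_\tau)$ is bounded (indeed real-analytic in $\tau$), so these error estimates are genuinely of relative size $O(N^{-2})$. The algebraic identity $c_1/(2r_\tau) = \mathfrak{B}_1(r_\tau)$ is then a finite computation that I would carry out by substituting the expressions for $\phi'', \phi''', \phi^{(4)}$ above and collecting terms; I expect it to reproduce exactly the four terms of \eqref{B1}, and the remark in the text that $\mathfrak{B}_1$ coincides with $\mathfrak{B}_{\tau,1}$ of \cite{hedenmalm2017planar} serves as an independent check.
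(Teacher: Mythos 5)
Your proposal is correct and follows essentially the same path as the paper's proof: reduce the second bullet to the first by the substitution $N \mapsto 2N$, pass to polar coordinates so that $h_j = 2\int_0^\infty r\,e^{-NV_{\tau(j)}(r)}\,dr$, split the integral into a shrinking neighborhood of the critical point $r_{\tau(j)}$ and an exponentially negligible complement (using the growth condition to control the tail), and then carry out the Laplace/Watson expansion near $r_{\tau(j)}$ by Taylor-expanding $V_\tau$ to fourth order and evaluating Gaussian moments. The only cosmetic difference is that you invoke the standard closed form for the first Watson coefficient $c_1$ and then check $c_1/(2r_\tau) = \mathfrak{B}_1(r_\tau)$ by substitution (which indeed works out to the four terms of \eqref{B1}), whereas the paper re-derives that coefficient in place by expanding the exponential and using the Gaussian moment formulas; these are the same computation. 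One small caution for a full write-up: to get the stated $O(N^{-2})$ remainder uniformly in $j$, the cutoff radius should be taken to shrink (the paper uses $\delta_N = N^{-1/2}\log N$), since after rescaling $r \mapsto r_{\tau(j)} + s/\sqrt{N}$ a fixed $\delta$ would let the higher-order Taylor terms grow; your phrase ``control of the fifth derivative'' gestures at this but it is worth making the shrinking window explicit.
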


\begin{proof}
It suffices to show the first assertion as the second one follows by replacing $N$ with $2N.$ 

Write $\delta_N:= \log N/\sqrt{N}$. 
As seen in \eqref{V tau}, \eqref{r tau min} and \eqref{drdtau}, the function $V_{\tau}$ has a global minimum at $r=r_{\tau}$ and 
$r_{\tau(j+\frac{1}{2})}-r_{\tau(j)}=O(N^{-1})$.
We have
\begin{equation*}
  V_{\tau(j+\frac{1}{2})}(r) \geq V_{\tau(j+\frac{1}{2})}(r_{\tau(j)}+\delta_N) = V_{\tau(j+\frac{1}{2})}(r_{
\tau(j+\frac{1}{2})}) + \frac{1}{2}V_{\tau(j+\frac{1}{2})}''(r_{\tau(j+\frac{1}{2})})(r_{\tau(j)}+\delta_N - r_{\tau(j+\frac{1}{2})})^2 + O(\delta_N^3)
\end{equation*}
for all $r$ with $r>r_{\tau(j)}+\delta_N$ since $rq'(r)$ is increasing in $(0,\infty)$. A similar estimate holds for $r<r_{\tau(j)}-\delta_N$. 
Thus we deduce from the estimate
\begin{equation}
V_{\tau(j+\frac{1}{2})}(r_{\tau(j+\frac{1}{2})}) - V_{\tau(j)}(r_{\tau(j)}) = O(N^{-1})  
\end{equation}
that there exists a positive number $c>0$ such that for all $|r-r_{\tau(j)}|>\delta_N$ 
\begin{align}\label{Vtau:diff}
  V_{\tau(j+\frac{1}{2})}(r) - V_{\tau(j)}(r_{\tau(j)}) \geq c\,\delta_N^2. 
\end{align}

We split $h_j$ in \eqref{hj hj tilde def} into two integrals 
\begin{align*}
  h_j = \int_{0}^\infty e^{-NV_{\tau(j)}(r)} 2r\, dr = \int_{|r-r_{\tau(j)}|<\delta_N} 
  e^{-NV_{\tau(j)}(r)} 2r \, dr
  + \int_{|r-r_{\tau(j)}| > \delta_N} e^{-NV_{\tau(j)}(r)} 2r \, dr.
\end{align*}
Using \eqref{Q growth condition}, we choose sufficiently large $M>0$ such that \begin{equation} \label{M choose}
\int_0^\infty \Big(r^2 e^{-q(r)} \Big)^M \,r \,dr  < +\infty.  
\end{equation} 
We then use \eqref{Vtau:diff} and \eqref{M choose} to find an error estimate for the second integral
\begin{align}
\begin{split} \label{ex_error}
&\quad \int_{|r-r_{\tau(j)}| > \delta_N}  e^{-N V_{\tau(j)}(r) }2r\, dr  
  = e^{-NV_{\tau(j)}(r_{\tau(j)})}
  \int_{|r-r_{\tau(j)}| > \delta_N} e^{-N \big(V_{\tau(j)}(r) - V_{\tau(j)}(r_{\tau(j)})  \big)} \,2r\,  dr
  \\ 
  &\leq e^{-NV_{\tau(j)}(r_{\tau(j)})}e^{-c(N-M)\delta_N^2}
  \int_{|r-r_{\tau(j)}| > \delta_N} e^{-M \big(V_{\tau(j)}(r) - V_{\tau(j)}(r_{\tau(j)})\big)} \,2r \, dr
  = e^{-NV_{\tau(j)}(r_{\tau(j)})} \epsilon_{N}, 
\end{split}
\end{align}
where $\epsilon_{N} = O(e^{-c(\log N)^2})$ for some $c>0$ and the $O$-constants are bounded uniformly for all $\tau$ with $0\leq \tau \leq 1$.
We deduce from the asymptotic expansion of $V_\tau(r)$ near the critical point $r_\tau$ and \eqref{ex_error} that 
\begin{align*}
   h_j =   e^{-NV_{\tau(j)}(r_{\tau(j)})} \Big(\int_{-\delta_N}^{\delta_N}  e^{-2N\Lap Q(r_{\tau(j)})r^2}e^{-N(\frac{1}{3!}V_{\tau(j)}^{(3)}(r_{\tau(j)})r^3 + \frac{1}{4!}V_{\tau(j)}^{(4)}(r_{\tau(j)})r^4 + O(r^5)) } 2(r_{\tau(j)}+r) \,dr + \epsilon_N\Big).
\end{align*}

A change of variables gives that 
\begin{equation}
\begin{split} \label{hj asymptotic 1}
 & e^{NV_{\tau(j)}(r_{\tau(j)})} h_j \\
 =& \frac{2\,r_{\tau(j)}}{\sqrt{N}} 
\int_{-\sqrt{N}\delta_N}^{\sqrt{N}\delta_N} e^{-2\Lap Q(r_{\tau(j)})r^2-\frac{1}{\sqrt{N}}\frac{1}{3!}V_{\tau(j)}^{(3)}(r_{\tau(j)})r^3 - \frac{1}{N}\frac{1}{4!}V_{\tau(j)}^{(4)}(r_{\tau(j)})r^4 + O(N^{-\frac{3}{2}}r^5) } \big(1+\frac{r}{r_{\tau(j)}\sqrt{N}}\big) \,dr + \epsilon_N.
\end{split}
\end{equation}
Using the Taylor series expansion of the function
\begin{align*}
  e^{-\frac{1}{\sqrt{N}}\frac{1}{3!}V_{\tau(j)}^{(3)}(r_{\tau(j)})r^3 - \frac{1}{N}\frac{1}{4!}V_{\tau(j)}^{(4)}(r_{\tau(j)})r^4 },
\end{align*}
we have the asymptotic expansion
\begin{equation}
  \begin{split}
    & \frac{\sqrt{N}}{2r_{\tau(j)}}e^{NV_{\tau(j)}(r_{\tau(j)})}h_j \\
    =& \int_{-\infty}^{\infty} e^{-2\Lap Q(r_{\tau(j)})r^2}\Big[1 - \frac{1}{N}\Big(\frac{1}{6r_{\tau(j)}} V_{\tau(j)}^{(3)}(r_{\tau(j)})+\frac{1}{24}V_{\tau(j)}^{(4)}(r_{\tau(j)}) \Big)r^{4} + \frac{1}{N}\frac{1}{72}(V_{\tau(j)}^{(3)}(r_{\tau(j)}))^2 r^6\Big] \, dr + O(N^{-2})
  \end{split}
\end{equation}
since the odd terms vanish in the integral, leaving only the even terms.

Combining \eqref{V tau diff} with the elementary Gaussian integrals
\begin{equation}
\int_\R e^{-2ar^2}\,dr=\sqrt{\frac{\pi}{2}} \frac{1}{a^{1/2}} , \qquad \int_\R e^{-2ar^2}r^4\,dr=\sqrt{\frac{\pi}{2}} \frac{3}{16} \frac{1}{a^{5/2}} , \qquad \int_\R e^{-2ar^2}r^6\,dr=\sqrt{\frac{\pi}{2}} \frac{15}{64} \frac{1}{a^{7/2}},
\end{equation}
we obtain the desired asymptotic behavior after some straightforward computations. 
\end{proof}

The following elementary integration will be helpful later. 

\begin{lem} \label{Lem_B1 integral}
We have 
\begin{equation}
\begin{split}
\int_{S} \mathfrak{B}_1 \, d\mu_Q =F_Q[\mathbb{A}_{r_0,r_1}] -\frac{1}{4} \log \Big( \frac{ \Delta Q(r_1) }{ \Delta Q(r_0) } \Big) + \frac{1}{3} \log\Big( \frac{r_1}{r_0} \Big) , 
\end{split}
\end{equation}
where $F_Q[\mathbb{A}_{r_0,r_1}]$ is given in \eqref{FQ annulus}. 
\end{lem}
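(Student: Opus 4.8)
The plan is to establish the identity in Lemma~\ref{Lem_B1 integral} by a direct computation: express $\int_S \mathfrak{B}_1\,d\mu_Q$ using the radial coordinates, and then match it term-by-term against the definition \eqref{FQ annulus} of $F_Q[\mathbb{A}_{r_0,r_1}]$ after accounting for the two correction terms $-\tfrac14\log(\Delta Q(r_1)/\Delta Q(r_0))$ and $\tfrac13\log(r_1/r_0)$. By \eqref{eq msr Lap Q} and \eqref{droplet}, we have $d\mu_Q = \Delta Q\cdot\mathbbm{1}_S\,dA$, so in polar coordinates
\[
\int_S \mathfrak{B}_1\,d\mu_Q = \int_{r_0}^{r_1} \mathfrak{B}_1(r)\,\Delta Q(r)\, 2r\,dr.
\]
Substituting the definition \eqref{B1} of $\mathfrak{B}_1$, this splits into four integrals, with integrands proportional to $\dfrac{\d_r^2\Delta Q(r)}{\Delta Q(r)}\,r$, $\dfrac{\d_r\Delta Q(r)}{\Delta Q(r)}$, $\dfrac{(\d_r\Delta Q(r))^2}{\Delta Q(r)^2}\,r$, and $\dfrac{1}{r}$ respectively.

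First I would handle the three elementary pieces: the second integral gives $-\tfrac{19}{96}\log(\Delta Q(r_1)/\Delta Q(r_0))$ directly, and the fourth gives $\tfrac{1}{12}\log(r_1/r_0)$, while the third is already in the form of the last term of \eqref{FQ annulus} up to a constant. The only term requiring work is the first one, $-\tfrac{1}{32}\int_{r_0}^{r_1} \dfrac{\d_r^2\Delta Q(r)}{\Delta Q(r)}\,r\,dr$; here I would integrate by parts, writing $\d_r^2\Delta Q\cdot \dfrac{r}{\Delta Q} = \d_r(\d_r\Delta Q)\cdot\dfrac{r}{\Delta Q}$, to produce a boundary term $\big[\dfrac{r\,\d_r\Delta Q}{\Delta Q}\big]_{r_0}^{r_1}$ plus $-\int_{r_0}^{r_1}\d_r\Delta Q\cdot\d_r\!\big(\dfrac{r}{\Delta Q}\big)\,dr$. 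Expanding $\d_r\!\big(\dfrac{r}{\Delta Q}\big) = \dfrac{1}{\Delta Q} - \dfrac{r\,\d_r\Delta Q}{(\Delta Q)^2}$ converts this remaining integral into combinations of $\dfrac{\d_r\Delta Q}{\Delta Q}$ and $\dfrac{(\d_r\Delta Q)^2}{(\Delta Q)^2}\,r$, i.e.\ terms of the same three types already encountered.

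Then I would collect all contributions. After the integration by parts, every piece is one of: a boundary term of the form $c_1\big(r_1\tfrac{\d_r\Delta Q(r_1)}{\Delta Q(r_1)} - r_0\tfrac{\d_r\Delta Q(r_0)}{\Delta Q(r_0)}\big)$, a logarithmic term $c_2\log(\Delta Q(r_1)/\Delta Q(r_0))$, a logarithmic term $c_3\log(r_1/r_0)$, or the Dirichlet-type integral $c_4\int_{r_0}^{r_1}\big(\tfrac{\d_r\Delta Q(r)}{\Delta Q(r)}\big)^2 r\,dr$. It remains to verify that the coefficients $c_1,c_4$ match exactly those in \eqref{FQ annulus} (namely $-\tfrac{1}{16}$ and $\tfrac{1}{24}$), that the total $\log(\Delta Q(r_1)/\Delta Q(r_0))$ coefficient equals $\tfrac{1}{12}-(-\tfrac14) = \tfrac13$... wait, equals the $\tfrac{1}{12}$ of \eqref{FQ annulus} plus $-\tfrac14$, and that the $\log(r_1/r_0)$ coefficient equals $\tfrac13$. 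This bookkeeping is where the main obstacle lies: it is purely a matter of tracking rational coefficients through several integrations by parts, and the risk is an arithmetic slip rather than a conceptual gap. I expect the verification to go through cleanly because $\mathfrak{B}_1$ was reverse-engineered precisely so that this identity holds, but I would double-check the coefficient of the $\tfrac{1}{r}$-type and $\tfrac{\d_r\Delta Q}{\Delta Q}$-type terms most carefully, since those are the ones that combine the ``genuine'' $\mathfrak{B}_1$ contribution with the integration-by-parts remainder.
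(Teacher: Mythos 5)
Your outline reproduces the paper's proof exactly: write $\int_S \mathfrak{B}_1\,d\mu_Q = \int_{r_0}^{r_1}\mathfrak{B}_1(r)\Delta Q(r)\,2r\,dr$, observe that three of the four resulting pieces integrate directly to logarithms or are already in the form of the last term of \eqref{FQ annulus}, and integrate the $\d_r^2\Delta Q$ piece by parts once so that everything reduces to boundary terms, $\log(\Delta Q(r_1)/\Delta Q(r_0))$, $\log(r_1/r_0)$, and $\int(\d_r\Delta Q/\Delta Q)^2 r\,dr$. One small correction to the intermediate coefficients you quote: after including the factor $2r$ from $dA$ the second piece gives $-\tfrac{19}{48}\log(\Delta Q(r_1)/\Delta Q(r_0))$ (not $-\tfrac{19}{96}$) and the fourth gives $\tfrac16\log(r_1/r_0)$ (not $\tfrac{1}{12}$); with these fixed the bookkeeping closes exactly as you anticipate.
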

\begin{proof}
By \eqref{B1} and \eqref{eq msr Lap Q}, we have
\begin{equation*}
\begin{split}
\int_{S} \mathfrak{B}_1 \, d\mu_Q &= \frac{1}{6} \log\Big( \frac{r_1}{r_0} \Big) -\frac{19}{48} \log \Big( \frac{ \Delta Q(r_1) }{ \Delta Q(r_0) } \Big) - \frac{1}{16} \int_{r_0}^{r_1} \Big[  \frac{ \d_r^2 \Delta Q(r) }{ \Delta Q(r) } -\frac{5}{3} \Big( \frac{ \d_r \Delta Q(r) }{ \Delta Q(r) }\Big)^2 \Big] \,r \,dr. 
\end{split}
\end{equation*}
Then the lemma follows using integration by parts
\begin{equation}
\begin{split} \label{Delta Q partial integral}
&\quad \int_{r_0}^{r_1} \Big[  \frac{ \d_r^2 \Delta Q(r) }{ \Delta Q(r) } -\frac{5}{3} \Big( \frac{ \d_r \Delta Q(r) }{ \Delta Q(r) }\Big)^2 \Big] \,r \,dr 
\\
&=r_1\, \frac{ (\d_r \Delta Q)(r_1) }{ \Delta Q(r_1) } - r_0\, \frac{ (\d_r \Delta Q)(r_0) }{ \Delta Q(r_0) } + \log \Big( \frac{ \Delta Q(r_0) }{ \Delta Q(r_1) } \Big) - \frac{2}{3} \int_{r_0}^{r_1} \Big( \frac{ \d_r \Delta Q(r) }{ \Delta Q(r) }\Big)^2\,r \,dr. 
\end{split}
\end{equation}
\end{proof}

\subsection{Random normal matrix ensemble} \label{Subsection_annulus normal}

In this subsection, we prove Theorem~\ref{Thm_ZN annulus} (i).
By Lemma~\ref{Lem_hj asymptotic}, we have
\begin{equation}\label{Logh}
  \log h_j = - NV_{\tau(j)}(r_{\tau(j)}) + \frac{1}{2}\Big( \log (2\pi r_{\tau(j)}^2) - \log N - \log \Lap Q(r_{\tau(j)})\Big) + \frac{1}{N}\mathfrak{B}_1(r_{\tau(j)}) + O(N^{-2}).
\end{equation}
In the following lemmas, we analyze the asymptotic behavior of the partial sum of each term in \eqref{Logh}.

\begin{lem}\label{Sum:Vtau}
As $N \to \infty$, we have 
\begin{equation}
  \sum_{j=0}^{N-1} V_{\tau(j)}(r_{\tau(j)}) = N I_Q[\mu_Q]-U_{\mu_Q}(0) + \frac{1}{6N}\log \Big(\frac{r_0}{r_1}\Big) + O(N^{-3}).
\end{equation}
\end{lem}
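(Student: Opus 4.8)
The plan is to evaluate the sum $\sum_{j=0}^{N-1} V_{\tau(j)}(r_{\tau(j)})$ by the Euler--Maclaurin formula \eqref{EMF}, treating the summand as samples of the smooth function $g(\tau) := V_\tau(r_\tau)$ at the points $\tau = j/N$. First I would record the key structural fact about $g$: differentiating $g(\tau) = q(r_\tau) - 2\tau \log r_\tau$ and using $V_\tau'(r_\tau)=0$, i.e. $q'(r_\tau) = 2\tau/r_\tau$, the terms involving $dr_\tau/d\tau$ cancel and one gets the clean identity $g'(\tau) = -2\log r_\tau$. This is the analogue of an envelope/Hellmann--Feynman computation and is what makes everything tractable. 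Consequently $g''(\tau) = -2 r_\tau'/r_\tau = -1/(r_\tau^2 \Lap Q(r_\tau))$ by \eqref{drdtau}, and higher derivatives are likewise expressible through $\Lap Q$ and its $r$-derivatives at $r_\tau$; the point is only that these are bounded on $[0,1]$ since $r_0>0$ keeps us away from the origin.

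Next I would apply \eqref{EMF} with $f(x) = g(x/N)$ on $\{0,\dots,N-1\}$, or more conveniently write $\sum_{j=0}^{N-1} g(j/N) = N\int_0^1 g(\tau)\,d\tau - g(1) + \tfrac12(g(1)+g(0)) + \dots$; concretely I'd sum over $j=0,\dots,N-1$ and handle the shifted endpoint. The leading term is $N\int_0^1 g(\tau)\,d\tau$. Here I would verify, via integration by parts using $g'(\tau) = -2\log r_\tau$ and the substitution to the radial variable $r_\tau$ (so $d\tau = r_\tau \Lap Q(r_\tau)\,dr$, from \eqref{drdtau}), that $\int_0^1 g(\tau)\,d\tau = I_Q[\mu_Q]$; this is exactly the identity \eqref{energy radially sym}, and indeed the excerpt flags this as ``the computation \eqref{sum Vtau EM 1} below''. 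The $O(1)$ correction comes from the boundary terms: the $\tfrac12(g(0)+g(1))$ piece combined with the endpoint adjustment produces, after using $g(1) = q(r_1) - 2\log r_1$ and $g(0) = q(r_0)$ together with the formulas \eqref{potential radially sym} for $U_{\mu_Q}(0)$, precisely $-U_{\mu_Q}(0)$. The $O(1/N)$ term of Euler--Maclaurin is $\tfrac{B_2}{2}\cdot\tfrac1N(g'(1)-g'(0)) = \tfrac{1}{12N}(g'(1)-g'(0)) = \tfrac{1}{12N}(-2\log r_1 + 2\log r_0) = \tfrac{1}{6N}\log(r_0/r_1)$, matching the stated term. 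The next Euler--Maclaurin term involves $g^{(3)}(1)-g^{(3)}(0)$ at order $N^{-3}$, and the remainder $R_l$ is controlled by $\int_0^1 |g^{(l)}|$, which is finite; taking $l$ large enough (e.g. $l=4$) gives the claimed $O(N^{-3})$ — note there is no $N^{-2}$ term because Euler--Maclaurin only has odd-derivative corrections.

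The main obstacle, such as it is, is bookkeeping rather than conceptual: one must be careful that the sum runs over $j=0,\dots,N-1$ rather than $0,\dots,N$, so the naive ``$N\int_0^1$'' must be corrected by a half-endpoint term at $\tau=1$ together with the $-g(1)$ from the trapezoidal correction, and it is the precise combination of these that yields $-U_{\mu_Q}(0)$ rather than some other endpoint combination; getting the sign and the factor $\tfrac12$ right here, and cross-checking against \eqref{potential radially sym}, is where an error would most easily creep in. A secondary point is justifying that the implied constants in the derivative bounds for $g$ are uniform, which uses strict subharmonicity of $Q$ near the droplet (so $\Lap Q(r_\tau)$ is bounded below) and $r_0>0$ (so $r_\tau$ is bounded below); both are standing assumptions in this section.
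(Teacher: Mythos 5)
Your proposal is correct and is essentially the paper's argument: apply the Euler--Maclaurin formula \eqref{EMF} to $g(\tau)=V_\tau(r_\tau)$ at $\tau=j/N$, identify the leading integral $N\int_0^1 g$ with $N I_Q[\mu_Q]$ (this is the computation in \eqref{sum Vtau EM 1}), the endpoint correction $-\tfrac12(g(1)-g(0))$ with $-U_{\mu_Q}(0)$ via \eqref{potential radially sym}, and the $B_2$-term with $\tfrac{1}{6N}\log(r_0/r_1)$. The envelope identity $g'(\tau)=-2\log r_\tau$ you record up front is exactly what makes the paper's Leibniz computation of $\d_t V_{\tau(t)}(r_{\tau(t)})$ at $t=0,N$ collapse, so it is a cleaner packaging of the same step; one minor slip to fix when writing it out: from \eqref{drdtau} the change of variables is $d\tau = 2 r_\tau \Lap Q(r_\tau)\,dr_\tau$, not $r_\tau\Lap Q(r_\tau)\,dr_\tau$.
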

\begin{proof}
The sequence $\tau$ in \eqref{tau j wt tau j} can be extended to the function on $[0,N]$: $\tau(t) = t/N.$ 
Using the Euler-Maclaurin formula \eqref{EMF} and \eqref{tau j wt tau j}, we have
\begin{align}
\begin{split} \label{sum Vtau EM}
  \sum_{j=0}^{N-1} V_{\tau(j)}(r_{\tau(j)}) &= \int_{0}^{N} V_{\tau(t)}(r_{\tau(t)}) \, dt - \frac{1}{2}\big(V_{\tau(N)}(r_{\tau(N)})-V_{\tau(0)}(r_{\tau(0)})\big) \\
  &\quad + \frac{1}{12}\Big[\d_t V_{\tau(t)}(r_{\tau(t)})\big|_{t=N} - \d_t V_{\tau(t)}(r_{\tau(t)})\big|_{t=0} \Big] + O(N^{-3}).
\end{split}
\end{align}
Here, we also used the second Bernoulli number $B_2=1/6$, which can be easily seen from the definition \eqref{Bernoulli number}.
For the first term on the right-hand side of \eqref{sum Vtau EM}, the change of variables $s=r_{\tau(t)}$, the definition \eqref{V tau} of $V_\tau$, the definition \eqref{tau j wt tau j} of $\tau$, the formula \eqref{drdtau} of $dr/d\tau$, and  the equation \eqref{r tau eq} $r_\tau q'(r_\tau)=2\tau$ for $r_\tau$ give that 
\[
  \frac{1}{N}\int_{0}^{N}V_{\tau(t)}(r_{\tau(t)}) \,dt = 2 \int_{r_0}^{r_1} \big(q(s) - sq'(s)\log s \big) s \Lap Q(s)\, ds.
\]
We use the polar coordinate system to represent the first term on the right-hand side of the above equation as    
\[2\int_{r_0}^{r_1} s q(s) \Lap Q(s)\, ds = \int_{S} Q \cdot \Lap Q \,dA.
\]
By \eqref{Delta Q q' q''}, the method of integration by parts, and the relation $r_1 q'(r_1) = 2, q'(r_0)=0$ (see \eqref{r tau eq}), the second term is simplified to 
\[
- 2\int_{r_0}^{r_1} s^2q'(s)\log s   \Lap Q(s)\, ds = 
 - \frac{1}{4}\int_{r_0}^{r_1} \log s \cdot ((sq'(s))^2)'\, ds 
  \\
  = - \log r_1 + \frac{1}{4} \int_{r_0}^{r_1} s (q'(s))^2 \,ds.
\]
Applying the method of integration by parts again to the last integral,
\[
\frac{1}{4} \int_{r_0}^{r_1} s (q'(s))^2 \,ds = \frac{1}{2} q(r_1) - \frac{1}{4}\int_{r_0}^{r_1} q(s) (sq'(s))' \,ds.
\]
Using the formula \eqref{energy radially sym} of $I_Q[\mu_Q]$, the representation \eqref{Delta Q q' q''} of $\Delta Q$ in terms of $q$, and the equation \eqref{r tau eq} $r_\tau q'(r_\tau)=2\tau$ for $r_\tau$, we have
\begin{align} \label{sum Vtau EM 1}
\begin{split}
  \frac{1}{N}\int_{0}^{N}V_{\tau(t)}(r_{\tau(t)}) \,dt 
  &= \int_{S} Q \cdot \Lap Q \,dA - \log r_1 + \frac{1}{2} q(r_1) - \frac{1}{4}\int_{r_0}^{r_1} q(s) (sq'(s))' \,ds\\
  & = \frac{1}{2}\int_{S} Q \cdot \Lap Q \,dA - \log r_1 + \frac{1}{2} q(r_1)=I_{Q}[\mu_Q] .
\end{split}
\end{align}

For the next term on the right-hand side of \eqref{sum Vtau EM}, we observe that 
\begin{equation} \label{sum Vtau EM 2}
  V_{\tau(N)}(r_{\tau(N)}) - V_{\tau(0)}(r_{\tau(0)}) = V_1(r_1)-V_0(r_0) = q(r_1) - q(r_0)- 2\log r_1=2 U_{\mu_Q}(0),
\end{equation}
where we have used \eqref{potential radially sym}. 
To analyze the remaining term, we use the Leibniz rule and obtain
\begin{align*}
\begin{split}
  \d_t V_{\tau(t)}(r_{\tau(t)})\big|_{t=N} 
  & = \frac{1}{N} \big[\d_{\tau} (q(r_\tau) - 2 \log r_\tau)\big|_{\tau=1} - 2 \log r_1 \big], \\
  \d_t V_{\tau(t)}(r_{\tau(t)})\big|_{t=0} 
  & = \frac{1}{N} \big[\d_{\tau} q(r_\tau) \big|_{\tau=0} - 2 \log r_0 \big].
\end{split}
\end{align*}
It follows from $r_1 q'(r_1) = 2$, $r_0q'(r_0)=0$ and the formula \eqref{drdtau} of $dr/d\tau$ that 
\begin{align}
\begin{split} \label{sum Vtau EM 3}
  &\quad \d_t V_{\tau(t)}(r_{\tau(t)})\big|_{t=N} - \d_t V_{\tau(t)}(r_{\tau(t)})\big|_{t=0} 
  \\
  &= \frac{1}{N} \Big[ \frac{q'(r_1)}{2r_1\Lap Q(r_1)} - 2\log r_1 - \frac{1}{r_1^2 \Lap Q (r_1)} - \frac{q'(r_0)}{2r_0
  \Lap Q(r_0)} + 2\log r_0\Big] = \frac{2}{N} \log\Big( \frac{r_0}{r_1} \Big).
\end{split}
\end{align}
Combining \eqref{sum Vtau EM}, \eqref{sum Vtau EM 1}, \eqref{sum Vtau EM 2}, and \eqref{sum Vtau EM 3}, the proof is complete. 
\end{proof}

\begin{lem}\label{Sum:LapQ}
As $N\to \infty$, we have 
\begin{equation}
\label{Sum:LapQtau}
  \sum_{j=0}^{N-1}\log \Lap Q(r_{\tau(j)}) = N E_{Q}[\mu_Q] - \frac{1}{2} \log \Big( \frac{\Lap Q(r_1)} { \Lap Q(r_0)} \Big) + O(N^{-1}),
\end{equation}
and 
\begin{equation}
\label{Sum:logr}
  \sum_{j=0}^{N-1} \log r_{\tau(j)}  = -N U_{\mu_Q}(0) -\frac{1}{2}\log \Big(\frac{r_1}{r_0}\Big) + O(N^{-1}).
\end{equation}
\end{lem}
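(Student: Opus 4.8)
The plan is to prove both identities in Lemma~\ref{Sum:LapQ} by the same mechanism used in Lemma~\ref{Sum:Vtau}: apply the Euler--Maclaurin formula \eqref{EMF} to the smooth functions $t \mapsto \log \Lap Q(r_{\tau(t)})$ and $t \mapsto \log r_{\tau(t)}$ on $[0,N]$, where $\tau(t)=t/N$, keeping terms up to the boundary correction $\tfrac{f(m)+f(n)}{2}$ and discarding the $O(N^{-1})$ derivative terms (since each derivative in $t$ carries a factor $1/N$). So first I would write
\begin{equation*}
\sum_{j=0}^{N-1} g(r_{\tau(j)}) = \int_0^N g(r_{\tau(t)})\,dt - \frac{1}{2}\big(g(r_1)-g(r_0)\big) + O(N^{-1}),
\end{equation*}
valid for $g = \log \Lap Q$ and $g = \log r$, the error being controlled because $\d_t g(r_{\tau(t)}) = O(1/N)$ uniformly on $[0,1]$ by \eqref{drdtau} and smoothness/strict subharmonicity near the droplet.

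Next I would evaluate the leading integral in each case via the change of variables $s = r_{\tau(t)}$, using $dt = N\,d\tau$ and the Jacobian $d\tau/ds = 2s\,\Lap Q(s)$ from \eqref{drdtau}, so that
\begin{equation*}
\frac{1}{N}\int_0^N g(r_{\tau(t)})\,dt = \int_{r_0}^{r_1} g(s)\, 2s\,\Lap Q(s)\,ds = \int_S g\, d\mu_Q,
\end{equation*}
by \eqref{eq msr Lap Q}. For $g = \log \Lap Q$ this integral is exactly $E_Q[\mu_Q]$ by the definition \eqref{entropy}, giving the $N E_Q[\mu_Q]$ term. For $g = \log r = \log|w|$ (with $|w|=s$) this integral is $\int_S \log|w|\,d\mu_Q(w) = -U_{\mu_Q}(0)$ by \eqref{potential radially sym}, giving the $-N U_{\mu_Q}(0)$ term. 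Finally the boundary correction $-\tfrac12(g(r_1)-g(r_0))$ produces $-\tfrac12\log(\Lap Q(r_1)/\Lap Q(r_0))$ and $-\tfrac12\log(r_1/r_0)$ respectively, which are precisely the claimed subleading terms.

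The only mild subtlety — and the place where I would be most careful — is verifying the uniform $O(N^{-1})$ bound on the remainder: one must check that $r_{\tau(t)}$ stays in a fixed neighborhood of the droplet where $\Lap Q$ is bounded below away from zero (so $\log \Lap Q(r_{\tau(t)})$ and its $t$-derivatives are uniformly bounded), and that $r_0 > 0$ keeps $\log r_{\tau(t)}$ and its derivatives bounded at the lower endpoint. Both follow from the standing assumptions in this section ($r_0>0$, $Q$ smooth and strictly subharmonic near $S$) together with \eqref{drdtau}, which gives $|\d_t r_{\tau(t)}| = \tfrac{1}{2N}\cdot\tfrac{1}{r_{\tau(t)}\Lap Q(r_{\tau(t)})} = O(1/N)$; higher $t$-derivatives are handled similarly by repeated differentiation of the defining relation \eqref{r tau eq}. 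This is routine once the endpoint behavior is pinned down, so no genuine obstacle is expected — it is essentially a lighter-weight repetition of the computation already carried out in Lemma~\ref{Sum:Vtau}, retaining one fewer order in the Euler--Maclaurin expansion.
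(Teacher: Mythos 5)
Your argument is correct and is essentially identical to the paper's proof: both apply the Euler--Maclaurin formula \eqref{EMF} on $[0,N]$, identify the leading integral with $N\int_S g\,d\mu_Q$ via the change of variables $s=r_{\tau(t)}$ and \eqref{drdtau}, read off the boundary term, and discard the higher-order corrections. The only cosmetic difference is that the paper writes out the $\tfrac{1}{12}\,\d_t$ boundary term explicitly before observing it is $O(N^{-1})$, whereas you observe directly that every $\d_t$ contributes a factor of $1/N$.
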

\begin{proof}
As in Lemma \ref{Sum:Vtau}, we apply the Euler-Maclaurin formula \eqref{EMF} and obtain 
\begin{align} \label{sum Lap Q EM}
\begin{split}
  \sum_{j=0}^{N-1} \log \Lap Q(r_{\tau(j)}) &= \int_{0}^{N}\log \Lap Q(r_{\tau(t)})\, dt - \frac{1}{2} \log \Big( \frac{\Lap Q(r_1)} { \Lap Q(r_0)} \Big) 
  \\
  &\quad +\frac{1}{12}\Big[ \d_t \log\Lap Q(r_{\tau(t)})\big|_{t=N} - \d_t \log\Lap Q(r_{\tau(t)})\big|_{t=0} \Big] + O(N^{-3}).
\end{split}
\end{align}
It follows from a change of variables and \eqref{drdtau} that the first term on the right-hand side of \eqref{sum Lap Q EM} gives the entropy term 
\begin{equation}
  \int_{0}^{N}\log \Lap Q(r_{\tau(t)}) \,dt = N \int_{r_0}^{r_1} \log \Lap Q(s) 2s\Lap Q(s)\, ds = N \int_{S} \log \Lap Q \,d\mu_Q.
\end{equation}
By the chain rule and the formula \eqref{drdtau} of $dr/d\tau$ again, we also observe that 
\begin{align*}
  \d_t\log\Lap Q(r_{\tau(t)})\big|_{t=N} - \d_t\log\Lap Q(r_{\tau(t)})\big|_{t=0} 
  = \frac{1}{2N}\Big(\frac{(\d_r\Lap Q)(r_1)}{r_1 (\Lap Q (r_1))^2} - \frac{(\d_r\Lap Q)(r_0)}{r_0 (\Lap Q (r_0))^2} \Big)=O(N^{-1}).
\end{align*}
Combining all of the above, we obtain \eqref{Sum:LapQtau}.
The equation \eqref{Sum:logr} follows similarly.
\end{proof}

We are now ready to prove the first assertion of Theorem~\ref{Thm_ZN annulus}.

\begin{proof}[Proof of Theorem~\ref{Thm_ZN annulus} (i)]
Combining Lemmas \ref{Sum:Vtau} and \ref{Sum:LapQ} with \eqref{Logh}, we obtain 
\begin{align*} 
  \sum_{j=0}^{N-1}\log h_j &= -N^2 I_Q[\mu_Q] - \frac{N}{2}\log N + N\Big( \frac{\log (2\pi)}{2} -\frac12 E_{Q}[\mu_Q] \Big) 
  \\
  &\quad -\frac{1}{3}\log\Big(\frac{r_1}{r_0}\Big) + \frac{1}{4}\log \Big( \frac{\Lap Q(r_1)}{\Lap Q(r_0)} \Big) + \int_{S} \mathfrak{B}_1 \, d\mu_Q + O(N^{-1}),
\end{align*}
where $\mathfrak{B}_1$ is given by \eqref{B1}.
Then the desired asymptotic expansion \eqref{ZN expansion cplx annulus} follows from 
\begin{equation} \label{log N!}
\log N!= N \log N-N+\frac12 \log N+\frac12 \log(2\pi)+O(\frac{1}{N}), \qquad (N \to \infty)
\end{equation}
(see e.g. \cite[Eq.(5.11.1)]{olver2010nist}) and Lemma~\ref{Lem_B1 integral}. 
\end{proof}

\subsection{Planar symplectic ensemble} \label{Subsection_annulus symplectic}

In this subsection, we prove Theorem~\ref{Thm_ZN annulus} (ii).

Recall that by Lemma~\ref{Lem_hj asymptotic},
\begin{equation}\label{Logh tilde}
  \log \wt{h}_j = - 2NV_{\wt{\tau}(j)}(r_{\wt{\tau}(j)}) + \frac{1}{2}\Big( \log (\pi r_{\wt{\tau}(j)}^2) - \log N - \log \Lap Q(r_{\wt{\tau}(j)})\Big) + \frac{1}{2N}\mathfrak{B}_1(r_{\wt{\tau}(j)}) + O(N^{-2}).
\end{equation}
In the following lemmas, we derive asymptotic expansions for the partial sum of each term on the right-hand side of \eqref{Logh tilde}. 
We obtain the following as a counterpart of Lemma~\ref{Sum:Vtau}.

\begin{lem}\label{Sum:Vtau symplectic}
As $N\to \infty$, we have
\begin{equation} \label{Vtau sum odd}
  \sum_{j=0}^{N-1} V_{\wt{\tau}(2j+1)}(r_{\wt{\tau}(2j+1)}) =N I_{Q}[\mu_{Q}] -\frac{1}{12N}\log \Big(\frac{r_0}{r_1}\Big)  + O(N^{-3}).
\end{equation}
\end{lem}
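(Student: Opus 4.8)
The plan is to apply the Euler--Maclaurin formula to the sum over the odd-indexed points, exactly as in the proof of Lemma~\ref{Sum:Vtau}, but now keeping careful track of the fact that $\wt\tau(2j+1) = (2j+1)/(2N)$ samples the function $\tau \mapsto V_\tau(r_\tau)$ at the \emph{midpoints} of a partition of $[0,1]$ with mesh $1/N$, rather than at the left endpoints. First I would introduce the function $g(t) := V_{\tau(t)}(r_{\tau(t)})$ with $\tau(t) = t/N$ (the same function as in Lemma~\ref{Sum:Vtau}), so that the sum in question is $\sum_{j=0}^{N-1} g(j+\tfrac12)$. The midpoint version of Euler--Maclaurin (or equivalently, writing $\sum_{j} g(j+\tfrac12) = \sum_{\text{all integers/half-integers}} - \sum_{\text{integers}}$ and using \eqref{EMF} twice) gives
\begin{equation*}
\sum_{j=0}^{N-1} g(j+\tfrac12) = \int_0^N g(t)\,dt - \frac{1}{24}\Big( g'(N) - g'(0) \Big) + O(N^{-3}),
\end{equation*}
where the coefficient $-1/24$ replaces the $+1/12 = B_2/2!$ of the ordinary formula; it comes from $(2^{1-2}-1)B_2/2! = -B_2/4 = -1/24$ in the midpoint correction. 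The absence of a boundary term $\tfrac12(g(N)-g(0))$ is the key structural difference from Lemma~\ref{Sum:Vtau}, and it is exactly why the $-U_{\mu_Q}(0)$ term disappears here.

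Next I would identify the two surviving pieces. The integral $\int_0^N g(t)\,dt = N I_Q[\mu_Q]$ is \emph{literally} the computation \eqref{sum Vtau EM 1} already carried out in the proof of Lemma~\ref{Sum:Vtau} — nothing new is needed. For the derivative term, \eqref{sum Vtau EM 3} already records that $g'(N) - g'(0) = \tfrac{2}{N}\log(r_0/r_1)$, hence $-\tfrac{1}{24}(g'(N)-g'(0)) = -\tfrac{1}{12N}\log(r_0/r_1)$, which is precisely the claimed subleading term. Assembling these yields \eqref{Vtau sum odd}.

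The main obstacle — really the only point requiring care — is justifying the midpoint Euler--Maclaurin expansion with the correct $-1/24$ coefficient and controlling the remainder at $O(N^{-3})$. One clean way is the telescoping identity: apply \eqref{EMF} to $\sum_{j=0}^{2N-1} g(j/2)\cdot(\text{step }1/2)$ over the fine grid and subtract $\sum_{j=0}^{N-1} g(j)$ over the coarse grid, so that $\sum_{j=0}^{N-1} g(j+\tfrac12)$ is the difference; the integral terms combine to $\int_0^N g$, the half-sum endpoint terms cancel in the subtraction, and the $B_2$-terms combine with the right rescaling to produce $-\tfrac{1}{24}(g'(N)-g'(0))$, while the $O(\cdot)$ remainders are each $O(N^{-3})$ since $g$ and its derivatives are smooth and uniformly bounded on $[0,N]$ after the rescaling $\tau = t/N$ (note $g^{(k)}(t) = O(N^{-k})$, so the $l=4$ error bound in \eqref{EMF} is $O(N^{-3})$). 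Once this bookkeeping is in place, the lemma follows immediately from the already-established computations \eqref{sum Vtau EM 1} and \eqref{sum Vtau EM 3}.
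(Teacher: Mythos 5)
Your proof is correct and is, at bottom, the same argument as the paper's: the paper obtains \eqref{Vtau sum odd} by applying Lemma~\ref{Sum:Vtau} once with $N\to 2N$ (giving $\sum_{j=0}^{2N-1}V_{\wt\tau(j)}(r_{\wt\tau(j)})$) and once directly (giving $\sum_{j=0}^{N-1}V_{\wt\tau(2j)}(r_{\wt\tau(2j)})$), then subtracting — exactly your fine-grid-minus-coarse-grid telescoping, and the cancellation of $-U_{\mu_Q}(0)$ and the change $\tfrac1{6N}\mapsto -\tfrac1{12N}$ fall out of that subtraction. Your reframing in terms of the midpoint Euler--Maclaurin coefficient $B_2(1/2)/2!=-1/24$ is a nice conceptual gloss that makes transparent why the boundary term $\tfrac12(g(N)-g(0))$ drops, but it is the same computation and reuses the same ingredients \eqref{sum Vtau EM 1} and \eqref{sum Vtau EM 3}.
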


\begin{proof}
Applying Lemma~\ref{Sum:Vtau} by replacing $N$ with $2N$, we have 
\begin{equation} \label{Vtau sum 2N}
  \sum_{j=0}^{2N-1} V_{\wt{\tau}(j)}(r_{\wt{\tau}(j)}) = 2N I_Q[\mu_Q] -U_{\mu_Q}(0) + \frac{1}{12N}\log \Big(\frac{r_0}{r_1}\Big) + O(N^{-3}).
\end{equation}
It follows from 
$V_{\wt{\tau}(2j)}(r_{\wt{\tau}(2j)}) = V_{\tau(j)}(r_{\tau(j)})$ and Lemma~\ref{Sum:Vtau} that 
\begin{equation} \label{Vtau sum even}
  \sum_{j=0}^{N-1} V_{\wt{\tau}(2j)}(r_{\wt{\tau}(2j)}) = N I_{Q}[\mu_{Q}] -U_{\mu_Q}(0) + \frac{1}{6N}\log\Big( \frac{r_0}{r_1}\Big) + O(N^{-3}).
\end{equation}
Then \eqref{Vtau sum odd} follows from \eqref{Vtau sum 2N} and \eqref{Vtau sum even}.
\end{proof}

\begin{lem}\label{Sum:LapQ symplectic}
As $N\to \infty$, we have 
\[
 \sum_{j=0}^{N-1}\log \Lap Q(r_{\wt{\tau}(2j+1)}) = N E_{Q}[\mu_Q] + O(N^{-1}),
\]
 and
\[
   \sum_{j=0}^{N-1} \log r_{\wt{\tau}(2j+1)} = -N U_{\mu_Q}(0) + O(N^{-1}).
\]
\end{lem}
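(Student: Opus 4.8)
The plan is to mirror the proof of Lemma~\ref{Sum:LapQ}, working with the odd-indexed subsequence $r_{\wt\tau(2j+1)}$ in place of $r_{\tau(j)}$. The key observation is that $\wt\tau(2j+1) = (2j+1)/(2N) = (j + \tfrac12)/N$, so the summand $\log\Lap Q(r_{\wt\tau(2j+1)})$ is the function $t \mapsto \log\Lap Q(r_{t/N})$ evaluated at the half-integer points $t = j+\tfrac12$ for $j = 0,\dots,N-1$. To handle such midpoint sums one uses the midpoint version of the Euler--Maclaurin formula, or equivalently subtracts the even-indexed sum from the full sum over $j=0,\dots,2N-1$: writing $\sum_{j=0}^{N-1} g(r_{\wt\tau(2j+1)}) = \sum_{j=0}^{2N-1} g(r_{\wt\tau(j)}) - \sum_{j=0}^{N-1} g(r_{\wt\tau(2j)})$ with $g = \log\Lap Q$, and noting $r_{\wt\tau(2j)} = r_{\tau(j)}$.

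First I would apply \eqref{Sum:LapQtau} with $N$ replaced by $2N$ to get $\sum_{j=0}^{2N-1}\log\Lap Q(r_{\wt\tau(j)}) = 2N E_Q[\mu_Q] - \tfrac12\log(\Lap Q(r_1)/\Lap Q(r_0)) + O(N^{-1})$. Then I would apply \eqref{Sum:LapQtau} directly (using $r_{\wt\tau(2j)} = r_{\tau(j)}$) to get $\sum_{j=0}^{N-1}\log\Lap Q(r_{\tau(j)}) = N E_Q[\mu_Q] - \tfrac12\log(\Lap Q(r_1)/\Lap Q(r_0)) + O(N^{-1})$. Subtracting, the boundary terms $-\tfrac12\log(\Lap Q(r_1)/\Lap Q(r_0))$ cancel exactly, leaving $\sum_{j=0}^{N-1}\log\Lap Q(r_{\wt\tau(2j+1)}) = N E_Q[\mu_Q] + O(N^{-1})$, as claimed. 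The second identity follows in exactly the same way: from \eqref{Sum:logr} with $N \mapsto 2N$ one gets $\sum_{j=0}^{2N-1}\log r_{\wt\tau(j)} = -2N U_{\mu_Q}(0) - \tfrac12\log(r_1/r_0) + O(N^{-1})$, and from \eqref{Sum:logr} directly $\sum_{j=0}^{N-1}\log r_{\tau(j)} = -N U_{\mu_Q}(0) - \tfrac12\log(r_1/r_0) + O(N^{-1})$; subtracting cancels the $-\tfrac12\log(r_1/r_0)$ terms and yields $-N U_{\mu_Q}(0) + O(N^{-1})$.

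There is essentially no obstacle here: this lemma is a routine consequence of Lemma~\ref{Sum:LapQ} combined with the arithmetic identity relating odd-indexed, even-indexed, and full sums. The only point requiring a little care is making sure the error terms are genuinely $O(N^{-1})$ and not merely $O(1)$ — but since we only need the $O(N^{-1})$ level of precision and \eqref{Sum:LapQtau} and \eqref{Sum:logr} already provide errors of that order (indeed the constant and $O(N^{-1})$ terms of the full $2N$-sum and the $N$-sum cancel in the difference, so one could in principle track a sharper remainder, but it is unnecessary). Alternatively, one could prove the lemma from scratch via the midpoint Euler--Maclaurin expansion, which would make transparent why the leading term is the entropy integral $N\int_S \log\Lap Q\,d\mu_Q = N E_Q[\mu_Q]$ with no $O(\log N)$ or $O(1)$ correction, the midpoint rule having higher-order accuracy than the trapezoidal rule at the endpoints; but the subtraction argument is shorter and I would present that.
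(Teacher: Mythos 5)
Your proposal is correct and coincides with the paper's own argument: write the odd-indexed sum as the full $2N$-sum minus the even-indexed sum, apply Lemma~\ref{Sum:LapQ} (with $N\mapsto 2N$ for the full sum), and observe that the boundary terms cancel. The only cosmetic difference is that you invoke $r_{\wt\tau(2j)}=r_{\tau(j)}$ to reuse Lemma~\ref{Sum:LapQ} verbatim for the even-indexed sum, whereas the paper phrases this step as a re-run of the same Euler--Maclaurin computation; these are the same thing.
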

\begin{proof}
By Lemma~\ref{Sum:LapQ}, we have 
\begin{align}
\sum_{j=0}^{2N-1}\log \Lap Q(r_{\wt{\tau}(j)}) & = 2N E_{Q}[\mu_Q] - \frac{1}{2} \log\Big( \frac{ \Lap Q(r_1) } { \Lap Q(r_0) } \Big) + O(N^{-1}),\\
  \sum_{j=0}^{2N-1} \log r_{\wt{\tau}(j)} & = -2NU_{\mu_Q}(0) - \frac{1}{2}\log\Big( \frac{r_1}{r_0}\Big) + O(N^{-1}).
\end{align}
Along the lines of Lemma~\ref{Sum:LapQ}, one can also show that 
\begin{align}
 \sum_{j=0}^{N-1}\log \Lap Q(r_{\wt{\tau}(2j)})& = N E_{Q}[\mu_Q] - \frac{1}{2}\log\Big( \frac{ \Lap Q(r_1) } { \Lap Q(r_0) } \Big) + O(N^{-1}),
  \\
   \sum_{j=0}^{N-1} \log r_{\wt{\tau}(2j)}& = -NU_{\mu_Q}(0) -\frac{1}{2}\log \Big(\frac{r_1}{r_0}\Big) + O(N^{-1}).
\end{align}
This completes the proof. 
\end{proof}

We now prove the second assertion of Theorem~\ref{Thm_ZN annulus}.

\begin{proof}[Proof of Theorem~\ref{Thm_ZN annulus} (ii)]

By Lemmas~\ref{Sum:Vtau symplectic} and \ref{Sum:LapQ symplectic}, we have
\begin{align*} 
  \sum_{j=0}^{N-1}\log (2\widetilde{h}_{2j+1}) &=-2N^2 I_Q[\mu_Q] -\frac{N}{2}\log N +\frac{N}{2} \log 2 + \frac{N}{2} \log(2\pi)
  \\
  &\quad -N U_{\mu_Q}(0) -\frac{N}{2} E_Q[\mu_Q] +\frac12 \int_{S} \mathfrak{B}_1 \, d\mu_Q +\frac{1}{6} \log\Big( \frac{r_0}{r_1} \Big)+O(N^{-1}) .
\end{align*}
Combining \eqref{ZN random normal symplectic}, \eqref{log N!}, and Lemma~\ref{Lem_B1 integral} completes the proof. 
\end{proof}

\section{Proof of Theorem~\ref{Thm_ZN disc}} \label{Section_disc}

In this section, we prove Theorem~\ref{Thm_ZN disc}. 
Throughout this section, we let $r_0=0.$

\subsection{Asymptotics of the orthogonal norm}

Let $\delta_N=N^{-1/2}\log N$ and $m_N = N^{\epsilon}$ for $0<\epsilon<1/5$. 
As explained in Subsection~\ref{Subsec_outline}, for the disc droplet case, the asymptotic behaviors of $h_j$ and $\wt{h}_j$ depend on whether the degree $j$ is sufficiently small or not, see Lemmas~\ref{Lem_hj lower} and \ref{Lem_hj higher}, respectively. 

\begin{lem}\label{Lem_hj lower}
As $N \to \infty$, the following holds. 
\begin{itemize}
  \item For $j=0,1,\dots,m_N-1$, we have
\begin{align*}
  \log h_j = - Nq(0) - (j+1) \log \Big(\frac{N}{2}q''(0) \Big) + \log j! + O\big(N^{-\frac{1}{2}} (j+1)^{\frac{3}{2}}(\log N)^{3}  \big).
\end{align*}
 \item 
For $j =0,1,\dots,2m_N-1$, we have
\begin{align*}
  \log \widetilde{h}_j = - 2Nq(0) - (j+1) \log (Nq''(0) ) + \log j! + O\big(N^{-\frac{1}{2}}(j+1)^{\frac{3}{2}}(\log N)^3\big).
\end{align*}
\end{itemize}
\end{lem}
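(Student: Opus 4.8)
\textbf{Proof plan for Lemma~\ref{Lem_hj lower}.}

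The plan is to analyze $h_j = \int_0^\infty r^{2j} e^{-Nq(r)}\,2r\,dr$ directly for small $j$, where the mass concentrates near the origin rather than near $r_{\tau(j)}$. First I would substitute $r = s/\sqrt{N}$ (up to a constant factor tuned to $q''(0)$), which turns the integral into $\int r^{2j}e^{-Nq(r)}\,2r\,dr = N^{-(j+1)}\int s^{2j}e^{-Nq(s/\sqrt N)}\,2s\,ds$. Since $Q$ is smooth, subharmonic, and radially symmetric with $q'(0)=0$ (because $r_0=0$ forces $rq'(r)\to 0$), Taylor expansion gives $q(r) = q(0) + \tfrac12 q''(0)r^2 + O(r^3)$ near the origin, with $q''(0) = 4\Delta Q(0) > 0$ by strict subharmonicity near the droplet (cf. \eqref{Delta Q q' q''}). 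Hence $Nq(s/\sqrt N) = Nq(0) + \tfrac12 q''(0)s^2 + O(N^{-1/2}s^3)$, and the leading contribution is the Gaussian-type integral $\int_0^\infty s^{2j}e^{-\frac12 q''(0)s^2}\,2s\,ds = (q''(0)/2)^{-(j+1)}\,j!$, using the standard formula $\int_0^\infty t^{j}e^{-at}\,dt = j!\,a^{-(j+1)}$ after $t = s^2$. Taking logarithms yields the claimed main terms $-Nq(0) - (j+1)\log(\tfrac N2 q''(0)) + \log j!$.

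The heart of the matter is controlling the error uniformly over the range $0 \le j \le m_N - 1 = N^\epsilon - 1$. I would split the integral at $r = \delta_N = N^{-1/2}\log N$. On the outer region $r > \delta_N$, the growth condition \eqref{Q growth condition} together with a bound of the type used in \eqref{M choose}--\eqref{ex_error} shows the contribution is superpolynomially small relative to the main term, so it is absorbed into the error after dividing by $e^{-Nq(0)}(q''(0)N/2)^{-(j+1)}j!$ — one just needs that $j$ is not too large for this comparison to survive, which is exactly why $\epsilon < 1/5$ is imposed. On the inner region $r < \delta_N$, I would write $e^{-Nq(r)} = e^{-Nq(0)}e^{-\frac12 Nq''(0)r^2}\bigl(1 + O(N r^3)\bigr)$ with $Nr^3 \le N\delta_N^3 = N^{-1/2}(\log N)^3$, and then estimate the relative error of the perturbation against the Gaussian moment: after rescaling, the correction term contributes a factor bounded by $O\bigl(N^{-1/2}(\log N)^3 \cdot \mathbb E[s^3]\bigr)$ where $s$ has a chi-type distribution of "dimension" growing with $j$, giving the extra factor $(j+1)^{3/2}$. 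Extending the truncated Gaussian integral back to $(0,\infty)$ costs only another superpolynomially small error. Collecting these gives the stated $O\bigl(N^{-1/2}(j+1)^{3/2}(\log N)^3\bigr)$.

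The second assertion, for $\widetilde h_j = \int_0^\infty r^{2j}e^{-2Nq(r)}\,2r\,dr$, follows verbatim by replacing $N$ with $2N$ throughout: the substitution $r = s/\sqrt{2N}$ produces $-2Nq(0) - (j+1)\log(Nq''(0)) + \log j!$ (note $2N \cdot q''(0)/2 = N q''(0)$), and the error becomes $O\bigl((2N)^{-1/2}(j+1)^{3/2}(\log(2N))^3\bigr) = O\bigl(N^{-1/2}(j+1)^{3/2}(\log N)^3\bigr)$; the range $0 \le j \le 2m_N - 1$ is the image of $0 \le j \le m_N - 1$ under this doubling, consistent with the indexing in \eqref{ZN random normal symplectic}.

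\textbf{Main obstacle.} The delicate point is the \emph{uniformity} of the error in $j$: the naive Laplace/Watson estimate degrades as $j$ grows because the effective width of the integrand shrinks like $1/\sqrt{j}$ relative to the scale at which the cubic term $q'''(0)$ matters. Keeping track of the precise power $(j+1)^{3/2}$ — rather than a cruder bound — and verifying that $N^{-1/2}(j+1)^{3/2}(\log N)^3 = o(1)$ for $j < m_N = N^\epsilon$ with $\epsilon < 1/5$ is what makes the bookkeeping nontrivial; everything else is a routine one-term Laplace expansion near a non-degenerate interior minimum of $q$ at the origin.
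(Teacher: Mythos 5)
Your overall strategy (rescale by $\sqrt{N}$, Taylor-expand $q$ at the origin, match the Gaussian moment to $j!$) is the right one and matches the paper, but there is a genuine error in your truncation: you split at the \emph{fixed} scale $\delta_N = N^{-1/2}\log N$, whereas the paper truncates at the $j$-dependent scale $r^*_{\tau(j+\frac12)} = r_{\tau(j+\frac12)}\log N \asymp \sqrt{(j+\tfrac12)/(N\Delta Q(0))}\,\log N$. This distinction is not cosmetic. After extracting $e^{-Nq(0)}$, the integrand $r^{2j+1}e^{-\frac12 N q''(0)r^2}$ peaks near $r^*(j)\approx \sqrt{(2j+1)/(N q''(0))}$, i.e.\ at $s = r\sqrt{N}\approx\sqrt{(2j+1)/q''(0)}$ in your rescaled variable. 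Your cutoff corresponds to $s = \log N$, so as soon as $j \gtrsim q''(0)(\log N)^2/2$ — far below $m_N - 1 = N^\epsilon - 1$ — the peak lies entirely in your ``outer'' region. In that regime the outer integral is not superpolynomially small relative to the main term; it \emph{is} the main term, and your claimed bound ``absorbed into the error'' is false. Consequently the inner-region Taylor estimate captures only an exponentially small fraction of $h_j$ for most $j$ in the intended range.

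The fix, which is what the paper does, is to let the cutoff track the peak: take $r^*_{\tau(j+\frac12)} = r_{\tau(j+\frac12)}\log N$. Then for all $j < m_N$ the peak sits a factor $\log N$ inside the cut, the outer tail is $O(e^{-c(j+\frac12)(\log N)^2})$ after comparison with the $M$-th power weight as in \eqref{M choose}--\eqref{ex_error}, and the cubic Taylor remainder on the inner region is $O\bigl(N (r^*_{\tau(j+\frac12)})^3\bigr) = O\bigl(N^{-1/2}(j+\tfrac12)^{3/2}(\log N)^3\bigr)$, which is exactly the claimed error. Your heuristic that the error should carry a factor $(j+1)^{3/2}$ via a chi-type third moment is correct and points at the same mechanism, so you had the right scaling in mind; you just fixed the cutoff at the $j=0$ scale rather than letting it grow with $\sqrt{j}$. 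Two smaller remarks: the condition $\epsilon < 1/5$ is needed later, when the errors $O(N^{-1/2}(j+1)^{3/2}(\log N)^3)$ are summed over $j < m_N$ to give $O(N^{-1/2}m_N^{5/2}(\log N)^3) = o(1)$, not, as you suggest, to make the outer-region comparison go through; and your reduction of $\widetilde h_j$ to $h_j$ via $N\mapsto 2N$ is correct and is how the paper handles it.
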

\begin{proof}
The second assertion is an immediate consequence of the first one.
Recall that $\tau(j)=j/N$.
Let
\begin{equation*}
  r^*_\tau := r_\tau \cdot \log N,
\end{equation*}
where $r_\tau$ is given in \eqref{r tau min} or \eqref{r tau eq}, $r_\tau q'(r_\tau) = 2\tau$. 
We consider the decomposition
\begin{align*}
 h_j &= \int_{0}^{\infty} 2r^{2j+1} e^{-N q(r)} \, dr = \int_{0}^{r^*_{{\tau(j+\frac{1}{2})}}} 2r^{2j+1} e^{-Nq(r)}\, dr + \int_{r^*_{\tau(j+\frac{1}{2})}}^\infty 2r^{2j+1} e^{-Nq(r)}\, dr.
\end{align*}
Due to strict subharmonicity of $Q$ in a neighborhood of the droplet, $r_\tau$ defined in \eqref{r tau min} satisfies 
$r_{\tau} = O({\tau}^{\frac{1}{2}})$ as $\tau \to 0$. 
Since the function $V_{\tau}$ in \eqref{V tau} has a global minimum at $r=r_{\tau}$ and increases in $(r_{\tau},\infty)$, for all $r>r^*_{\tau(j+\frac{1}{2})}$, we have
\begin{align} 
\begin{split} \label{V increas h lower}
  V_{\tau(j+\frac{1}{2})}(r) 
  &\geq V_{\tau(j+\frac{1}{2})}(r^*_{\tau(j+\frac{1}{2})}) 
  = q(r^*_{\tau(j+\frac{1}{2})}) - \frac{2j+1}{N}\log(r^*_{\tau(j+\frac{1}{2})})\\
  &= 
  q(0) + \frac{1}{2}q''(0)(r^*_{\tau(j+\frac{1}{2})})^2 - \frac{2j+1}{N}\log(r^*_{\tau(j+\frac{1}{2})}) + O(r^*_{\tau(j+\frac{1}{2})})^3.
\end{split}
\end{align}
Here, we also have used $q'(0)=0$, which follows from \eqref{Delta Q q' q''} and the fact that $\Lap Q(z) \in (0,\infty)$ near the origin. 
Using \eqref{V increas h lower}, it follows that 
\begin{align*}
  &\quad e^{Nq(0)}\int_{r^*_{\tau(j+\frac{1}{2})}}^\infty 2r^{2j+1} e^{-N q(r)}\, dr 
  = \int_{r^*_{\tau(j+\frac{1}{2})}}^\infty 2\,e^{-N( V_{\tau(j+\frac{1}{2})}(r)-q(0))}\, dr
  \\ 
  &\leq e^{-c_1(N-M) (r^*_{\tau(j+\frac{1}{2})} )^2 } (r^*_{\tau(j+\frac{1}{2})})^{(2j+1)\frac{N-M}{N}} \int_{r^*_{\tau(j+\frac{1}{2})}}^\infty 2 \, e^{-M(V_{\tau(j+\frac{1}{2})}(r)-q(0))}\, dr =: \epsilon_N(j)
\end{align*}
for some $c_1>0$. Here, $M$ is given by \eqref{M choose}. 
Since $r_{\tau} = O({\tau}^{\frac{1}{2}})$, there exists $c_2>0$ such that 
\begin{align*}
\epsilon_N(j) = O \Big( (r^*_{\tau(j+\frac{1}{2})})^{(2j+1)\frac{N-M}{N}} \cdot e^{-c_2 (j+\frac{1}{2})(\log N)^2 } \Big) = O\Big( \big(\tfrac{2j+1}{2N}
  (\log N)^2 \big)^{(2j+1)\frac{N-M}{2N}} e^{-c_2(j+\frac{1}{2})(\log N)^2}\Big)  .
\end{align*}
Therefore we obtain 
\begin{align*}
  h_j &= \int_{0}^{\infty} 2r^{2j+1} e^{-N q(r)} \, dr = \int_{0}^{r^*_{\tau(j+\frac{1}{2})}} 2r^{2j+1} e^{-Nq(r)}\, dr + e^{-Nq(0)} \epsilon_N(j)
  \\
    &= \int_{0}^{r^*_{\tau(j+\frac{1}{2})}} 2r^{2j+1} e^{-N(q(0) + \frac{1}{2}q''(0)r^2 ) + O(N (r^*_{\tau(j+\frac{1}{2})})^3 ) } \,dr + e^{-Nq(0)}\epsilon_N(j)
    \\
    &= e^{-Nq(0)} \Big[ N^{-(j+1)} \int_{0}^{\infty} 2r^{2j+1} e^{- \frac{1}{2}q''(0)r^2 } \,dr \, \big(1 + O(N (r^*_{\tau(j+\frac{1}{2})})^3 )\big)+ \epsilon_N(j) \Big]
    \\
    &= e^{-Nq(0)} \Big[ N^{-(j+1)} \Gamma(j+1) \Big(\frac{1}{2}q''(0)\Big)^{-(j+1)}\, \Big(1 + O\Big(N^{-\frac{1}{2}} \big(j+\tfrac{1}{2}\big)^{\frac{3}{2}}(\log N)^3 \Big) \Big)+ \epsilon_N(j) \Big],
\end{align*}
where $\epsilon_N(j)$ is negligible. This completes the proof.
\end{proof}

Recall that the sequences $\tau(j)$ and $\wt{\tau}(j)$ are given by \eqref{tau j wt tau j}: $\tau(j):=j/N, \wt{\tau}(j):=j/(2N)$ and the function $\mathfrak{B}_1$ is given by \eqref{B1}:
\begin{equation*} 
\begin{split}
\mathfrak{B}_1(r):= -\frac{1}{32} \frac{ \d_r^2 \Delta Q(r) }{ (\Delta Q(r))^2 } -\frac{19}{96r} \frac{\d_r \Delta Q(r)}{ (\Delta Q(r))^2 }+\frac{5}{96} \frac{ (\d_r \Delta Q(r))^2 }{ \Delta Q(r)^3 } +\frac{1}{12r^2} \frac{1}{\Delta Q(r)}.
\end{split}
\end{equation*} 
Recall also that $V_\tau$ is given by \eqref{V tau} $V_\tau(r):=q(r)-2\tau\log r$ and $r_{\tau}$ is given by \eqref{r tau min} or \eqref{r tau eq} $r_\tau q'(r_\tau)=2\tau$.
As a counterpart of Lemma~\ref{Lem_hj asymptotic}, we show the following lemma. 

\begin{lem}\label{Lem_hj higher}
As $N \to \infty$, the following holds.
\begin{itemize}
  \item For $j =m_N,m_N+1,\dots, N-1$, we have
\begin{equation*}
  \log h_j = - NV_{\tau(j)}(r_{\tau(j)}) + \frac{1}{2} \log \Big( \frac{2\pi r_{\tau(j)}^2 }{ N \Lap Q(r_{\tau(j)}) } \Big) + \frac{1}{N}\mathfrak{B}_1(r_{\tau(j)}) + O(j^{-\frac{3}{2}}(\log N)^{\alpha}). 
\end{equation*}
\item For $j =2m_N,2m_N+1, \dots, 2N-1$, we have
\begin{equation*}
  \log \widetilde{h}_j = - 2NV_{\wt{\tau}(j)}(r_{\wt{\tau}(j)}) + \frac{1}{2} \log\Big( \frac{\pi r_{\wt{\tau}(j)}^2 }{ N\Lap Q(r_{\wt{\tau}(j)}) } \Big)  + \frac{1}{2N}\mathfrak{B}_1(r_{\wt{\tau}(j)}) + O(j^{-\frac{3}{2}}(\log N)^{\alpha}). 
\end{equation*}
\end{itemize} 
Here $\alpha>0$ is a small constant. 
\end{lem}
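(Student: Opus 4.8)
The plan is to repeat the Laplace-method analysis used in the proof of Lemma~\ref{Lem_hj asymptotic}, but now tracking how the error terms degrade when $r_{\tau(j)}$ is allowed to be as small as a negative power of $N$ (namely $r_{\tau(j)} \gtrsim r_{m_N/N} = O((m_N/N)^{1/2}) = O(N^{(\epsilon-1)/2})$). As before, it suffices to treat $h_j$; the estimate for $\widetilde h_j$ follows by replacing $N$ with $2N$ throughout. First I would write $h_j = \int_0^\infty e^{-NV_{\tau(j)}(r)}\,2r\,dr$ and split the integral at $|r - r_{\tau(j)}| = \delta_N' := r_{\tau(j)}\log N / \sqrt{Nr_{\tau(j)}^2\Lap Q(r_{\tau(j)})}$ or a comparable $j$-dependent window adapted to the curvature $V_{\tau(j)}''(r_{\tau(j)}) = 4\Lap Q(r_{\tau(j)})$ (which stays bounded below by strict subharmonicity). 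Using that $rq'(r)$ is increasing, so $V_{\tau(j)}$ is unimodal with global minimum at $r_{\tau(j)}$, the tail integral over $|r - r_{\tau(j)}| > \delta_N'$ is bounded exactly as in \eqref{ex_error}: factoring out $e^{-NV_{\tau(j)}(r_{\tau(j)})}$, using the growth control \eqref{M choose} to absorb $M$ copies of the potential, and the quadratic lower bound on $V_{\tau(j)}(r) - V_{\tau(j)}(r_{\tau(j)})$ on the complement of the window, one gets a super-polynomially small relative error $O(e^{-c(\log N)^2})$ uniformly in $j$ in the stated range.

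For the central part, I would Taylor-expand $V_{\tau(j)}$ around $r_{\tau(j)}$ and rescale $r \mapsto r_{\tau(j)} + s/\sqrt{N}$, exactly as in \eqref{hj asymptotic 1}, obtaining a Gaussian integral with weight $e^{-2\Lap Q(r_{\tau(j)})s^2}$ times correction factors built from $N^{-1/2}V_{\tau(j)}^{(3)}(r_{\tau(j)})s^3$, $N^{-1}V_{\tau(j)}^{(4)}(r_{\tau(j)})s^4$, and the Jacobian factor $1 + s/(r_{\tau(j)}\sqrt{N})$. The key point is to use the explicit formulas \eqref{V tau diff} for the derivatives: each appearance of $1/r_{\tau(j)}$, $1/r_{\tau(j)}^2$, $1/r_{\tau(j)}^3$ in $V_{\tau(j)}^{(3)}, V_{\tau(j)}^{(4)}$ (note $V_{\tau(j)}'(r_{\tau(j)})=0$, which kills the worst terms) is weighed against powers of $N^{-1/2}$. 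Since $r_{\tau(j)} \asymp \sqrt{j/N}$ up to constants, one has $1/(r_{\tau(j)}\sqrt{N}) \asymp 1/\sqrt{j}$; carefully bookkeeping, the leading correction is still $\tfrac{1}{N}\mathfrak B_1(r_{\tau(j)})$ as in Lemma~\ref{Lem_hj asymptotic}, and the next-order remainder, which in the $r_0>0$ case was $O(N^{-2})$, now becomes $O(j^{-3/2}(\log N)^\alpha)$: the worst contribution comes from terms like $N^{-3/2}r_{\tau(j)}^{-3}$ from $V_{\tau(j)}^{(3)}$-type corrections at the next order, which is $O((j/N)^{-3/2}N^{-3/2}) = O(j^{-3/2})$, times possible logarithmic factors from the width $\delta_N'$ of the integration window. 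Taking logarithms of the resulting multiplicative expansion $h_j = N^{-1/2}e^{-NV_{\tau(j)}(r_{\tau(j)})}(2\pi r_{\tau(j)}^2/\Lap Q(r_{\tau(j)}))^{1/2}(1 + \tfrac1N\mathfrak B_1(r_{\tau(j)}) + O(j^{-3/2}(\log N)^\alpha))$ and using $\log(1+x) = x + O(x^2)$ yields the claimed formula; note $\tfrac1N\mathfrak B_1(r_{\tau(j)}) = O(N^{-1}r_{\tau(j)}^{-2}) = O(j^{-1})$, which is consistent with an overall error $O(j^{-3/2}(\log N)^\alpha)$ being the genuine next term.

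The main obstacle I anticipate is the uniform-in-$j$ control of all the error terms down to $j = m_N$, where $r_{\tau(j)}$ is smallest and every negative power of $r_{\tau(j)}$ is largest. Concretely, one must verify that the cubic-and-higher Taylor remainder $O(N^{-3/2}s^5)$ in the exponent, once integrated against the Gaussian over the window $|s| \le \sqrt{N}\delta_N'$, together with all cross terms in the Taylor expansion of the exponential correction factor, really does produce an error no worse than $O(j^{-3/2}(\log N)^\alpha)$ — this requires the estimate $r_{\tau} = O(\tau^{1/2})$ and its companion lower bound $r_\tau \gtrsim \tau^{1/2}$ near $\tau = 0$ (both following from strict subharmonicity of $Q$ near the origin, i.e. $q''(0) > 0$), and a careful matching of the window width to $\sqrt{N}r_{\tau(j)}^2 \Lap Q(r_{\tau(j)}) \asymp \sqrt{j}$. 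Once the window and the rescaling are chosen so that the Gaussian exponent is $O(1)$-nondegenerate and the effective small parameter in the expansion is $j^{-1/2}$ rather than $N^{-1/2}$, the computation parallels Lemma~\ref{Lem_hj asymptotic} term by term, and I would present only the changes in the error bookkeeping rather than redoing the Gaussian moment computations.
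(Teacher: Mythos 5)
Your proposal is correct and matches the paper's own proof in all essentials: split at a window of width $\asymp N^{-1/2}\log N$ (your $\delta_N'=\log N/\sqrt{N\Lap Q(r_{\tau(j)})}$ reduces to the paper's fixed $\delta_N=N^{-1/2}\log N$ once one notes $\Lap Q$ is bounded above and below), bound the tail as in the annulus case, Taylor-expand and rescale on the central window, and track how the negative powers of $r_{\tau(j)}\asymp\sqrt{j/N}$ in $V_{\tau(j)}^{(d)}(r_{\tau(j)})$ (which satisfy $|V_{\tau(j)}^{(d)}(r_{\tau(j)})|\lesssim 1+\tau^{1-d/2}$) degrade the remainder to $O(j^{-3/2}(\log N)^\alpha)$ while leaving $\tfrac1N\mathfrak B_1(r_{\tau(j)})=O(j^{-1})$ as the genuine subleading term.
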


\begin{proof}
This lemma can be shown in a similar way to Lemma~\ref{Lem_hj asymptotic}. Recall that $r_{\tau}$ satisfies $r_{\tau}=O(\tau^{\frac{1}{2}})$ as $\tau \to 0$.
Note that for $d\ge1$
\begin{equation}
  |V_{\tau}^{(d)}(r_{\tau})| = |q^{(d)}(r_{\tau}) + (-1)^{d} \, 2\tau\, (d-1)! \, r_{\tau}^{-d}| \leq C_1(1+ \tau^{-\frac{d}{2}+1}),
\end{equation}
where $C_1>0$ is a constant that can be taken uniformly for all $\tau$. We now split the integral for $h_j$ by
\begin{align}
  h_j &= \int_{0}^{\infty} 2r^{2j+1} e^{-N q(r)} \, dr = \int_{0}^{\infty} 2r e^{-N V_{\tau(j)}(r)} \, dr \\
  &= \int_{r_{\tau(j)}-\delta_N}^{r_{\tau(j)}+\delta_N} 2r e^{-NV_{\tau(j)}(r)}\, dr + \int_{|r-r_{\tau(j)}|>\delta_N} 2r e^{-NV_{\tau(j)}(r)}\, dr 
\end{align}
and first compute the integral over the outer region. 
For $m_N \leq j < N$, we have 
\begin{equation*}
  r_{\tau(j+\frac{1}{2})} - r_{\tau(j)} = O((jN)^{-\frac{1}{2}}).
\end{equation*}
Since $V_{\tau}$ is increasing in $(r_{\tau},\infty)$, the Taylor series expansion for $V_{\tau}$ gives 
\begin{align}\label{Vtauh:1}
\begin{split}
    V_{\tau(j+\frac{1}{2})}(r) 
  &\geq V_{\tau(j+\frac{1}{2})}(r_{\tau(j)}+\delta_N) \\
  &= V_{\tau(j+\frac{1}{2})}(r_{\tau(j+\frac{1}{2})}) + \frac{1}{2}V''_{\tau(j+\frac{1}{2})}(r_{\tau(j+\frac{1}{2})})\big(r_{\tau(j)} + \delta_N - r_{\tau(j+\frac{1}{2})} \big)^2 + O(\tau(j)^{-\frac{1}{2}}\delta_N^3)
\end{split}
\end{align}
for all $r>r_{\tau(j)}+\delta_N$. Here, $O(\tau(j)^{-\frac{1}{2}} \delta_N^3) = O(N^{-1}j^{-\frac{1}{2}}(\log N)^3)$. Similarly, since $V_{\tau}$ is decreasing in $(0,r_\tau)$, we have 
\begin{align}\label{Vtauh:1-1}
\begin{split}
    V_{\tau(j+\frac{1}{2})}(r) 
  &\geq V_{\tau(j+\frac{1}{2})}(r_{\tau(j)}-\delta_N) \\
  &= V_{\tau(j+\frac{1}{2})}(r_{\tau(j+\frac{1}{2})}) + \frac{1}{2}V''_{\tau(j+\frac{1}{2})}(r_{\tau(j+\frac{1}{2})})\big(r_{\tau(j)} - \delta_N - r_{\tau(j+\frac{1}{2})} \big)^2 + O(\tau(j)^{-\frac{1}{2}}\delta_N^3)
\end{split}
\end{align}
for all $r< r_{\tau(j)}-\delta_N$. Using the Taylor series for $V_{\tau}$ again, we have 
\begin{align}\label{Vtauh:2}
\begin{split}
  V_{\tau(j+\frac{1}{2})}(r_{\tau(j+\frac{1}{2})}) - V_{\tau(j)}(r_{\tau(j)}) 
  &= V_{\tau(j)}(r_{\tau(j+\frac{1}{2})}) - V_{\tau(j)}(r_{\tau(j)}) - \frac{1}{N}\log r_{\tau(j+\frac{1}{2})} \\ 
  &=O((jN)^{-1}) - \frac{1}{N}\log r_{\tau(j+\frac{1}{2})}.
\end{split}
\end{align}
Thus, it follows from \eqref{Vtauh:1}, \eqref{Vtauh:1-1}, and \eqref{Vtauh:2} that 
\begin{align}\label{int:outer}
\begin{split}
  &\quad e^{NV_{\tau(j)}(r_{\tau(j)})}
  \int_{|r-r_{\tau(j)}|>\delta_N} 2r^{2j+1} e^{-Nq(r)}\,dr 
  = \int_{|r-r_{\tau(j)}|>\delta_N} 2 e^{-N \big(V_{\tau(j+\frac{1}{2})}(r) - V_{\tau(j)}(r_{\tau(j)})\big)} \,dr \\ 
  &\leq e^{-c_1(\log N)^2} e^{\frac{N-M}{N}\log r_{\tau(j+\frac{1}{2})}} \int_{|r-r_{\tau(j)}|>\delta_N} 2 e^{-M\big(V_{\tau(j+\frac{1}{2})}(r) - V_{\tau(j)}(r_{\tau(j)}) \big)}\,dr = O(e^{-c_2(\log N)^2})
\end{split}
\end{align}
 for some $c_1, c_2 >0$. Here $M$ is given by \eqref{M choose}.
 For the integral near the critical point $r_{\tau(j)}$, we use the Taylor series expansion to obtain
\begin{align*}
  &\quad \int_{r_{\tau(j)}-\delta_N}^{r_{\tau(j)}+\delta_N} 2r e^{-NV_{\tau(j)}(r)}\, dr \\
  & = e^{-NV_{\tau(j)}(r_{\tau(j)})}\int_{-\delta_N}^{\delta_N} 2(r_{\tau(j)}+t) \,e^{-N ( \frac{1}{2}V_{\tau(j)}''(r_{\tau(j)})t^2 + \frac{1}{6}V_{\tau(j)}^{(3)}(r_{\tau(j)}) t^3 + \frac{1}{24}V_{\tau(j)}^{(4)}(r_{\tau(j)})t^4 + O(\tau(j)^{-\frac{3}{2}}|t|^5))}  dt \\
  &=e^{-NV_{\tau(j)}(r_{\tau(j)})}\frac{1}{\sqrt{N}}\int_{-\sqrt{N}\delta_N}^{\sqrt{N}\delta_N} 2 e^{-2\Lap Q(r_{\tau(j)})t^2 }  \,\Big(r_{\tau(j)}+\frac{t}{\sqrt{N}}\Big) \\
  & \hspace{10em} \Big( 1 - \frac{V_{\tau(j)}^{(3)}(r_{\tau(j)})}{6\sqrt{N}} t^3 - \frac{V_{\tau(j)}^{(4)}(r_{\tau(j)})}{24 N} t^4 + \frac{(V_{\tau(j)}^{(3)}(r_{\tau(j)}))^2}{72N} t^6 + \epsilon'_{N,1} \Big) \,dt
\end{align*} 
where $\epsilon'_{N,1}=O(j^{-\frac{3}{2}}(\log N)^{\alpha})$ for some $\alpha>0$.
Combining the all of the above, we obtain
\begin{align*}
  h_j = e^{-NV_{\tau(j)}(r_{\tau(j)})}
  \Big[ \frac{1}{\sqrt{N}} \Big(\frac{2\pi r_{\tau(j)}^2}{\Lap Q(r_{\tau(j)})}\Big)^{\frac{1}{2}}\Big(1+\frac{1}{N}\mathfrak{B}_1(r_{\tau(j)})+\epsilon_{N,1}\Big) + \epsilon_{N,2}
  \Big]
\end{align*}
where $\epsilon_{N,1}=O(j^{-\frac{3}{2}}(\log N)^{\alpha})$ and $\epsilon_{N,2} = O(e^{-c_2(\log N)^2})$. 
\end{proof}

\subsection{Random normal matrix ensemble}

In this subsection, we show Theorem~\ref{Thm_ZN disc} (i). 
According to the asymptotic expansions of $h_j$ given in Lemmas~\ref{Lem_hj lower} and ~\ref{Lem_hj higher}, we analyze the summation in \eqref{ZN random normal symplectic} through the decomposition 
\begin{equation} \label{sum hj decomp}
 \sum_{j=0}^{N-1} \log h_j = \sum_{j=0}^{m_N-1} \log h_j + \sum_{j=m_N}^{N-1} \log h_j. 
\end{equation}
The asymptotic behaviors of each summation on the right-hand side of \eqref{sum hj decomp} are given in Lemma~\ref{Lem_sum of log hj lower} and \ref{Lem_sum of log hj higher}, respectively.

\begin{lem} \label{Lem_sum of log hj lower}
As $N\to \infty$, we have
\begin{align*}
  \sum_{j=0}^{m_N-1} \log h_j & = - m_N N q(0) - \frac{m_N(m_N+1)}{2}\Big(\log N + \log \big(\frac{1}{2}q''(0)\big)\Big) 
  \\
  &\quad +\frac{1}{2}m_N^2\log m_N - \frac{3}{4}m_N^2 + \frac{\log(2\pi)}{2}m_N - \frac{1}{12}\log m_N + \zeta'(-1) + O(m_N^{-2}+N^{-\frac{1}{2}(1-5\epsilon)}(\log N)^3).
\end{align*}
\end{lem}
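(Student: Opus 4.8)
The plan is to combine the small-degree asymptotic from Lemma~\ref{Lem_hj lower} with an Euler--Maclaurin analysis of the resulting sum, and then to extract the constant term via the Barnes $G$-function. Concretely, by Lemma~\ref{Lem_hj lower} we have, for $0\le j\le m_N-1$,
\begin{equation*}
\log h_j = -Nq(0) - (j+1)\log\!\Big(\tfrac{N}{2}q''(0)\Big) + \log j! + O\big(N^{-\frac12}(j+1)^{\frac32}(\log N)^3\big),
\end{equation*}
so summing over $j$ from $0$ to $m_N-1$ produces four contributions: the trivial $-m_N N q(0)$; the linear-in-$j$ piece $-\log\big(\tfrac{N}{2}q''(0)\big)\sum_{j=0}^{m_N-1}(j+1) = -\tfrac{m_N(m_N+1)}{2}\log\big(\tfrac{N}{2}q''(0)\big)$; the sum $\sum_{j=0}^{m_N-1}\log j! = \log G(m_N+1)$ of the Barnes $G$-function; and an error term $O\big(N^{-\frac12}\sum_{j\le m_N}(j+1)^{3/2}(\log N)^3\big) = O\big(N^{-\frac12} m_N^{5/2}(\log N)^3\big) = O\big(N^{-\frac12(1-5\epsilon)}(\log N)^3\big)$ using $m_N=N^\epsilon$. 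First I would record these four pieces cleanly.

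The main analytic step is then to insert the asymptotic expansion \eqref{Barnes G asymp} of $\log G(z+1)$ with $z=m_N$. This gives
\begin{equation*}
\log G(m_N+1) = \frac{m_N^2\log m_N}{2} - \frac34 m_N^2 + \frac{\log(2\pi)}{2}m_N - \frac{\log m_N}{12} + \zeta'(-1) + O(m_N^{-2}),
\end{equation*}
and since $m_N=N^\epsilon\to\infty$ the error $O(m_N^{-2})$ is a genuine $o(1)$ term, which is exactly what is needed for an expansion up to and including the $O(1)$-constant. Substituting this into the sum from the previous paragraph, and combining the $-\tfrac12 m_N(m_N+1)\log N$ and $-\tfrac12 m_N(m_N+1)\log\big(\tfrac12 q''(0)\big)$ terms, immediately yields the claimed formula, with the $\tfrac12 m_N^2\log m_N$, $-\tfrac34 m_N^2$, $\tfrac{\log(2\pi)}{2}m_N$, $-\tfrac1{12}\log m_N$, and $\zeta'(-1)$ terms coming directly from the Barnes expansion. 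The total error is the maximum of the two sources, $O\big(m_N^{-2} + N^{-\frac12(1-5\epsilon)}(\log N)^3\big)$, as stated.

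I do not expect any serious obstacle here: the proof is essentially a bookkeeping exercise once Lemma~\ref{Lem_hj lower} and the standard Barnes $G$-function asymptotics \eqref{Barnes G asymp} are in hand. The one point requiring a little care is the propagation of the error term: one must check that $\sum_{j=0}^{m_N-1}(j+1)^{3/2}$ is genuinely $O(m_N^{5/2})$ (immediate by comparison with $\int_0^{m_N}x^{3/2}\,dx$) and that the constraint $0<\epsilon<1/5$ forces $N^{-\frac12(1-5\epsilon)}\to 0$, so that this term is indeed negligible at the order at which we are working; these are precisely the reasons behind the hypotheses on $m_N$ fixed at the start of the section. A secondary bookkeeping check is that $\log j!$ for $j=0$ contributes $0$ and is consistent with the convention $G(1)=1$, so the identification $\sum_{j=0}^{m_N-1}\log j! = \log G(m_N+1)$ holds exactly. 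With these verifications the statement follows.
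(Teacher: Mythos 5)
Your proof is correct and follows essentially the same route as the paper: sum the expansion from Lemma~\ref{Lem_hj lower}, identify $\sum_{j=0}^{m_N-1}\log j!$ with $\log G(m_N+1)$ via $G(z+1)=\Gamma(z)G(z)$ and $G(1)=1$, and insert the Barnes $G$-function expansion~\eqref{Barnes G asymp}. The only small blemish is your opening remark about an ``Euler--Maclaurin analysis''---no Euler--Maclaurin summation is actually needed or used here, since the sum of $\log j!$ is an exact identity with the Barnes $G$-function, and your subsequent execution correctly proceeds that way.
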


\begin{proof}
By Lemma~\ref{Lem_hj lower}, we have
\begin{align*}
  \sum_{j=0}^{m_N-1} \log h_j = - m_N N q(0) - \frac{m_N(m_N+1)}{2}\Big(\log N + \log \big(\frac{1}{2}q''(0)\big)\Big) + \log G(m_N+1) + O(N^{-\frac{1}{2}(1-5\epsilon)}(\log N)^3),
\end{align*}
where $G$ is the Barnes $G$-function that can be defined recursively by 
\begin{equation} \label{Barnes G def}
G(z+1)=\Gamma(z)G(z),\qquad G(1)=1. 
\end{equation}
Now lemma follows from \eqref{Barnes G asymp}. 
\end{proof}

\begin{lem}\label{Sum:Vtau disc}
As $N\to \infty$, we have
\begin{align*}
  \sum_{j=m_N}^{N-1}V_{\tau(j)}(r_{\tau(j)}) 
  &= N I_{Q}[\mu_Q]-U_{\mu_Q}(0) - \frac{1}{6N}\log r_1 
  \\
  &\quad - m_Nq(0) - \frac{3}{4}\frac{m_N^2}{N} + \frac{1}{2N}\Big(m_N^2-m_N +\frac{1}{6}\Big)\log \Big(\frac{m_N}{N\Lap Q(0)}\Big) + \frac{m_N}{2N} + O(N^{-\frac{1}{2}(3-5\epsilon)}). 
\end{align*}
\end{lem}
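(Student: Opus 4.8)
The plan is to apply the Euler--Maclaurin formula \eqref{EMF} to the function $t \mapsto V_{\tau(t)}(r_{\tau(t)})$ on the interval $[m_N, N]$, and then combine the result with the already-established full-sum asymptotics from Lemma~\ref{Sum:Vtau}. More precisely, writing $f(t) := V_{\tau(t)}(r_{\tau(t)})$, I would use the splitting
\begin{equation*}
\sum_{j=m_N}^{N-1} f(j) = \sum_{j=0}^{N-1} f(j) - \sum_{j=0}^{m_N-1} f(j),
\end{equation*}
where the first sum is handled by Lemma~\ref{Sum:Vtau}. For the second sum $\sum_{j=0}^{m_N-1} f(j)$, since the $r_{\tau(j)}$ with $j < m_N = N^\epsilon$ are of order $r_{\tau(j)} = O((j/N)^{1/2}) \ll N^{-(1-\epsilon)/2}$, I cannot use the regular (interior) asymptotics; instead I would Taylor-expand $V_{\tau}(r_\tau)$ directly near the origin. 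Using $q(r) = q(0) + \tfrac12 q''(0) r^2 + O(r^3)$ and $r_\tau^2 \sim \tau / (\tfrac12 q''(0)) = \tau/(2\Lap Q(0))$ (which follows from \eqref{r tau eq} and \eqref{Delta Q q' q''}), one gets
\begin{equation*}
V_{\tau(j)}(r_{\tau(j)}) = q(0) - \tfrac{j}{N}\log\Big(\tfrac{j}{N \Lap Q(0)}\Big) + \tfrac{j}{N} + O\big((j/N)^{3/2}\big),
\end{equation*}
at least to the order needed. Summing this over $j = 0, \dots, m_N-1$ — with $\sum j = \tfrac12 m_N(m_N-1)$, $\sum j \log j = $ (via Euler--Maclaurin or Stirling for $\log G$) contributing $\tfrac12 m_N^2 \log m_N - \tfrac14 m_N^2 + \tfrac{1}{12}\log m_N + O(1)$-type terms, etc. — should reproduce the stated expression. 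The appearance of the combination $m_N^2 - m_N + \tfrac16$ is the telltale sign of an Euler--Maclaurin correction ($B_2 = \tfrac16$) applied to $\sum j^2$-type or $\sum j\log j$-type sums, so the computation must be organized to keep track of the $\tfrac{1}{12}$ boundary derivative term in \eqref{EMF}.

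Concretely, the key steps in order would be: (1) invoke Lemma~\ref{Sum:Vtau} to get $\sum_{j=0}^{N-1} V_{\tau(j)}(r_{\tau(j)}) = N I_Q[\mu_Q] - U_{\mu_Q}(0) + \tfrac{1}{6N}\log(r_0/r_1) + O(N^{-3})$, but with $r_0 = 0$ this needs care — in fact for the disc case one should re-derive the full sum or, better, directly estimate $\sum_{j=m_N}^{N-1}$ by Euler--Maclaurin on $[m_N, N]$ treating $m_N$ as the lower endpoint, since the $\log r_0$ term is precisely what becomes singular; (2) apply \eqref{EMF} to $f(t) = V_{\tau(t)}(r_{\tau(t)})$ with $l = 1$ (or $l=2$) on $[m_N, N]$, using $B_2 = 1/6$; (3) evaluate $\int_{m_N}^N f(t)\,dt$ via the change of variables $s = r_{\tau(t)}$ and $dr/d\tau = 1/(2r\Lap Q(r))$ from \eqref{drdtau}, exactly as in the proof of Lemma~\ref{Sum:Vtau}, which produces $N I_Q[\mu_Q]$ minus the contribution of the missing piece $\int_0^{m_N}$, the latter being computed through the near-origin expansion and yielding the $-m_N q(0) - \tfrac34 \tfrac{m_N^2}{N} + \dots$ terms; (4) evaluate the boundary terms $\tfrac12(f(m_N) - f(N))$ and $\tfrac{1}{12}(f'(N) - f'(m_N))$; the $f(N)$ and $f'(N)$ contributions give $-U_{\mu_Q}(0)$ and $-\tfrac{1}{6N}\log r_1$ (using $r_1 q'(r_1) = 2$, $q'(r_0)$-analogue, and \eqref{potential radially sym}), while the $f(m_N)$ and $f'(m_N)$ contributions, combined with the truncated integral, assemble into the remaining $m_N$-dependent terms; (5) track all error terms and verify they are $O(N^{-(3-5\epsilon)/2})$ given $m_N = N^\epsilon$ with $\epsilon < 1/5$.

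The main obstacle I anticipate is the bookkeeping in step (4)–(5): the coefficient $\tfrac12(m_N^2 - m_N + \tfrac16)$ in front of $\log(m_N/(N\Lap Q(0)))$ must emerge from the interplay of (a) the truncated integral $\int_0^{m_N} f(t)\,dt$ expanded near the origin, (b) the boundary term $-\tfrac12 f(m_N)$ from Euler--Maclaurin, and (c) the derivative boundary term $\tfrac{1}{12} f'(m_N)$ — and getting the $-m_N$ and $+\tfrac16$ pieces right requires expanding $V_{\tau(j)}(r_{\tau(j)})$ near $r_\tau = 0$ to one order beyond the naive leading term, i.e. controlling the $O((j/N)^{3/2})$ remainder and its summation uniformly. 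A related subtlety is that $\Lap Q(r_\tau) \to \Lap Q(0)$ as $\tau \to 0$ but with a correction, so terms like $\log \Lap Q(r_{\tau(j)})$ versus $\log \Lap Q(0)$ differ by $O(r_{\tau(j)}) = O((j/N)^{1/2})$, which when summed over $j \le m_N$ gives an $O(N^{-1/2} m_N^{3/2})$ contribution — this must be shown to be absorbed in the stated error, and indeed $N^{-1/2} m_N^{3/2} = N^{-(1-3\epsilon)/2}$, which for $\epsilon < 1/5$ is comfortably smaller than $N^{-(3-5\epsilon)/2}$... wait, one should double check: $N^{-(1-3\epsilon)/2}$ vs $N^{-(3-5\epsilon)/2}$ — the latter exponent is more negative iff $3 - 5\epsilon > 1 - 3\epsilon$ iff $2 > 2\epsilon$ iff $\epsilon < 1$, so yes it is fine. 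The consistency of the final $-\tfrac{1}{6N}\log r_1$ term with the $r_0 \to$ (formal) limit of Lemma~\ref{Sum:Vtau} also deserves a remark, since the disc case genuinely differs from the annulus case here.
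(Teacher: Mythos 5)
Your steps (2)--(5) coincide with the paper's proof: Euler--Maclaurin on $[m_N,N]$, change of variables $s=r_{\tau(t)}$ to express the main integral as $NI_Q[\mu_Q]$ minus the $\int_0^{m_N}$ piece, Taylor expansion of $q$ and $r_\tau$ near the origin (via $r_\tau^2=\tau/\Lap Q(0)+O(\tau^{3/2})$ and $q(r_\tau)=q(0)+\tau+O(\tau^{3/2})$) for that piece and for the boundary data at $t=m_N$, and evaluation at $t=N$ using $r_1q'(r_1)=2$ and \eqref{potential radially sym}. The bookkeeping you flag in steps (4)--(5) --- that the coefficient $\tfrac12(m_N^2-m_N+\tfrac16)$ assembles from the truncated integral (the $m_N^2$ part), the boundary term $-\tfrac12 f(m_N)$ (the $-m_N$ part), and the derivative term $\tfrac1{12}f'(m_N)$ (the $+\tfrac16$ part, with $B_2=\tfrac16$) --- is exactly how the paper organizes it. Your self-correction away from the Lemma~\ref{Sum:Vtau}-based split is also right, since that lemma is proved under the standing assumption $r_0>0$ of Section~\ref{Section_annulus} and its $\tfrac1{6N}\log(r_0/r_1)$ term is undefined when $r_0=0$.

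One small caveat: the closing aside about $\log\Lap Q(r_{\tau(j)})$ errors is misplaced --- $V_\tau(r)=q(r)-2\tau\log r$ contains no $\Lap Q$, so no such sum arises in this lemma (it belongs to the analysis around Lemma~\ref{Lem_sum of Lap Q r disc}) --- and, more to the point, the exponent comparison there is read backwards: since $3-5\epsilon>1-3\epsilon$ for $\epsilon<1$, the exponent $-(3-5\epsilon)/2$ is the \emph{more} negative one, so $N^{-(1-3\epsilon)/2}$ would dominate, not be absorbed by, $O(N^{-(3-5\epsilon)/2})$. The error that actually controls the present lemma is the $O(\tau^{3/2})$ remainder in the near-origin expansion of $V_\tau(r_\tau)$; integrating it over the missing piece $[0,m_N]$ yields $O(m_N^{5/2}N^{-3/2})=O(N^{-(3-5\epsilon)/2})$, which is exactly the stated bound.
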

\begin{proof}
By applying the Euler-Maclaurin formula \eqref{EMF}, we have
\begin{align}
\begin{split}
  \sum_{j=m_N}^{N-1} V_{\tau(j)}(r_{\tau(j)}) & =  \int_{m_N}^{N} V_{\tau(t)}(r_{\tau(t)}) \,dt - \frac{1}{2}\Big(V_{\tau(N)}(r_{\tau(N)}) - V_{\tau(m_N)}(r_{\tau(m_N)})\Big) 
  \\
  &\quad + \frac{1}{12}\Big(\d_t (V_{\tau(t)}(r_{\tau(t)}))\big|_{t=N} - \d_t ( V_{\tau(t)}(r_{\tau(t)}))\big|_{t=m_N}\Big) + O(N^{-1-2\epsilon}). \label{hj higher 0} 
\end{split}
\end{align}
Here we have used $\d_t^3 (V_{\tau(t)}(r_{\tau(t)}))\big|_{t=m_N} = O(N^{-3} (\tau(m_N))^{-2}) = O(m_N^{-2}N^{-1})$ and $B_2=1/6$. 
By the change of variables $s = r_{\tau(t)}$ and the formula \eqref{energy radially sym} of $I_Q[\mu_Q]$, we obtain 
\begin{align}
 &\quad  \int_{m_N}^N V_{\tau(t)}(r_{\tau(t)})\, dt 
  = 2N\int_{r_{\tau(m_N)}}^{r_1} (q(s) - sq'(s)\log s) s\Lap Q(s) \,ds \\
  & = N\Big( \frac{1}{2}\int_{S \setminus S_{\tau(m_N)}}\!\!\! Q \cdot \Lap Q \, dA - \log r_1 + (\tau(m_N))^2\log r_{\tau(m_N)}+ \frac{1}{2}(q(r_{1}) - \tau(m_N)\cdot q(r_{\tau(m_N)})) \Big)\\
  &= N I_{Q}[\mu_Q] - \frac{N}{2}\int_{S_{\tau(m_N)}} Q \cdot \Lap Q dA + \frac{m_N^2}{N}\log r_{\tau(m_N)} - \frac{m_N}{2}q(r_{\tau(m_N)}).
\end{align}
Observe here that 
\begin{equation}
r_{\tau} = \Big(\frac{2\tau}{q''(0)}\Big)^{\frac{1}{2}} + O(\tau) = \Big(\frac{\tau}{\Lap Q(0)}\Big)^{\frac{1}{2}} + O(\tau)\qquad \textup{as } \tau \to 0. \label{r tau asymp}
\end{equation}
Thus we have
\begin{align}
  \log r_{\tau(m_N)} &= \frac{1}{2} \log \Big( \frac{ \tau(m_N) }{ \Lap Q(0) } \Big) + O(\tau(m_N)^{\frac{1}{2}}) = \frac{1}{2} \log \Big( \frac{ \tau(m_N) }{ \Lap Q(0) } \Big)+ O(N^{-\frac{1}{2}(1-\epsilon)}), \label{log r tau asymp}
  \\ 
  q(r_{\tau(m_N)}) &= q(0) + \frac{1}{2}q''(0)(r_{\tau(m_N)})^2 + O((\tau(m_N))^{\frac{3}{2}}) = q(0) + \tau(m_N) + O(N^{-\frac{3}{2}(1-\epsilon)}), \label{q r tau asymp}
\end{align}
and 
\begin{align*}
  \frac{1}{2}\int_{S_{\tau(m_N)}} Q\cdot \Lap Q \,dA 
  &= \frac{1}{4} \int_{0}^{r_{\tau(m_N)}} q(s) \cdot (sq'(s))' \,ds 
  = \frac{1}{2} \tau(m_N)\cdot q(r_{\tau(m_N)}) - \frac{1}{4}\int_{0}^{r_{\tau(m_N)}} s (q'(s))^2 \,ds \\
  &= \frac{1}{2} \tau(m_N)\cdot q(r_{\tau(m_N)}) - \frac{1}{4}(q''(0))^2 \int_{0}^{r_{\tau(m_N)}} s^3 \,ds + O(r_{\tau(m_N)}^5) \\
  &= \frac{1}{2} \tau(m_N)\cdot q(r_{\tau(m_N)}) - \frac{1}{4} \tau(m_N)^2 + O(N^{-\frac{5}{2}(1-\epsilon)}).
\end{align*}
Combining all of the above asymptotic expansions, we obtain
\begin{align}
\begin{split}
  \int_{m_N}^N V_{\tau(t)}(r_{\tau(t)})\, dt 
  &= NI_{Q}[\mu_Q] - m_N q(r_{\tau(m_N)}) + \frac{1}{4} \frac{m_N^2}{N} + \frac{m_N^2}{N}\log r_{\tau(m_N)} + O(N^{-\frac{1}{2}(3-5\epsilon)}) 
  \\
  &= NI_{Q}[\mu_{Q}] - m_N q(0) - \frac{3}{4}\,\frac{m_N^2}{N} + \frac{m_N^2}{2N}\log\Big( \frac{m_N}{N\Lap Q(0)}\Big) + O(N^{-\frac{1}{2}(3-5\epsilon)}). \label{hj higher 1}
\end{split}
\end{align}
Furthermore, it follows from the formula~\eqref{potential radially sym} of $U_{\mu_Q}(0)$, \eqref{log r tau asymp} and \eqref{q r tau asymp} that 
\begin{align}
\begin{split}
 &\quad  V_{\tau(N)}(r_{\tau(N)}) - V_{\tau(m_N)}(r_{\tau(m_N)}) 
  = q(r_1)- 2\log r_1 - q(r_{\tau(m_N)}) + 2\tau(m_N)\cdot\log r_{\tau(m_N)}
  \\
  &= q(r_1) - 2\log r_1 - q(0) - \frac{m_N}{N} + \frac{m_N}{N}\log \Big( \frac{m_N}{N\Lap Q(0)} \Big)+ O(N^{-\frac{3}{2}(1-\epsilon)}) \label{hj higher 2}
  \\
   &= 2 U_{\mu_Q}(0)- \frac{m_N}{N} + \frac{m_N}{N}\log \Big( \frac{m_N}{N\Lap Q(0)} \Big)+ O(N^{-\frac{3}{2}(1-\epsilon)}).
\end{split}
\end{align} 
Similarly, we have
\begin{align}
\begin{split}
  \d_t (V_{\tau(t)}(r_{\tau(t)}))\big|_{t=N} - \d_t ( V_{\tau(t)}(r_{\tau(t)}))\big|_{t=m_N} 
  &= \frac{2}{N}(\log{r_{\tau(m_N)}}-\log{r_1})\\
  &= \frac{1}{N}\log \Big( \frac{m_N}{N\Lap Q(0)} \Big) - \frac{2}{N}\log r_1 + O(N^{-\frac{1}{2}(3-\epsilon)}). \label{hj higher 3}
\end{split}
\end{align}
Now lemma follows from \eqref{hj higher 0}, \eqref{hj higher 1}, \eqref{hj higher 2} and \eqref{hj higher 3}. 
\end{proof}

\begin{lem}\label{Lem_sum of Lap Q r disc}
As $N \to \infty$, we have
\begin{equation}
 \sum_{j=m_N}^{N-1} \log \Lap Q(r_{\tau(j)})  = NE_{Q}[\mu_Q] - \frac{1}{2} \log\Big( \frac{ \Lap Q(r_1) }{ \Lap Q(0) } \Big) - m_N\log \Lap Q(0) + O(N^{-\frac{1}{2}(1-\epsilon)}),
\end{equation}
and 
\begin{equation}
 \sum_{j=m_N}^{N-1} \log r_{\tau(j)}  = -NU_{\mu_Q}(0) + \frac{m_N}{2} - \frac{1}{2}\log r_1 - \Big(\frac{m_N}{2} -\frac{1}{4} \Big)\log \Big( \frac{m_N}{N\Lap Q(0)} \Big) + O(N^{-\epsilon}).
\end{equation}
\end{lem}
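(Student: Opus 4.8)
The plan is to mimic the proof of Lemma~\ref{Sum:LapQ}, now on the shortened interval $[m_N,N]$, using the Euler--Maclaurin formula \eqref{EMF} together with the small-$\tau$ behaviour \eqref{r tau asymp} of $r_\tau$ to recover the contribution of the removed degrees $j<m_N$. Writing $f(t)=\log\Lap Q(r_{\tau(t)})$ (resp.\ $f(t)=\log r_{\tau(t)}$) and using $B_2=1/6$, I would start from
\begin{equation*}
\sum_{j=m_N}^{N-1}f(j)=\int_{m_N}^{N}f(t)\,dt-\tfrac12\bigl(f(N)-f(m_N)\bigr)+\tfrac1{12}\bigl(f'(N)-f'(m_N)\bigr)+R,
\end{equation*}
where $R$ and the higher Euler--Maclaurin corrections will turn out to be negligible. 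For the principal integral, the substitution $s=r_{\tau(t)}$, $dt=N\,d\tau$, $d\tau=2s\,\Lap Q(s)\,ds$ (from \eqref{drdtau}) gives $\int_{m_N}^{N}f\,dt=N\int_{r_{\tau(m_N)}}^{r_1}\widetilde f(s)\,2s\,\Lap Q(s)\,ds$ with $\widetilde f=\log\Lap Q$ (resp.\ $\widetilde f(s)=\log s$); splitting $\int_{r_{\tau(m_N)}}^{r_1}=\int_0^{r_1}-\int_0^{r_{\tau(m_N)}}$ turns the first piece into $N E_Q[\mu_Q]$ (resp.\ $-N U_{\mu_Q}(0)$, by \eqref{potential radially sym}), while the second piece is computed by inserting $\log\Lap Q(r_\tau)=\log\Lap Q(0)+O(\tau)$ (resp.\ $\log r_\tau=\tfrac12\log(\tau/\Lap Q(0))+O(\sqrt\tau)$, from \eqref{r tau asymp}), producing $m_N\log\Lap Q(0)$ (resp.\ the block $\tfrac{m_N}{2}\log\frac{m_N}{N\Lap Q(0)}-\tfrac{m_N}{2}$). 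The boundary term $-\tfrac12(f(N)-f(m_N))$ then yields $-\tfrac12\log\frac{\Lap Q(r_1)}{\Lap Q(0)}$ (resp.\ $-\tfrac12\log r_1+\tfrac14\log\frac{m_N}{N\Lap Q(0)}$) after the same substitutions.

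All the work is in the error estimates, and here the two sums behave differently. For $f=\log\Lap Q(r_{\tau(\cdot)})$ the integrand is in fact a smooth function of $\tau=t/N\in[0,1]$ --- indeed $\Lap Q(r_\tau)$ is a smooth function of $r_\tau^2$, which in turn is a smooth function of $\tau$ near $0$ by the implicit function theorem since $\Lap Q(0)>0$ --- so $f^{(d)}(t)=O(N^{-d})$ uniformly on $[m_N,N]$, the Euler--Maclaurin correction and remainder are $O(N^{-1})$ and smaller, and the only genuine error is the $O(N\tau(m_N)^2)=O(N^{2\epsilon-1})$ coming from truncating $\log\Lap Q(r_\tau)$ near the origin; since $\epsilon<1/5$ this is $O(N^{-(1-\epsilon)/2})$. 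For $f=\log r_{\tau(\cdot)}$ there is a logarithmic singularity at the origin, so $\partial_\tau^{d}\log r_\tau=O(\tau^{-d})$ and hence $f^{(d)}(t)=O(t^{-d})$; consequently $\tfrac1{12}f'(m_N)=O(m_N^{-1})$ and $R=O(m_N^{-1})$, both absorbed into $O(N^{-\epsilon})$, while truncation near the origin costs $O(N\tau(m_N)^{3/2})=O(N^{-(1-3\epsilon)/2})=O(N^{-\epsilon})$ for $\epsilon<1/5$. Collecting the bulk terms, the origin contributions, and the boundary terms then gives the two asserted identities.

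The main obstacle is not any single hard estimate but the bookkeeping: one must keep the genuinely present $O(m_N)$ and $O(\log(m_N/N))$ terms separate from the several error sources, and check that each error source is dominated by $N^{-(1-\epsilon)/2}$ (resp.\ $N^{-\epsilon}$) throughout the admissible range $0<\epsilon<1/5$, which is precisely where the constraint $\epsilon<1/5$ is used.
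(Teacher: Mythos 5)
Your proposal is correct and follows essentially the same route as the paper: apply the Euler--Maclaurin formula on $[m_N,N]$, convert the integral term to an integral over the droplet minus the small disc $S_{\tau(m_N)}$ (or equivalently over the annulus $S\setminus S_{\tau(m_N)}$ followed by an expansion in $\tau(m_N)$), substitute the small-$\tau$ expansions \eqref{r tau asymp}--\eqref{log r tau asymp} of $r_\tau$ to extract the $m_N$-dependent blocks, and bound the Euler--Maclaurin correction and truncation errors using $m_N=N^\epsilon$ with $\epsilon<1/5$. The only cosmetic difference is that for the second sum the paper computes $\int_{r_{\tau(m_N)}}^{r_1}$ directly by integration by parts and then expands the boundary contribution, whereas you subtract $\int_0^{r_{\tau(m_N)}}$ from the full integral; these are equivalent, and your slightly sharper boundary estimate $\log\Lap Q(r_\tau)=\log\Lap Q(0)+O(\tau)$ (valid since $\Lap Q$ is smooth and radially symmetric, hence even in $r$) is still compatible with the claimed $O(N^{-(1-\epsilon)/2})$.
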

\begin{proof}
Using the Euler-maclaurin formula \eqref{EMF}, 
\begin{align*}
  \sum_{j=m_N}^{N-1} \log \Lap Q(r_{\tau(j)}) 
  =& N\int_{S\setminus S_{\tau(m_N)}} \log \Lap Q\, d\mu_Q - \frac{1}{2} \log\Big( \frac{\Lap Q(r_1)}{ \Lap Q(r_{\tau(m_N)}) } \Big) 
  \\
  &+ \frac{1}{12}\big(\d_t \log \Lap Q (r_{\tau(t)}))\big|_{t=N} - \d_t \log \Lap Q (r_{\tau(t)}))\big|_{t=m_N}\big) + o(N^{-1}\tau(m_N)^{-\frac{1}{2}}).
\end{align*}
We verify from \eqref{r tau asymp} that
\begin{align*}
  \int_{S_{\tau(m_N)}} \log \Lap Q \,d\mu_Q 
  &= \Lap Q(0)\cdot \log\Lap Q(0) \cdot r_{\tau(m_N)}^2 + O(\tau(m_N)^3) = \frac{m_N}{N}\log\Lap Q(0) + O(N^{-3(1-\epsilon)})
\end{align*}
and 
\begin{align*}
  \d_{t} \log \Lap Q(r_{\tau(t)})\big|_{t=m_N} = \frac{1}{2N}\frac{\d_r \Lap Q(r_{\tau(m_N)})}{r_{\tau(m_N)}(\Lap Q (r_{\tau(m_N)}))^2} = O(N^{-1} \tau(m_N)^{-\frac{1}{2}}) = O(N^{-\frac{1}{2}(1+\epsilon)}).
\end{align*}
Observe that $\log \Lap Q(r_{\tau(m_N)}) = \log \Lap Q(0) + O(r_{\tau(m_N)})$. 
Combining above equations, we obtain the first assertion.
Similarly, by using the Euler-Maclaurin formula \eqref{EMF}, \eqref{log r tau asymp}, and \eqref{q r tau asymp}, we have 
\begin{align*}
  \sum_{j=m_N}^{N}\log r_{\tau(j)} 
  &= N\int_{S\setminus S_{\tau(m_N)}}\log|z|\, d\mu_{Q} - \frac{1}{2}\log\Big( \frac{ r_1 }{  r_{\tau(m_N)} } \Big) +O(m_N^{-1})\\
  &=\frac{N}{2}\Big(2\log r_1 - 2\tau(m_N)\log r_{\tau(m_N)} - q(r_1) + q(r_{\tau(m_N)})\Big) - \frac{1}{2}\log\Big( \frac{ r_1 }{  r_{\tau(m_N)} } \Big)
  \\
  &= N\Big(\log r_1 - \frac{q(r_1)-q(0)}{2} +\frac{1}{2}\tau(m_N) \Big) - \frac{1}{2}\log r_1 - \Big(\frac{m_N}{2} -\frac{1}{4}\Big)\log \Big( \frac{\tau(m_N)}{\Lap Q(0)}\Big) + O(N^{-\epsilon}),
\end{align*}
which completes the proof. 
\end{proof}

\begin{lem} \label{Lem_sum of B1 higher} 
As $N\to \infty$, we have
\begin{align*}
  \frac{1}{N}\sum_{j=m_N}^{N-1} \mathfrak{B}_1(r_{\tau(j)}) & = F_Q[\mathbb{D}_{r_1}] + \frac{1}{3}\log r_1 - \frac{1}{12}\log \Big( \frac{m_N}{N}\Big) -\frac{1}{4} \log \Big( \frac{ \Delta Q(r_1) }{ \Delta Q(0) } \Big)
  \\
  &\quad +\frac{1}{6}\log \Lap Q(0) + O(N^{-\epsilon}+N^{-\frac{1}{2}(1-\epsilon)}),
\end{align*}
where $F_Q[\mathbb{D}_{r_1}]$ is given in \eqref{FQ disc}. 
\end{lem}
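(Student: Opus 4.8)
The plan is to evaluate the Riemann sum $\frac{1}{N}\sum_{j=m_N}^{N-1}\mathfrak{B}_1(r_{\tau(j)})$ by the Euler--Maclaurin formula \eqref{EMF} and then reduce the resulting integral to $F_Q[\mathbb{D}_{r_1}]$ via Lemma~\ref{Lem_B1 integral} (or rather its disc analogue). Since $\mathfrak{B}_1$ carries a factor $1/N$ already, only the leading integral term of Euler--Maclaurin contributes at order $O(1)$: the boundary correction terms $\tfrac{1}{2N}(\mathfrak{B}_1(r_{\tau(N)})-\mathfrak{B}_1(r_{\tau(m_N)}))$ and the higher Bernoulli terms are each $O(N^{-1})$ near $r_1$ but potentially large near the origin because $\mathfrak{B}_1(r)$ blows up like $r^{-2}$ as $r\to 0$ (see the last term $\tfrac{1}{12r^2\Delta Q(r)}$ in \eqref{B1}). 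So the first step is to verify that at the cutoff $j=m_N$, using $r_{\tau(m_N)}=O((m_N/N)^{1/2})$ from \eqref{r tau asymp}, the contribution $\tfrac{1}{2N}\mathfrak{B}_1(r_{\tau(m_N)})= O(N^{-1}\cdot (N/m_N)) = O(m_N^{-1})=O(N^{-\epsilon})$, and similarly that the derivative terms are lower order; these are absorbed into the stated error.

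Next I would change variables $s=r_{\tau(t)}$ using $dr/d\tau = 1/(2r\Delta Q(r))$ from \eqref{drdtau} and $d\tau = dt/N$ to write
\begin{align*}
\frac{1}{N}\int_{m_N}^{N} \mathfrak{B}_1(r_{\tau(t)})\,dt = \int_{r_{\tau(m_N)}}^{r_1} \mathfrak{B}_1(s)\, 2s\,\Delta Q(s)\,ds = \int_{r_{\tau(m_N)}}^{r_1}\mathfrak{B}_1\,d\mu_Q^{\mathrm{rad}},
\end{align*}
i.e. the integral of $\mathfrak{B}_1$ against $\Delta Q\,\mathbbm{1}_{\{r_{\tau(m_N)}\le |z|\le r_1\}}\,dA$. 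Then I would invoke the computation already carried out in the proof of Lemma~\ref{Lem_B1 integral}: integrating \eqref{B1} against $2s\Delta Q(s)\,ds$ on $[r_0,r_1]$ gives $F_Q[\mathbb{A}_{r_0,r_1}]-\tfrac14\log(\Delta Q(r_1)/\Delta Q(r_0))+\tfrac13\log(r_1/r_0)$. Here I simply use that same identity with $r_0$ replaced by the small but positive number $r_{\tau(m_N)}$, so that
\begin{align*}
\int_{r_{\tau(m_N)}}^{r_1}\mathfrak{B}_1\,2s\Delta Q\,ds = F_Q[\mathbb{A}_{r_{\tau(m_N)},r_1}] -\frac14\log\frac{\Delta Q(r_1)}{\Delta Q(r_{\tau(m_N)})}+\frac13\log\frac{r_1}{r_{\tau(m_N)}},
\end{align*}
where $F_Q[\mathbb{A}_{a,r_1}]$ is the annular functional \eqref{FQ annulus} evaluated with inner radius $a$.

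The last step is the asymptotic analysis as $a = r_{\tau(m_N)}\to 0$. Using $r_{\tau(m_N)}^2 = m_N/(N\Delta Q(0)) + O((m_N/N)^{3/2})$, hence $\log r_{\tau(m_N)} = \tfrac12\log(m_N/N) - \tfrac12\log\Delta Q(0) + O(N^{-\frac12(1-\epsilon)})$, and the smoothness of $\Delta Q$ near $0$ (so $\Delta Q(r_{\tau(m_N)})=\Delta Q(0)+O(r_{\tau(m_N)})$, $(\partial_r\Delta Q)(r_{\tau(m_N)})=O(1)$), I expand each piece of $F_Q[\mathbb{A}_{a,r_1}]$: the term $\tfrac{1}{12}\log(a^2\Delta Q(a)/(r_1^2\Delta Q(r_1)))$ becomes $\tfrac1{12}\log(1/(r_1^2\Delta Q(r_1))) + \tfrac16\log r_{\tau(m_N)} + O(r_{\tau(m_N)})$, the boundary term $\tfrac{1}{16}a(\partial_r\Delta Q)(a)/\Delta Q(a) = O(r_{\tau(m_N)}^2)$ vanishes, and the integral $\tfrac{1}{24}\int_{a}^{r_1}(\partial_r\Delta Q/\Delta Q)^2 r\,dr$ converges to the corresponding integral over $[0,r_1]$ with error $O(r_{\tau(m_N)}^2)$; altogether this gives $F_Q[\mathbb{D}_{r_1}]$ from \eqref{FQ disc} plus $\tfrac16 \log r_{\tau(m_N)}$. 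Collecting the three explicit $\log r_{\tau(m_N)}$-type contributions ($\tfrac16$ from the first bracket of $F_Q$, $\tfrac13$ from the $+\tfrac13\log(r_1/a)$ term, and then substituting $\log r_{\tau(m_N)} = \tfrac12\log(m_N/N)-\tfrac12\log\Delta Q(0)+\cdots$), together with $-\tfrac14\log(\Delta Q(r_1)/\Delta Q(r_{\tau(m_N)})) = -\tfrac14\log(\Delta Q(r_1)/\Delta Q(0)) + O(r_{\tau(m_N)})$, yields the claimed identity after bookkeeping: the $\tfrac12\log r_1$ pieces combine to $\tfrac13\log r_1$, the $\log(m_N/N)$ picks up coefficient $-\tfrac1{12}$ (note $\tfrac12\cdot(\tfrac16+\tfrac13) = \tfrac14$... here one must be careful: only the $-\tfrac13\log a$ from the extremal-length term and $+\tfrac16\log a$ from $F_Q$ survive, giving net $-\tfrac16\log a = -\tfrac1{12}\log(m_N/N)+\tfrac1{12}\log\Delta Q(0)$, so the $\Delta Q(0)$ coefficient is $\tfrac1{12}+\tfrac14 = \tfrac16$ after adding the $-\tfrac14\log(1/\Delta Q(0))$ contribution). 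The main obstacle is exactly this final bookkeeping of the several logarithmic terms in $r_{\tau(m_N)}$ and $\Delta Q(0)$: one has to track them carefully through \eqref{FQ annulus} to confirm they assemble into the stated $-\tfrac1{12}\log(m_N/N)$, $-\tfrac14\log(\Delta Q(r_1)/\Delta Q(0))$, $\tfrac16\log\Delta Q(0)$, and $\tfrac13\log r_1$, and to check that every remainder (from $r_{\tau}\to 0$ expansions, from the truncated integral, and from the Euler--Maclaurin boundary/derivative terms) is indeed $O(N^{-\epsilon}+N^{-\frac12(1-\epsilon)})$.
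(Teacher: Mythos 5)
Your approach is essentially the paper's, with one clean reorganization: rather than redoing the explicit computation from \eqref{B1} and identifying $F_Q[\mathbb{D}_{r_1}]$ only at the end via \eqref{Delta Q partial integral} (as the paper does), you directly reuse the annulus identity from Lemma~\ref{Lem_B1 integral} with inner radius $a=r_{\tau(m_N)}$ and then expand as $a\to 0$. Since that identity is a purely algebraic (integration by parts) statement, it holds with $r_0$ replaced by any small $a>0$, so the step is sound, and both proofs ultimately rest on the same cancellations.

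Your outline is correct and the error bounds $O(N^{-\epsilon})$ (Riemann sum) and $O(N^{-\frac12(1-\epsilon)})$ (from $r_{\tau(m_N)}=O(\sqrt{m_N/N})$) are identified correctly, but the final bookkeeping paragraph contains slips that would need fixing before it could stand on its own. The claim ``the $\tfrac12\log r_1$ pieces combine to $\tfrac13\log r_1$'' is garbled: the explicit $\log r_1$ coefficient in the lemma comes entirely from the $+\tfrac13\log(r_1/a)$ term, while the $-\tfrac16\log r_1$ from the first bracket of $F_Q$ is absorbed into $F_Q[\mathbb{D}_{r_1}]$ itself. And ``$\tfrac1{12}+\tfrac14=\tfrac16$'' is false as written; the $\tfrac16$ coefficient of $\log\Lap Q(0)$ arises as $\tfrac1{12}+\tfrac1{12}$: one $\tfrac1{12}$ from the $\tfrac1{12}\log\Delta Q(a)\to\tfrac1{12}\log\Delta Q(0)$ piece of $F_Q[\mathbb{A}_{a,r_1}]$, and another $\tfrac1{12}$ from substituting $\log a=\tfrac12\log(m_N/N)-\tfrac12\log\Delta Q(0)+\cdots$ into the net $-\tfrac16\log a$; the $-\tfrac14\log(\Delta Q(r_1)/\Delta Q(0))$ is kept separately (with $\Delta Q(a)\to\Delta Q(0)$, error $O(a)$). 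These are expositional arithmetic slips rather than a structural gap; the strategy and the final formula are correct.
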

\begin{proof}
Observe that
\begin{align*}
  \sum_{j=m_N}^{N-1} \mathfrak{B}_1(r_{\tau(j)}) 
  &= N\int_{S\setminus S_{\tau(m_N)}} \mathfrak{B}_1\, d\mu_Q + O(\tau(m_N)^{-1}).
\end{align*}
By \eqref{r tau asymp} and \eqref{log r tau asymp}, 
\begin{align*}
  &\quad \int_{S\setminus S_{\tau(m_N)}} 
  \!\!\!\mathfrak{B}_1 \, d\mu_Q
  = \frac{1}{6}\log \Big(\frac{r_1}{r_{\tau(m_N)}}\Big) - \frac{19}{48}\log \Big(\frac{\Lap Q(r_1)}{\Lap Q(r_{\tau(m_N)})}\Big)-\frac{1}{16} \int_{r_{m_N}}^{r_1} \Big[ \frac{\d_r^2 \Lap Q(r)}{\Lap Q(r)} - \frac{5}{3}\Big(\frac{\d_r\Lap Q(r)}{\Lap Q(r)}\Big)^2 \Big] r\, dr\\
  &= \frac{1}{6}\log r_1 - \frac{1}{12}\log \Big(\frac{\tau(m_N)}{\Lap Q(0)}\Big) - \frac{19}{48}\log \Big(\frac{\Lap Q(r_1)}{\Lap Q(0)}\Big)- 
  \frac{1}{16} \int_{0}^{r_1} \Big[ \frac{\d_r^2 \Lap Q(r)}{\Lap Q(r)} - \frac{5}{3}\Big(\frac{\d_r\Lap Q(r)}{\Lap Q(r)}\Big)^2 \Big] r\, dr + O(N^{-\frac{1}{2}(1-\epsilon)}).
\end{align*}
Thus we have
\begin{align*}
  \frac{1}{N}\sum_{j=m_N}^{N-1} \mathfrak{B}_1(r_{\tau(j)}) & = \frac{1}{6}\log r_1 - \frac{1}{12}\log \Big( \frac{m_N}{N\Lap Q(0)}\Big) - \frac{19}{48}\log \Big(\frac{\Lap Q(r_1)}{\Lap Q(0)}\Big) \\
  &\quad - \frac{1}{16} \int_{0}^{r_1} \Big[ \frac{\d_r^2 \Lap Q(r)}{\Lap Q(r)} - \frac{5}{3}\Big(\frac{\d_r\Lap Q(r)}{\Lap Q(r)}\Big)^2 \Big] r\, dr + O(N^{-\epsilon}+N^{-\frac{1}{2}(1-\epsilon)}).
\end{align*}
Now lemma follows from \eqref{Delta Q partial integral}. 
\end{proof}

\begin{lem} \label{Lem_sum of log hj higher}
As $N\to \infty$, we have
\begin{align*}
  \sum_{j=m_N}^{N-1}\log h_j 
  &= -N^2 I_Q[\mu_Q] + \frac{N-m_N}{2}\log \Big(\frac{2\pi}{N}\Big) + Nm_N q(0) + \frac{3}{4}m_N^2 - \frac{1}{2}NE_{Q}[\mu_Q] 
  \\
  &\quad  - \frac{1}{2} m_N (m_N+1 )\log\Big( \frac{1}{\Lap Q(0)}\Big) + F_Q[\mathbb{D}_{r_1}] 
-\frac{1}{2}\Big(m_N^2 - \frac{1}{6}\Big)\log \Big( \frac{m_N}{N} \Big) + O(N^{-\frac{1}{12}}(\log N)^{3}).
\end{align*}
\end{lem}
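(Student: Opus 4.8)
The plan is to substitute the asymptotic expansion of $\log h_j$ from Lemma~\ref{Lem_hj higher} into the sum $\sum_{j=m_N}^{N-1}\log h_j$ and then apply the four preceding lemmas to handle each resulting partial sum term by term. Writing out the expansion from Lemma~\ref{Lem_hj higher},
\begin{equation*}
\sum_{j=m_N}^{N-1}\log h_j = -N\sum_{j=m_N}^{N-1} V_{\tau(j)}(r_{\tau(j)}) + \frac{1}{2}\sum_{j=m_N}^{N-1}\log\Big(\frac{2\pi r_{\tau(j)}^2}{N\Lap Q(r_{\tau(j)})}\Big) + \frac{1}{N}\sum_{j=m_N}^{N-1}\mathfrak{B}_1(r_{\tau(j)}) + \sum_{j=m_N}^{N-1} O(j^{-\frac32}(\log N)^\alpha).
\end{equation*}
The error sum is controlled by $\sum_{j\ge m_N} j^{-3/2}(\log N)^\alpha = O(m_N^{-1/2}(\log N)^\alpha) = O(N^{-\epsilon/2}(\log N)^\alpha)$, which is absorbed into the stated error once $\epsilon$ is fixed (ultimately $\epsilon=1/6$). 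The middle sum expands into $\frac{N-m_N}{2}\log(2\pi/N) + \sum_{j=m_N}^{N-1}\log r_{\tau(j)} - \frac12\sum_{j=m_N}^{N-1}\log\Lap Q(r_{\tau(j)})$.

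Next I would plug in the four lemmas: Lemma~\ref{Sum:Vtau disc} for $\sum V_{\tau(j)}(r_{\tau(j)})$ (multiplied by $-N$), Lemma~\ref{Lem_sum of Lap Q r disc} for both $\sum\log r_{\tau(j)}$ and $\sum\log\Lap Q(r_{\tau(j)})$, and Lemma~\ref{Lem_sum of B1 higher} for $\frac1N\sum\mathfrak{B}_1(r_{\tau(j)})$. The main content is then a bookkeeping exercise: one collects the coefficients of $N^2$, of $Nm_N q(0)$, of $m_N^2$, of $m_N^2\log(m_N/N)$, of $m_N\log(m_N/N)$, of $\log(m_N/N)$, of $m_N\log\Lap Q(0)$, of $\log r_1$, of $\log(\Lap Q(r_1)/\Lap Q(0))$, and of $\log\Lap Q(0)$, and checks they combine to the claimed expression. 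For instance, the $-N\cdot NI_Q[\mu_Q]$ gives the leading term; the $-N\cdot(-U_{\mu_Q}(0))$ from Lemma~\ref{Sum:Vtau disc} combines with the $N U_{\mu_Q}(0)$ inside $-\frac12\sum\log\Lap Q + \sum\log r_{\tau(j)}$ (via $U_{\mu_Q}(0)=-\log r_1 + (q(r_1)-q(0))/2$); the entropy term $-\frac12 NE_Q[\mu_Q]$ comes from $-\frac12\sum\log\Lap Q(r_{\tau(j)})$; the $F_Q[\mathbb{D}_{r_1}]$ term appears directly from Lemma~\ref{Lem_sum of B1 higher}; and the "spurious" pieces $\frac13\log r_1$, $-\frac14\log(\Lap Q(r_1)/\Lap Q(0))$, $\frac16\log\Lap Q(0)$ from that lemma must cancel against the corresponding pieces in $\frac12\sum\log r_{\tau(j)} - \frac12\sum\log\Lap Q(r_{\tau(j)})$ (which contribute $-\frac12\log r_1$, $+\frac14\log(\Lap Q(r_1)/\Lap Q(0))$, etc.) — up to the $m_N$-dependent remainders that survive into the final formula and are later cancelled in Section~\ref{Subsection...} against Lemma~\ref{Lem_sum of log hj lower}.

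I would be especially careful tracking which error terms dominate: Lemma~\ref{Sum:Vtau disc} carries $O(N^{-\frac12(3-5\epsilon)})$, Lemma~\ref{Lem_sum of Lap Q r disc} carries $O(N^{-\epsilon})$ (from the $\log r_{\tau}$ sum), Lemma~\ref{Lem_sum of B1 higher} carries $O(N^{-\epsilon}+N^{-\frac12(1-\epsilon)})$, and the summed $O(j^{-3/2})$ tail gives $O(N^{-\epsilon/2}(\log N)^\alpha)$. With $\epsilon$ eventually chosen as $1/6$ in Theorem~\ref{Thm_ZN disc}, the worst of these is $O(N^{-1/12}(\log N)^3)$ after accounting for logarithmic factors, matching the stated bound; a cleaner route is to keep $\epsilon$ symbolic here and note the error is $O(N^{-\frac12(1-\epsilon)}+N^{-\epsilon/2}(\log N)^\alpha)$, deferring the optimal choice of $\epsilon$. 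The main obstacle, such as it is, is purely organizational: there is no hard analysis left, but the algebra involves roughly a dozen terms with $m_N$-dependence and it is easy to drop a sign or a factor of $\frac12$; the safeguard is to verify the $m_N\to$ (formal limit) consistency, i.e. that setting aside the $m_N$-terms the remaining constants reproduce what one would get from the annulus formula in the limit $r_0\to 0$ combined with the $\zeta'(-1)$ correction, and to cross-check the $\frac{5}{12}\log N$ coefficient in the final theorem once Lemma~\ref{Lem_sum of log hj lower} and $\log N!$ are added.
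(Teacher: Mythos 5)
Your proposal is correct and follows essentially the same route as the paper: substitute Lemma~\ref{Lem_hj higher} into the sum, split the logarithm, and invoke Lemmas~\ref{Sum:Vtau disc}, \ref{Lem_sum of Lap Q r disc} and \ref{Lem_sum of B1 higher} term by term, then collect coefficients and choose $\epsilon=1/6$. One small slip in the error bookkeeping: after multiplying Lemma~\ref{Sum:Vtau disc}'s error $O(N^{-\frac{1}{2}(3-5\epsilon)})$ by $N$ you should get $O(N^{-\frac{1}{2}(1-5\epsilon)})$, not $O(N^{-\frac{1}{2}(1-\epsilon)})$ — this is precisely the term that is balanced against $O(N^{-\epsilon/2})$ by the choice $\epsilon=1/6$, and with that correction your final error estimate matches the lemma.
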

\begin{proof}
By Lemma~\ref{Lem_hj higher}, we have
\begin{align*}
  \sum_{j=m_N}^{N-1} \log h_j = & \sum_{j=m_N}^{N-1} \Big(-NV_{\tau(j)}(r_{\tau(j)}) + \log r_{\tau(j)} - \frac{1}{2}\log\Lap Q(r_{\tau(j)}) +\frac{1}{N}\mathfrak{B}_1(r_{\tau(j)}) \Big) 
  \\
  & +\frac{(N-m_N)}{2}\log \Big( \frac{2\pi}{N} \Big)+ O(m_N^{-\frac{1}{2}}(\log N)^{\alpha})+O(N^{-\frac{1}{2}(1-5\epsilon)}).
\end{align*}
Now Lemmas~\ref{Lem_hj higher}, \ref{Sum:Vtau disc}, \ref{Lem_sum of Lap Q r disc} and \ref{Lem_sum of B1 higher} complete the proof.
Here, for the error term, we take $\epsilon = 1/6$ so that $\epsilon/2=(1-5\epsilon)/2=1/12$. 
\end{proof}

We are now ready to prove the first assertion of Theorem~\ref{Thm_ZN annulus}. 

\begin{proof}[Proof of Theorem~\ref{Thm_ZN disc} (i)]
By combining Lemmas~\ref{Lem_sum of log hj lower}, \ref{Lem_sum of log hj higher} and \eqref{sum hj decomp}, we obtain
\begin{equation}
  \sum_{j=0}^{N-1}\log h_j 
   = -N^2 I_Q[\mu_Q] - \frac{1}{2}N\log N - \frac{N}{2}\Big(E_Q[\mu_Q] -\log(2\pi) \Big) - \frac{\log N}{12} +\zeta'(-1) +F_Q[\mathbb{D}_{r_1}] + O(N^{-\frac{1}{12}}(\log N)^{3}).
\end{equation}
Note here that all the terms involving $m_N$ in Lemmas~\ref{Lem_sum of log hj lower} and \ref{Lem_sum of log hj higher} vanish. 
Then the desired asymptotic expansion \eqref{ZN expansion cplx disc} follows from \eqref{ZN random normal symplectic} and \eqref{log N!}. 
This completes the proof.
\end{proof}

\subsection{Planar symplectic ensemble}

In this subsection, we prove the second assertion of Theorem~\ref{Thm_ZN disc}.

As a counterpart of Lemma~\ref{Lem_sum of log hj lower}, we have the following. 
\begin{lem} \label{Lem_sum of log hj lower symplectic}
As $N\to\infty$, we have
\begin{align*}
  \sum_{j=0}^{m_N-1} \log \widetilde{h}_{2j+1} & = -2 m_N N q(0) +m_N^2 \log m_N +\Big(\log 2-\frac32-\log (Nq''(0)) \Big) m_N^2
  \\
  &\quad+\frac12 m_N \log m_N+\Big( \log 2-\frac12+\frac12 \log \pi-\log (Nq''(0)) \Big)m_N
\\
&\quad -\frac{1}{24}\log m_N   + \frac{5}{24}\log 2 +\frac12 \zeta'(-1)+O(m_N^{-1}+N^{-\frac{1}{2}(1-5\epsilon)}(\log N)^3).
\end{align*}
\end{lem}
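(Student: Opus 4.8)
The plan is to mirror the proof of Lemma~\ref{Lem_sum of log hj lower}: feed the pointwise asymptotics of $\widetilde h_{2j+1}$ into the sum, collapse the elementary pieces, and reduce everything to the large-$N$ behaviour of $\prod_{j=0}^{m_N-1}(2j+1)!$, which I will express through Barnes $G$-functions.

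First, applying the second assertion of Lemma~\ref{Lem_hj lower} at the odd indices $2j+1$ (which range over $1,3,\dots,2m_N-1$, hence are all $\le 2m_N-1$), we get for $0\le j\le m_N-1$
\begin{equation*}
\log \widetilde h_{2j+1} = -2Nq(0) - (2j+2)\log\bigl(Nq''(0)\bigr) + \log(2j+1)! + O\bigl(N^{-1/2}(j+1)^{3/2}(\log N)^3\bigr).
\end{equation*}
Summing over $j$, using $\sum_{j=0}^{m_N-1}(2j+2)=m_N(m_N+1)$ and $\sum_{j=0}^{m_N-1}(j+1)^{3/2}=O(m_N^{5/2})=O(N^{5\epsilon/2})$, the accumulated error is $O(N^{-\frac12(1-5\epsilon)}(\log N)^3)$, and it remains to analyse $\sum_{j=0}^{m_N-1}\log(2j+1)! = \log\prod_{j=0}^{m_N-1}(2j+1)!$.

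The key step is the closed form
\begin{equation*}
\prod_{j=0}^{m_N-1}(2j+1)! = \frac{G(2m_N+2)\,\pi^{m_N/2}\,G(3/2)}{2^{\,m_N(m_N+1)}\,G(m_N+\tfrac32)\,G(m_N+2)}.
\end{equation*}
This follows by writing $\prod_{k=1}^{2m_N}k!=G(2m_N+2)$, separating its even and odd factors, and treating the even part $\prod_{j=1}^{m_N}(2j)!=\prod_{j=1}^{m_N}\Gamma(2j+1)$ with Legendre's duplication formula $\Gamma(2j+1)=2^{2j}\pi^{-1/2}\Gamma(j+\tfrac12)\Gamma(j+1)$, the identity $\prod_{j=1}^{m_N}\Gamma(j+1)=G(m_N+2)$, and the telescoping relation $\prod_{j=1}^{m_N}\Gamma(j+\tfrac12)=G(m_N+\tfrac32)/G(\tfrac32)$ that comes from $G(z+1)=\Gamma(z)G(z)$ (I would check the formula on $m_N=1,2$ to guard against slips).

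Finally I would insert the asymptotic expansion \eqref{Barnes G asymp} into $\log G(2m_N+2)$, $\log G(m_N+\tfrac32)$, $\log G(m_N+2)$ with $z=2m_N+1$, $m_N+\tfrac12$, $m_N+1$ respectively, expand the logarithms $\log(2m_N+1)=\log 2+\log m_N+O(m_N^{-1})$, $\log(m_N+\tfrac12)=\log m_N+O(m_N^{-1})$, $\log(m_N+1)=\log m_N+O(m_N^{-1})$ to sufficient order, and use the explicit value $\log G(\tfrac32)=\tfrac14\log\pi+\tfrac32\zeta'(-1)+\tfrac1{24}\log 2$ (equivalently, the evaluation of $G(\tfrac12)$ via the Glaisher--Kinkelin constant $A$ with $\log A=\tfrac1{12}-\zeta'(-1)$). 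Collecting the coefficients of $m_N^2\log m_N$, $m_N^2$, $m_N\log m_N$, $m_N$, $\log m_N$ and the constant term, together with the $-m_N(m_N+1)\log(Nq''(0))$ piece, gives the stated expansion; the residual error $O(m_N^{-1}+N^{-\frac12(1-5\epsilon)}(\log N)^3)$ is the larger of the truncation error in the shift expansions and the summed error above. The main obstacle is exactly this last bookkeeping: the $\log\pi$ contributions must cancel in the $O(1)$ term, the coefficient $-\tfrac1{24}$ of $\log m_N$ arises only after combining $+\tfrac{5}{12}$ from $G(2m_N+2)$ with $-\tfrac5{12}$ and $-\tfrac1{24}$ from $G(m_N+2)$ and $G(m_N+\tfrac32)$, and the constant $\tfrac5{24}\log2+\tfrac12\zeta'(-1)$ appears only because the $\zeta'(-1)$-terms of $G(2m_N+2)$, $-G(m_N+\tfrac32)$, $-G(m_N+2)$ and $G(\tfrac32)$ sum to $1-1-1+\tfrac32=\tfrac12$ — so the exact value of $G(\tfrac32)$ is indispensable.
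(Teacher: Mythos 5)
Your argument is correct, and it is a genuinely different organization of the Barnes-$G$ calculation from the one the paper uses. The paper applies the $n=2$ case of the Gamma multiplication formula \eqref{Gamma multiplication} to \emph{each} summand $\log(2j+1)!=\log\Gamma(2j+2)$, obtaining $\log\widetilde h_{2j+1}=-2Nq(0)-(2j+2)\log(Nq''(0))+\log\bigl(\tfrac{2^{2j+1}}{\sqrt\pi}\Gamma(j+1)\Gamma(j+\tfrac32)\bigr)+O(\cdot)$ before summing; after summation the Gamma products telescope to $G(m_N+1)\,G(m_N+\tfrac32)/G(\tfrac32)$, leaving only two $G$-factors with argument $O(m_N)$ plus the elementary prefactors $2^{m_N^2}\pi^{-m_N/2}$. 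You instead keep $\log(2j+1)!$ intact, sum, and express $\prod_{j=0}^{m_N-1}(2j+1)!$ through the complementary even-factorial identity $G(2m_N+2)\big/\prod_{j=1}^{m_N}(2j)!$, which after the duplication step yields \emph{three} large $G$-factors, $G(2m_N+2)$, $G(m_N+\tfrac32)$, $G(m_N+2)$, plus $G(\tfrac32)$ and elementary prefactors; I checked your closed-form product formula for $m_N=1,2$ and it is correct, and I also verified that the coefficients you announce (the $\tfrac{5}{12}-\tfrac{5}{12}-\tfrac{1}{24}=-\tfrac1{24}$ for $\log m_N$, and $1-1-1+\tfrac32=\tfrac12$ for $\zeta'(-1)$) are consistent. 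Both routes are valid and give identical asymptotics; the paper's is slightly lighter because it avoids the extra $\log 2$ cross-terms that $G(2m_N+2)$ introduces through $\log(2m_N+1)=\log 2+\log m_N+O(m_N^{-1})$, whereas yours isolates the fact that the whole computation reduces to the single well-known asymptotic of $G$ applied to a triangle of factorials. Either way you need the exact evaluation of $G(\tfrac32)$ (the paper's \eqref{G(1/2)}), which, as you note, is what produces the $\tfrac{5}{24}\log 2+\tfrac12\zeta'(-1)$ constant.
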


\begin{proof}
By Lemma~\ref{Lem_hj lower} and \eqref{Gamma multiplication}, we have
\begin{align*}
  \log \widetilde{h}_{2j+1} = - 2Nq(0) - (2j+2) \log (N q''(0)) + \log \Big( \frac{ 2^{2j+1} }{ \sqrt{\pi} } \Gamma(j+1)\Gamma(j+\tfrac32) \Big) + O(N^{-\frac{1}{2}(1-3\epsilon)}).
\end{align*}
Thus we have 
\begin{align*}
  \sum_{j=0}^{m_N-1} \log \widetilde{h}_{2j+1} & = -2 m_N N q(0) - m_N(m_N+1)\log (N q''(0))
   +m_N^2 \log 2-\frac{m_N}{2} \log \pi 
  \\
  &\quad +\log \Big( G(m_N+1) \frac{ G(m_N+\frac32) }{ G(\frac32) } \Big)+ O(N^{-\frac{1}{2}(1-5\epsilon)}).
\end{align*}
Now lemma follows from the asymptotic expansion \eqref{Barnes G asymp} of the Barnes $G$ function and
\begin{equation} \label{G(1/2)}
G(\tfrac12)=2^{ \frac{1}{24} }\,\exp\Big( \frac32 \zeta'(-1) \Big) \pi^{-\frac14}, \qquad G(\tfrac32)=G(\tfrac12)\Gamma(\tfrac12)= G(\tfrac12)\sqrt{\pi}.
\end{equation}
\end{proof}

\begin{lem}\label{Sum:Vtau disc symplectic}
As $N\to \infty,$ we have
\begin{align*}
  \sum_{j=m_N}^{N-1}V_{\wt{\tau}(2j+1)}(r_{\wt{\tau}(2j+1)}) 
  &= N I_{Q}[\mu_Q] + \frac{1}{12N}\log r_1 \\
  &\quad - m_Nq(0)- \frac{3}{4}\frac{m_N^2}{N} +\frac{1}{4N}\Big(2m_N^2-\frac{1}{6}\Big)\log \Big(\frac{m_N}{N\Lap Q(0)}\Big) + O(N^{-\frac{1}{2}(3-5\epsilon)}). 
\end{align*}
\end{lem}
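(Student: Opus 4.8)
The plan is to reuse the device from the annular case (Lemma~\ref{Sum:Vtau symplectic}): recover the sum over odd indices as (the sum over all indices in the relevant window) minus (the sum over even indices), and then reduce both of these to the disc estimate Lemma~\ref{Sum:Vtau disc}, applied twice. Since $\wt{\tau}(k)=k/(2N)$, one has $\wt{\tau}(2j)=j/N=\tau(j)$ and hence $V_{\wt{\tau}(2j)}(r_{\wt{\tau}(2j)})=V_{\tau(j)}(r_{\tau(j)})$; moreover, as $j$ runs over $\{m_N,\dots,N-1\}$ the integers $2j$ and $2j+1$ together run over every integer in $[2m_N,2N-1]$, so
\[
\sum_{k=2m_N}^{2N-1} V_{\wt{\tau}(k)}(r_{\wt{\tau}(k)}) \;=\; \sum_{j=m_N}^{N-1} V_{\tau(j)}(r_{\tau(j)}) \;+\; \sum_{j=m_N}^{N-1} V_{\wt{\tau}(2j+1)}(r_{\wt{\tau}(2j+1)}).
\]

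The first sum on the right is exactly the quantity evaluated in Lemma~\ref{Sum:Vtau disc}. For the sum on the left, I would invoke Lemma~\ref{Sum:Vtau disc} a second time, now with $N$ replaced by $2N$ and the lower cutoff $m_N$ replaced by $2m_N$: the summand there is $V_{j/(2N)}(r_{j/(2N)})$, which is what we want, and $2m_N\asymp(2N)^{\epsilon}$ still satisfies the hypothesis used in that lemma. Subtracting the two instances then gives a closed-form expression for the odd-index sum.

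The step that needs a little care is confirming that the rescaled application of Lemma~\ref{Sum:Vtau disc} is legitimate and that the error order is not degraded. The convenient point here is that the new lower endpoint satisfies $\wt{\tau}(2m_N)=2m_N/(2N)=m_N/N=\tau(m_N)$, i.e.\ it is \emph{unchanged}, so all the small-$\tau$ inputs from the proof of Lemma~\ref{Sum:Vtau disc} — namely \eqref{r tau asymp}, \eqref{log r tau asymp} and \eqref{q r tau asymp} — are applied at precisely the same point, and every error term of the shape $O((2N)^{-a}(2m_N)^{-b})$ has the same order as the corresponding $O(N^{-a}m_N^{-b})$; hence the total error remains $O(N^{-\frac12(3-5\epsilon)})$. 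Granting this, the rest is bookkeeping in the difference of the two right-hand sides: the $U_{\mu_Q}(0)$-terms cancel, the $\log r_1$ coefficients combine as $-\tfrac1{12N}+\tfrac1{6N}=\tfrac1{12N}$, the $q(0)$- and $\tfrac{m_N^2}{N}$-coefficients halve to $-m_N$ and $-\tfrac34\tfrac{m_N^2}{N}$, the coefficient of $\log\!\big(\tfrac{m_N}{N\Lap Q(0)}\big)$ becomes $\tfrac1{4N}\big(2m_N^2-\tfrac16\big)$, and the stray $\tfrac{m_N}{2N}$-terms cancel, which is exactly the asserted identity. I do not expect any genuine analytic obstacle beyond this verification and the algebraic collection of terms.
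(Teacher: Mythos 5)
Your proposal is correct and follows essentially the same route as the paper: split the doubled-index sum $\sum_{k=2m_N}^{2N-1}V_{\wt{\tau}(k)}(r_{\wt{\tau}(k)})$ into even and odd parts, evaluate the full sum by Lemma~\ref{Sum:Vtau disc} with $N\to 2N$ and cutoff $2m_N$, identify the even part with the sum in Lemma~\ref{Sum:Vtau disc} via $\wt{\tau}(2j)=\tau(j)$, and subtract. Your observation that $\wt{\tau}(2m_N)=\tau(m_N)$ (so the small-$\tau$ expansions are evaluated at the unchanged point and the error order is preserved) is precisely the justification implicit in the paper, and your bookkeeping of all the coefficient cancellations checks out.
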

\begin{proof}
By using Lemma~\ref{Sum:Vtau disc} with $N \to 2N$, we have 
\begin{align*}
  \sum_{j=2m_N}^{2N-1}V_{\wt{\tau}(j)}(r_{\wt{\tau}(j)}) 
  &= 2N I_{Q}[\mu_Q] -U_{\mu_Q}(0) - \frac{1}{12N}\log r_1 \\
  &\quad - 2m_Nq(0) - \frac{3}{2}\frac{m_N^2}{N} + \frac{1}{4N}\Big(4m_N^2-2m_N +\frac{1}{6}\Big)\log \Big(\frac{m_N}{N\Lap Q(0)}\Big) + \frac{m_N}{2N} + O(N^{-\frac{1}{2}(3-5\epsilon)}). 
\end{align*}
On the other hand, by the Euler-Maclaurin formula \eqref{EMF}, we have
\begin{align}
\begin{split} 
  \sum_{j=m_N}^{N-1} V_{\wt{\tau}(2j)}(r_{\wt{\tau}(2j)}) &= \frac12\int_{2m_N}^{2N} V_{\wt{\tau}(t)}(r_{\wt{\tau}(t)}) \, dt - \frac{1}{2}\big(V_{\wt{\tau}(2N)}(r_{\wt{\tau}(2N)})-V_{\wt{\tau}(2m_N)}(r_{\wt{\tau}(2m_N)})\big)
  \\
  &\quad + \frac{1}{12}\Big(\d_t (V_{\wt{\tau}(2t)}(r_{\wt{\tau}(2t)}))\big|_{t=N} - \d_t ( V_{\wt{\tau}(2t)}(r_{\wt{\tau}(2t)}))\big|_{t=m_N}\Big) + O(N^{-1-2\epsilon}).
\end{split}
\end{align}
Following the proof of Lemma~\ref{Sum:Vtau disc}, we have
\begin{align}
\begin{split} 
  \sum_{j=m_N}^{N-1} V_{\wt{\tau}(2j)}(r_{\wt{\tau}(2j)}) &= N I_{Q}[\mu_Q] -U_{\mu_Q}(0) - \frac{1}{6N}\log r_1 \\
  &\quad - m_Nq(0) - \frac{3}{4}\frac{m_N^2}{N} + \frac{1}{2N}(m_N^2-m_N +\frac{1}{6})\log \Big(\frac{m_N}{N\Lap Q(0)}\Big) + \frac{m_N}{2N} + O(N^{-\frac{1}{2}(3-5\epsilon)}), 
\end{split}
\end{align}
which completes the proof.
\end{proof}

\begin{lem}\label{Lem_sum of Lap Q r disc symplectic}
As $N \to \infty$, we have
\begin{equation}
\sum_{j=m_N}^{N-1} \log \Lap Q(r_{\wt{\tau}(2j+1)}) = NE_{Q}[\mu_Q]- m_N\log \Lap Q(0) + O(N^{-\frac{1}{2}(1-\epsilon)}),
\end{equation}
and 
\begin{equation}
 \sum_{j=m_N}^{N-1} \log r_{\wt{\tau}(2j+1)}  =-NU_{\mu_Q}(0) + \frac{m_N}{2} - \frac{m_N}{2} \log \Big(\frac{m_N}{N\Lap Q(0)}\Big) + O(N^{-\epsilon}).
\end{equation}
\end{lem}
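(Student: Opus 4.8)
The plan is to deduce both identities from Lemma~\ref{Lem_sum of Lap Q r disc} by a parity splitting of the summation range, in exactly the way Lemma~\ref{Sum:LapQ symplectic} was obtained from Lemma~\ref{Sum:LapQ} in the annular case. The starting point is that $\wt{\tau}(2j)=2j/(2N)=j/N=\tau(j)$, hence $r_{\wt{\tau}(2j)}=r_{\tau(j)}$, while decomposing $\{2m_N,2m_N+1,\dots,2N-1\}$ into its even indices $\{2j:\ m_N\le j\le N-1\}$ and its odd indices $\{2j+1:\ m_N\le j\le N-1\}$ yields, for any function $g$,
\[
\sum_{j=m_N}^{N-1} g\big(r_{\wt{\tau}(2j+1)}\big)=\sum_{j=2m_N}^{2N-1} g\big(r_{\wt{\tau}(j)}\big)-\sum_{j=m_N}^{N-1} g\big(r_{\tau(j)}\big).
\]
I will apply this with $g=\log\Lap Q$ for the first identity and with $g=\log$ for the second.

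The first sum on the right-hand side is evaluated by invoking Lemma~\ref{Lem_sum of Lap Q r disc} with $N$ replaced by $2N$ and with the lower endpoint $m_N$ replaced by $2m_N$; the substituted cutoff $2m_N$ remains of order $N^{\epsilon}$, well within the admissible range, and the relevant small parameter $\tau(m_N)=m_N/N$ is invariant under this rescaling, so the error terms stay $O(N^{-\frac12(1-\epsilon)})$ and $O(N^{-\epsilon})$. The second sum is Lemma~\ref{Lem_sum of Lap Q r disc} verbatim. Subtracting the two expansions, the potential-independent boundary terms $-\frac12\log(\Lap Q(r_1)/\Lap Q(0))$, respectively $-\frac12\log r_1$, cancel; the leading terms drop from $2N E_Q[\mu_Q]$ to $N E_Q[\mu_Q]$, respectively from $-2NU_{\mu_Q}(0)$ to $-NU_{\mu_Q}(0)$; and the $m_N$-dependent contributions combine to leave exactly $-m_N\log\Lap Q(0)$ in the first case and $\frac{m_N}{2}-\frac{m_N}{2}\log\big(m_N/(N\Lap Q(0))\big)$ in the second.

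No new analytic input is required: all the estimates were already established in the proof of Lemma~\ref{Lem_sum of Lap Q r disc} through the Euler--Maclaurin formula \eqref{EMF} and the small-$\tau$ expansions \eqref{r tau asymp}, \eqref{log r tau asymp}, \eqref{q r tau asymp}. The only thing to be careful about is the bookkeeping of the $m_N$-terms under the subtraction and checking that the error orders survive the substitution $(N,m_N)\mapsto(2N,2m_N)$ --- there is no genuine obstacle here. (If one preferred to avoid the parity trick, each identity could instead be derived by applying the Euler--Maclaurin formula directly to $j\mapsto g(r_{\wt{\tau}(2j+1)})$ over $[m_N,N]$, using $\wt{\tau}(2t+1)=(2t+1)/(2N)$ and \eqref{drdtau}, but this merely reproves the same computation.)
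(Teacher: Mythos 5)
Your proof is correct and follows exactly the paper's strategy: subtract the even-index partial sums from the full sums over $j = 2m_N,\dots,2N-1$, with the latter coming from Lemma~\ref{Lem_sum of Lap Q r disc} under the substitution $(N,m_N)\mapsto(2N,2m_N)$. You streamline one step by noting that, since $r_{\wt\tau(2j)}=r_{\tau(j)}$, the even-index sums are literally Lemma~\ref{Lem_sum of Lap Q r disc} verbatim; the paper instead rederives them, and its displayed formulas for these even-index sums appear to carry misprints in the $m_N$-terms ($-2m_N\log\Lap Q(0)$ and $m_N$ where $-m_N\log\Lap Q(0)$ and $m_N/2$ should appear), though the final stated lemma agrees with your bookkeeping.
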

\begin{proof}
By Lemma~\ref{Lem_sum of Lap Q r disc} with $N \to 2N$, we have 
\begin{align*}
  \sum_{j=2m_N}^{2N-1} \log \Lap Q(r_{\wt{\tau}(j)}) &= 2NE_{Q}[\mu_Q] - \frac{1}{2} \log \Big( \frac{\Lap Q(r_1)}{ \Lap Q(0) } \Big) - 2m_N\log \Lap Q(0) + O(N^{-\frac{1}{2}(1-\epsilon)}),
  \\
  \sum_{j=2m_N}^{2N-1} \log r_{\wt{\tau}(j)} &= -2N U_{\mu_Q}(0) + m_N - \frac{1}{2}\log r_1 - \Big(m_N -\frac{1}{4} \Big)\log \Big(\frac{m_N}{N\Lap Q(0)}\Big) + O(N^{-\epsilon}).
\end{align*}
Following the proof of Lemma~\ref{Lem_sum of Lap Q r disc}, we also have 
\begin{align*}
  \sum_{j=m_N}^{N-1} \log \Lap Q(r_{\wt{\tau}(2j)}) & = NE_{Q}[\mu_Q] -\frac{1}{2} \log \Big( \frac{\Lap Q(r_1)}{ \Lap Q(0) } \Big) - 2m_N\log \Lap Q(0) + O(N^{-\frac{1}{2}(1-\epsilon)}),
  \\
  \sum_{j=m_N}^{N-1} \log r_{\wt{\tau}(2j)} &= -NU_{\mu_Q}(0)+ m_N - \frac{1}{2}\log r_1 - \Big( m_N -\frac{1}{4} \Big)\log \Big( \frac{m_N}{N\Lap Q(0)} \Big) + O(N^{-\epsilon}).
\end{align*}
This completes the proof. 
\end{proof}

\begin{lem} \label{Lem_sum of B1 higher symplectic}
As $N\to\infty$, we have
\begin{align*}
  \frac{1}{2N}\sum_{j=m_N}^{N-1} \mathfrak{B}_1(r_{\wt{\tau}(2j+1)}) = \frac12 F_Q[\mathbb{D}_{r_1}] + \frac{1}{6}\log r_1 - \frac{1}{24}\log \Big( \frac{m_N}{N}\Big) -\frac{1}{8} \log \Big( \frac{ \Delta Q(r_1) }{ \Delta Q(0) } \Big)+\frac{1}{12}\log \Lap Q(0) + O(m_N^{-1}). 
\end{align*}
\end{lem}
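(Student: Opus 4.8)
The plan is to obtain this sum from Lemma~\ref{Lem_sum of B1 higher} by the same even/odd splitting that was used in the proofs of Lemmas~\ref{Sum:Vtau disc symplectic} and \ref{Lem_sum of Lap Q r disc symplectic}, so that no new Euler--Maclaurin analysis is required. The starting point is the elementary fact that the index set $\{2m_N,2m_N+1,\dots,2N-1\}$ splits into its even members $2k$ and its odd members $2k+1$ with $k$ ranging over $\{m_N,\dots,N-1\}$, which gives the exact identity
\[
\sum_{j=2m_N}^{2N-1}\mathfrak{B}_1(r_{\wt{\tau}(j)}) = \sum_{k=m_N}^{N-1}\mathfrak{B}_1(r_{\wt{\tau}(2k)}) + \sum_{k=m_N}^{N-1}\mathfrak{B}_1(r_{\wt{\tau}(2k+1)}).
\]
Thus it is enough to evaluate the left-hand side and the first sum on the right; the desired odd sum is then their difference.

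First I would evaluate the full sum. Since $\wt{\tau}(j)=j/(2N)$ plays exactly the role of $\tau$ in Lemma~\ref{Lem_sum of B1 higher}, applying that lemma with $N$ replaced by $2N$ and lower cutoff $2m_N$ (which is still an admissible small power of $N$ since $\epsilon<1/5$) gives
\[
\frac{1}{2N}\sum_{j=2m_N}^{2N-1}\mathfrak{B}_1(r_{\wt{\tau}(j)}) = F_Q[\mathbb{D}_{r_1}] + \tfrac13\log r_1 - \tfrac1{12}\log\!\Big(\tfrac{m_N}{N}\Big) - \tfrac14\log\!\Big(\tfrac{\Lap Q(r_1)}{\Lap Q(0)}\Big) + \tfrac16\log\Lap Q(0) + O(m_N^{-1}),
\]
where one uses $\log(2m_N/2N)=\log(m_N/N)$ and observes that under $N\mapsto2N$, $m_N\mapsto2m_N$ the error $O(N^{-\epsilon}+N^{-\frac12(1-\epsilon)})$ stays $O(m_N^{-1})$ (the first term dominates as $\epsilon<\tfrac13$). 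Next, since $\wt{\tau}(2k)=k/N=\tau(k)$ we have $r_{\wt{\tau}(2k)}=r_{\tau(k)}$, so Lemma~\ref{Lem_sum of B1 higher} applied as stated gives $\tfrac1{2N}\sum_{k=m_N}^{N-1}\mathfrak{B}_1(r_{\wt{\tau}(2k)})$ equal to exactly one half of the right-hand side above, with the same error.

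Finally, subtracting the even sum from the full sum in the displayed identity leaves the odd sum equal to one half of the full-sum expression, namely
\[
\tfrac12 F_Q[\mathbb{D}_{r_1}] + \tfrac16\log r_1 - \tfrac1{24}\log\!\Big(\tfrac{m_N}{N}\Big) - \tfrac18\log\!\Big(\tfrac{\Lap Q(r_1)}{\Lap Q(0)}\Big) + \tfrac1{12}\log\Lap Q(0) + O(m_N^{-1}),
\]
which is the asserted formula (recall $\Lap=\Delta$). There is no genuine analytic obstacle; the only points needing care are checking that the rescaled cutoff $2m_N$ satisfies the hypotheses of Lemma~\ref{Lem_sum of B1 higher} after $N\mapsto2N$, and verifying that the $\log(m_N/N)$ and $\log\Lap Q(0)$ contributions are invariant under this rescaling so that the algebra collapses exactly to one half of the single-index formula.
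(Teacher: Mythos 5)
Your proof is correct and follows the same even/odd splitting strategy that the paper uses for the parallel Lemmas~\ref{Sum:Vtau disc symplectic} and~\ref{Lem_sum of Lap Q r disc symplectic}: split the full range $\{2m_N,\dots,2N-1\}$, evaluate the full sum by applying Lemma~\ref{Lem_sum of B1 higher} with $N\mapsto 2N$ (so that $\tau$ becomes $\wt\tau$) and cutoff $2m_N$, identify the even-index sum with the original via $r_{\wt{\tau}(2k)}=r_{\tau(k)}$, and subtract. The algebraic check that $\log(2m_N/(2N))=\log(m_N/N)$ and that the errors $O((2N)^{-\epsilon}+(2N)^{-(1-\epsilon)/2})$ stay $O(m_N^{-1})$ for $\epsilon<1/3$ is the essential point and you handle it.

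The paper's one-line proof (``follows along the same lines of Lemma~\ref{Lem_sum of B1 higher}'') suggests redoing the Euler--Maclaurin computation directly for the odd-index sum, whereas you reduce entirely to the already-established Lemma~\ref{Lem_sum of B1 higher}. Your route is, if anything, cleaner than what the paper actually does in the analogous Lemmas~\ref{Sum:Vtau disc symplectic}--\ref{Lem_sum of Lap Q r disc symplectic}: there, the even-index sum is re-derived by a separate Euler--Maclaurin argument, even though the identity $\wt\tau(2k)=\tau(k)$ means it is literally the sum already computed in the single-index lemma. Your observation eliminates that redundancy. One small thing to flag explicitly: when Lemma~\ref{Lem_sum of B1 higher} is invoked with $N\mapsto 2N$, the cutoff $2m_N=2N^\epsilon$ is $2^{1-\epsilon}(2N)^\epsilon$ rather than exactly $(2N)^\epsilon$; you note this is still admissible, which is correct, but it deserves a sentence explaining that the proof of Lemma~\ref{Lem_sum of B1 higher} only uses $m_N\to\infty$ with $m_N=O(N^{1/5})$, so a bounded multiplicative constant is harmless.
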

\begin{proof}
This lemma follows along the same lines of Lemma~\ref{Lem_sum of B1 higher}.
\end{proof}

\begin{lem} \label{Lem_sum of log hj higher symplectic}
As $N\to\infty$, we have
\begin{align*}
  \sum_{j=m_N}^{N-1} \log \widetilde{h}_{2j+1} & =-2N^2 I_{Q}[\mu_Q]-\frac{N}{2}\log N+ N\Big( \frac{\log \pi}{2} -U_{\mu_Q}(0) -\frac12 E_{Q}[\mu_Q] \Big)
  \\
  &\quad + \frac{3}{2} m_N^2 -\Big(m_N^2-\frac{1}{24}\Big)\log \Big(\frac{m_N}{N}\Big) 
  +m_N \Big( 2 Nq(0) + \frac{1}{2} +\frac12 \log \Big(\frac{m_N}{\pi}\Big) \Big)
  \\
  &\quad -\Big(m_N^2+\frac{1}{8}\Big)\log \Big(\frac{1}{\Lap Q(0)}\Big) + \frac12 F_Q[\mathbb{D}_{r_1}] + \frac{1}{8} \log\Big( \frac{ 1 }{ \Delta Q(r_1) } \Big) 
+ O(m_N^{-\frac{1}{2}}(\log N)^{\alpha}).
\end{align*}
\end{lem}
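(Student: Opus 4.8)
The plan is to proceed exactly as in the proof of Lemma~\ref{Lem_sum of log hj higher}, replacing the input on $\log h_j$ by the second bullet of Lemma~\ref{Lem_hj higher} on $\log\widetilde{h}_j$, and the summation estimates used there by their symplectic counterparts in Lemmas~\ref{Sum:Vtau disc symplectic}, \ref{Lem_sum of Lap Q r disc symplectic} and \ref{Lem_sum of B1 higher symplectic}. First I would substitute $j\mapsto 2j+1$ into the expansion of $\log\widetilde{h}_j$ furnished by Lemma~\ref{Lem_hj higher} and sum over $j=m_N,\dots,N-1$; since $2j+1$ then ranges over odd integers in $[2m_N+1,2N-1]\subset[2m_N,2N-1]$, that lemma applies and yields
\begin{align*}
\sum_{j=m_N}^{N-1}\log\widetilde{h}_{2j+1}
&= -2N\sum_{j=m_N}^{N-1} V_{\wt{\tau}(2j+1)}\big(r_{\wt{\tau}(2j+1)}\big)
+ \frac{N-m_N}{2}\log\Big(\frac{\pi}{N}\Big)
+ \sum_{j=m_N}^{N-1}\log r_{\wt{\tau}(2j+1)}
\\
&\quad - \frac12\sum_{j=m_N}^{N-1}\log\Lap Q\big(r_{\wt{\tau}(2j+1)}\big)
+ \frac{1}{2N}\sum_{j=m_N}^{N-1}\mathfrak{B}_1\big(r_{\wt{\tau}(2j+1)}\big)
+ O\big(m_N^{-\frac12}(\log N)^{\alpha}\big),
\end{align*}
where the final error comes from summing the per-term error $O(j^{-3/2}(\log N)^{\alpha})$ in Lemma~\ref{Lem_hj higher} via $\sum_{j\ge m_N}j^{-3/2}=O(m_N^{-1/2})$.

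The second step is to insert the three cited summation lemmas and simplify. Multiplying Lemma~\ref{Sum:Vtau disc symplectic} by $-2N$ produces the leading term $-2N^2 I_Q[\mu_Q]$, a term $-\tfrac16\log r_1$ which cancels against the $+\tfrac16\log r_1$ produced by Lemma~\ref{Lem_sum of B1 higher symplectic}, and the $m_N$-dependent pieces $2Nm_Nq(0)$, $\tfrac32 m_N^2$ and $-\tfrac12\big(2m_N^2-\tfrac16\big)\log\big(m_N/(N\Lap Q(0))\big)$; here it is convenient to fix $\epsilon=1/6$, so that the residual $-2N\cdot O(N^{-\frac12(3-5\epsilon)})=O(N^{-1/12})$ is absorbed into $O(m_N^{-1/2}(\log N)^{\alpha})$. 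The term $\tfrac{N-m_N}{2}\log(\pi/N)$ supplies $-\tfrac N2\log N+\tfrac N2\log\pi$ together with the corrections $\tfrac{m_N}{2}\log N-\tfrac{m_N}{2}\log\pi$; Lemma~\ref{Lem_sum of Lap Q r disc symplectic} supplies $-\tfrac12 NE_Q[\mu_Q]$ from the $\log\Lap Q$ sum and $-NU_{\mu_Q}(0)$ from the $\log r$ sum, along with further $m_N$-terms and negligible errors; and Lemma~\ref{Lem_sum of B1 higher symplectic} supplies $\tfrac12 F_Q[\mathbb{D}_{r_1}]$, $-\tfrac1{24}\log(m_N/N)$, $-\tfrac18\log\big(\Lap Q(r_1)/\Lap Q(0)\big)$ and $\tfrac1{12}\log\Lap Q(0)$.

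The remaining work is purely algebraic: one collects the $N$-order and $O(1)$ parts into $-2N^2 I_Q[\mu_Q]-\tfrac N2\log N+N\big(\tfrac{\log\pi}{2}-U_{\mu_Q}(0)-\tfrac12 E_Q[\mu_Q]\big)+\tfrac12 F_Q[\mathbb{D}_{r_1}]+\tfrac18\log\big(1/\Lap Q(r_1)\big)$, and regroups the divergent $m_N$-dependent contributions into the asserted form $\tfrac32 m_N^2-\big(m_N^2-\tfrac1{24}\big)\log(m_N/N)+m_N\big(2Nq(0)+\tfrac12+\tfrac12\log(m_N/\pi)\big)-\big(m_N^2+\tfrac18\big)\log\big(1/\Lap Q(0)\big)$. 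I expect the main obstacle to be precisely this bookkeeping of the numerous $m_N$-dependent terms — those quadratic in $m_N$, and the $m_N\log m_N$, $m_N\log N$, $m_N Nq(0)$ and $m_N^2\log\Lap Q(0)$ contributions, and so on — since each of them diverges with $N$ and only their exact combination is meaningful: it must match term-by-term the $m_N$-dependent part of Lemma~\ref{Lem_sum of log hj lower symplectic} so that the two partial sums in the decomposition of $\sum_{j=0}^{N-1}\log(2\widetilde{h}_{2j+1})$ combine to an $m_N$-independent expression in the proof of Theorem~\ref{Thm_ZN disc}(ii).
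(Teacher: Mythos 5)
Your proposal reproduces the paper's own proof verbatim: substitute the $\log\widetilde{h}_{2j+1}$-expansion from the second bullet of Lemma~\ref{Lem_hj higher}, sum over $j=m_N,\dots,N-1$ with the per-term error summing to $O(m_N^{-1/2}(\log N)^{\alpha})$, and feed in Lemmas~\ref{Sum:Vtau disc symplectic}, \ref{Lem_sum of Lap Q r disc symplectic} and \ref{Lem_sum of B1 higher symplectic}. That is exactly the paper's argument, and your emphasis that the real work is the $m_N$-dependent bookkeeping (which must mirror Lemma~\ref{Lem_sum of log hj lower symplectic}) is also well placed.

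One thing you should be prepared for when you actually carry out that bookkeeping: the coefficients it produces for $\log m_N$, $\log N$, $\log\Lap Q(0)$ are, respectively, $-m_N^2-\tfrac{m_N}{2}+\tfrac{1}{24}$, $m_N^2+m_N-\tfrac{1}{24}$, $m_N^2+m_N+\tfrac18$, whereas the printed statement of the lemma encodes $-m_N^2+\tfrac{m_N}{2}+\tfrac{1}{24}$, $m_N^2-\tfrac{1}{24}$, $m_N^2+\tfrac18$. The two disagree by $m_N\log\big(m_N/(N\Lap Q(0))\big)$, which is of order $N^{1/6}\log N$ for $m_N=N^{1/6}$, so it is not absorbable into the error. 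You can confirm the first triple (not the printed one) is correct either from the cancellation requirement you invoke — the $\log m_N$-coefficient in Lemma~\ref{Lem_sum of log hj lower symplectic} is $m_N^2+\tfrac{m_N}{2}-\tfrac{1}{24}$, which forces $-m_N^2-\tfrac{m_N}{2}+\tfrac{1}{24}$ here — or directly in the Ginibre case $Q(z)=|z|^2$, where $\widetilde{h}_{2j+1}=(2j+1)!/(2N)^{2j+2}$ and the sum is a ratio of Barnes $G$-functions. In short, the printed statement appears to carry a typo in its linear-in-$m_N$ piece (the $m_N\cdot\tfrac12\log(m_N/\pi)$ should read $m_N\big(\log(N\Lap Q(0))-\tfrac12\log(\pi m_N)\big)$), but your method is the paper's method, it will produce the self-consistent formula, and the downstream proof of Theorem~\ref{Thm_ZN disc}(ii) is unaffected since these $m_N$-terms cancel against the low-index sum anyway.
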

\begin{proof}
Note that by Lemma~\ref{Lem_hj higher}, we have
\begin{align*}
  \sum_{j=m_N}^{N-1} \log \widetilde{h}_{2j+1} = & \sum_{j=m_N}^{N-1} \Big(-2NV_{\wt{\tau}(2j+1)}(r_{\wt{\tau}(2j+1)}) + \log r_{\wt{\tau}(2j+1)} - \frac{1}{2}\log\Lap Q(r_{\wt{\tau}(2j+1)}) +\frac{1}{2N}\mathfrak{B}_1(r_{\wt{\tau}(2j+1)}) \Big) 
  \\
  & +\frac{(N-m_N)}{2}\log \Big(\frac{\pi}{N}\Big) + O(m_N^{-\frac{1}{2}}(\log N)^{\alpha}).
\end{align*}
The lemma now follows from Lemmas~\ref{Sum:Vtau disc symplectic}, \ref{Lem_sum of Lap Q r disc symplectic} and \ref{Lem_sum of B1 higher symplectic}.
\end{proof}

We now finish the proof of Theorem~\ref{Thm_ZN disc}. 

\begin{proof}[Proof of Theorem~\ref{Thm_ZN disc} (ii)]
Combining Lemmas~\ref{Lem_sum of log hj lower symplectic} and \ref{Lem_sum of log hj higher symplectic}, after long but straightforward simplifications, we obtain 
\begin{align*}
&\quad \sum_{j=0}^{N-1} \log (2\widetilde{h}_{2j+1}) =N \log 2+ \sum_{j=0}^{m_N-1} \log \widetilde{h}_{2j+1} + \sum_{j=m_N}^{N-1} \log \widetilde{h}_{2j+1} 
\\
&=-2N^2 I_{Q}[\mu_Q] -\frac{1}{2} N \log N + N\Big( \frac{\log (4\pi)}{2} -U_{\mu_Q}(0) -\frac12 E_{Q}[\mu_Q] \Big) -\frac1{24}\log N
  \\
  &\quad+ \frac{5}{24}\log 2 +\frac12 \zeta'(-1) +\frac12 F_Q[\mathbb{D}_{r_1}] +\frac{1}{8} \log\Big( \frac{ \Lap Q(0) }{ \Delta Q(r_1) } \Big) +O(m_N^{-1}+N^{-\frac{1}{2}(1-5\epsilon)}(\log N)^3).
\end{align*}
Again, it is noteworthy that all the terms in Lemmas~\ref{Lem_sum of log hj lower symplectic} and \ref{Lem_sum of log hj higher symplectic} involving $m_N$ cancel each other. 
Now the asymptotic behavior \eqref{ZN expansion symp disc} follows from \eqref{ZN random normal symplectic} and the asymptotic expansion \eqref{log N!} of $\log N!$ with $\epsilon=1/6$. 
\end{proof}

\section{Examples: Mittag-Leffler ensemble and truncated unitary ensemble} \label{Appendix_Examples}

This section presents examples of our Theorems~\ref{Thm_ZN annulus} and \ref{Thm_ZN disc} for some well-known planar point processes. 

\subsection{Mittag-Leffler ensemble} \label{Subsec_ML}
Let us consider the potential
\begin{equation} \label{Q ML}
	Q(z)=|z|^{2\lambda}-2c \log|z|, \quad c > 0. 
	\end{equation} 
The models \eqref{Gibbs cplx beta} and \eqref{Gibbs symplectic beta} associated with the potential \eqref{Q ML} are known as the Mittag-Leffler ensemble \cite{ameur2018random}. 
We refer to \cite{byun2022characteristic,charlier2021large,charlier2022exponential} and \cite{akemann2021scaling} for recent studies on complex and symplectic Mittag-Leffler ensembles, respectively.
Using \eqref{droplet}, we have
	\begin{equation} \label{density power Q c}
	r_0= \Big(\frac{c}{\lambda}\Big)^{\frac{1}{2\lambda}},\qquad r_1= \Big(\frac{1+c}{\lambda}\Big)^{\frac{1}{2\lambda}}, \qquad \Delta Q(z)= \lambda^2 |z|^{2\lambda-2}.
	\end{equation}
In particular, by \eqref{density power Q c}, the Mittag-Leffler ensemble \eqref{Q ML} falls into the class considered in Theorem~\ref{Thm_ZN annulus}. 

By direct computations using \eqref{entropy}, \eqref{energy radially sym} and \eqref{potential radially sym}, we have
\begin{align}
I_Q[\mu_Q] & = \frac{1}{2\lambda }\log \Big( \frac{c^{c^2} }{(1+c)^{(1+c)^2}} \Big)+\frac{1+2c}{2\lambda}\Big(\log \lambda+\frac32\Big), \label{IQ ML}
\\
E_Q[\mu_Q] & =\frac{1+\lambda}{\lambda} \log \lambda + \frac{1-\lambda}{\lambda} \Big(1+ \log\Big(\frac{c^c}{(1+c)^{1+c} }\Big) \Big). \label{EQ ML}
\end{align}
It also follows from \eqref{FQ annulus} and
\begin{equation}
r\frac{ (\partial_r \Lap) Q (r) }{ \Lap Q (r)}= 2 \lambda-2, \qquad r^2 \Lap Q(r)= \lambda^2 r^{2\lambda}
\end{equation}
that 
\begin{equation}
F_Q[\mathbb{A}_{r_0,r_1}] = \Big( \frac{\lambda}{6}-\frac{(\lambda-1)^2}{6} \Big) \log \Big( \frac{r_0}{r_1}\Big)  =-\frac{ \lambda^2-3\lambda+1 }{12 \lambda}\log\Big( \frac{c}{1+c}\Big). \label{FQ ML} 
\end{equation}

On the other hand, by using \eqref{ZN random normal symplectic},
\begin{equation}
h_j= 2\int_0^\infty r^{2j+1+2cN} e^{-N r^{ 2\lambda }} \,dr= \frac{ 1 }{ \lambda } N^{- \frac{j+1+cN}{ \lambda } } \Gamma\Big( \frac{j+Nc+1}{ \lambda } \Big ) , 
\end{equation}
and the analogous formula for $\wt{h}_j$, we have 
\begin{equation}
Z_N= \frac{ N!} {\lambda^{N} } N^{-\frac{ (2c+1)N^2+N }{2\lambda}  } \prod_{j=0}^{N-1} \Gamma\Big( \frac{j+Nc+1}{\lambda} \Big), \qquad \wt{Z}_N =\frac{ N! }{ (\lambda/2)^N }\, (2N)^{-\frac{ (2c+1)N^2+N }{\lambda}  }  \prod_{j=0}^{N-1} \Gamma\Big( \frac{j+Nc+1}{\lambda/2} \Big).
\end{equation}
Furthermore, using the multiplication theorem of gamma function (\cite[Eq. (5.5.6)]{olver2010nist})
\begin{equation}\label{Gamma multiplication}
\Gamma(nz)= (2\pi)^{ \frac{1-n}{2} } n^{ nz-\frac12 } \prod_{k=0}^{n-1} \Gamma(z+\tfrac{k}{n})
\end{equation}
and the characteristic property \eqref{Barnes G def} of the Barnes $G$-function, we have that for $ \frac{1}{\lambda} \in \mathbb{N}$,
\begin{equation} \label{ZN ML cplx}
Z_N=  N!\, N^{-\frac{ (2c+1)N^2+N }{2\lambda}  } (2\pi)^{ ( \frac12-\frac{1}{2\lambda} ) N } \Big( \frac{1}{\lambda} \Big)^{ (\frac{1}{2\lambda}+\frac{c}{\lambda} )N^2+ ( \frac{1}{\lambda}+\frac{1}{2}-\frac{1}{2\lambda} ) N  }  \prod_{k=0}^{ \frac{1}{\lambda}-1} \frac{ G( N+Nc+1+\lambda k)}{ G( Nc+1+\lambda k) }.
\end{equation}
Similarly, for $ \frac{2}{\lambda} \in \mathbb{N}$, we have 
\begin{equation} \label{ZN ML symplectic}
\wt{Z}_N = N! \, (2N)^{-\frac{ (2c+1)N^2+N }{\lambda}  } (2\pi)^{ ( \frac12-\frac{1}{\lambda} ) N } \Big( \frac{2}{\lambda} \Big)^{ (\frac{1}{\lambda}+\frac{2c}{\lambda} )N^2+ ( \frac{2}{\lambda}+\frac{1}{2}-\frac{1}{\lambda} ) N  }  \prod_{k=0}^{ \frac{2}{\lambda}-1} \frac{ G( N+Nc+1+\frac{\lambda}{2} k)}{ G( Nc+1+\frac{\lambda}{2} k) } .
\end{equation}
Then by using \eqref{Barnes G asymp}, one can directly check that the partition functions \eqref{ZN ML cplx} and \eqref{ZN ML symplectic} satisfy the expansions \eqref{ZN expansion cplx annulus} and \eqref{ZN expansion symp annulus} with \eqref{IQ ML}, \eqref{EQ ML} and \eqref{FQ ML}.

\subsection{Truncated unitary ensemble}\label{Subsec_truncated}
We now consider the potential
\begin{equation} \label{Q truncated unitary strong}
Q(z)=
\begin{cases}
-\alpha \, \log \Big( 1-\dfrac{|z|^2}{R^2(1+\alpha)  } \Big) &\textup{if } |z| \le R\sqrt{1+\alpha },
\smallskip 
\\
\infty &\textup{otherwise}, 
\end{cases} \qquad \alpha,R>0. 
\end{equation}
The models associated with \eqref{Q truncated unitary strong} correspond to the truncated unitary ensembles at strong non-unitarity \cite{khoruzhenko2021truncations,MR1748745}.
These models provide a one-parameter generalization of the Ginibre ensembles that can be recovered in the extremal case, i.e. $\lim_{\alpha \to \infty} Q(z)= |z|^2/R^2$. 
(See \cite{dubach2021eigenvector,serebryakov2021characteristic} and references therein for recent works on these models.)
In this case, we have
\begin{equation} \label{density truncated}
r_0=0,\qquad r_1=R, \qquad \Delta Q(z) = \frac{ R^2\, \alpha (1+\alpha ) }{ ( R^2(1+\alpha) -|z|^2)^2}.
\end{equation}
From \eqref{density truncated}, we see that the truncated unitary ensembles are contained in the class covered in Theorem~\ref{Thm_ZN disc}. 
(The hard edge condition imposed in \eqref{Q truncated unitary strong} outside the droplet does not harm the proof of Theorem~\ref{Thm_ZN disc}.)

It is easy to verify from \eqref{entropy}, \eqref{energy radially sym} and \eqref{potential radially sym} that 
\begin{align}
I_Q[\mu_Q] & = -\frac{\alpha }{2}-\frac{\alpha(2+\alpha )}{2} \log \Big( \frac{\alpha }{1+\alpha }\Big) -\log R, \label{IQ trunc}
\\
E_Q[\mu_Q]&=- 2-(1+2\alpha )\log \Big( \frac{\alpha }{1+\alpha } \Big) -2\log R \label{EQ trunc}.
\end{align}
Since 
\begin{equation}
r\frac{ (\partial_r \Lap) Q (r) }{ \Lap Q (r)}\Big|_{r=R} = \frac{4r^2}{R^2(1+\alpha)-r^2} \Big|_{r=R}= \frac{4}{\alpha} , \qquad R^2 \Lap Q(R)= \frac{ 1+\alpha }{ \alpha } , 
\end{equation}
we deduce from \eqref{FQ disc} that 
\begin{equation}
F_Q[\mathbb{D}_{R}]= \frac{1}{12}\log \Big( \frac{\alpha}{1+\alpha}\Big) -\frac{1}{4\alpha} +\frac{1}{3}\Big( \frac{1}{\alpha}+\log \Big( \frac{\alpha}{1+\alpha} \Big) \Big)  = \frac{1}{12}\Big( \frac{1}{\alpha }+5 \log\Big(\frac{\alpha }{1+\alpha }\Big) \Big). \label{FQ trunc}
\end{equation}
Notice here that $F_Q[\mathbb{D}_{R}]$ is independent of $R$, which is consistent with the invariance of the $O(1)$-terms of \eqref{ZN expansion cplx disc} and \eqref{ZN expansion symp disc} under the dilation, see the remark below Theorem~\ref{Thm_ZN disc}. 

Using the Euler's beta integral
\begin{equation}
 \frac{\Gamma(a) \Gamma(b)}{ \Gamma(a+b) } = \int_0^1 t^{a-1} (1-t)^{b-1}\,dt, \qquad (a,b>0), 
\end{equation}
the orthogonal norm $h_j$ is computed as 
\begin{equation}
h_j=\int_0^{ R\sqrt{1+\alpha} } r^{2j+1} \Big( 1- \frac{r^2}{ R^2(1+\alpha) } \Big)^{\alpha N} \,dr= R^{2j+2}(1+\alpha )^{j+1} \frac{ \Gamma(\alpha N+1) \Gamma(j+1) }{ \Gamma(\alpha N+j+2) }.
\end{equation}
Then by \eqref{ZN random normal symplectic} and \eqref{Barnes G def}, we have 
\begin{equation}\label{ZN trunc cplx}
Z_N= N! \,R^{N(N+1)} (1+\alpha )^{ \frac{N^2}{2}+\frac{N}{2} } \, \Gamma(\alpha N+1)^N \, \frac{ G(N+1)G(\alpha N+2) }{ G(\alpha N+N+2) }.
\end{equation}
Similarly, by \eqref{ZN random normal symplectic} and the duplication formula of the gamma function (i.e. \eqref{Gamma multiplication} with $n=2$),
\begin{equation} \label{ZN trunc symplectic}
\begin{split}
\wt{Z}_N & = N! \, R^{2N(N+1)} 2^{-2\alpha N^2}\, (1+\alpha )^{ N^2+N } \, \Gamma(2\alpha N+1)^N 
\\
&\quad \times  G(N+1) \frac{G(N+\frac32)}{G(\frac32)} \frac{G(\alpha N+2)}{G(\alpha N+N+2)}  \frac{G(\alpha N+\frac32)}{ G(\alpha N+N+\frac32)} . 
\end{split}
\end{equation}
Then by using \eqref{Barnes G asymp}, it is again straightforward to check that the partition functions \eqref{ZN trunc cplx} and \eqref{ZN trunc symplectic} satisfy the expansions \eqref{ZN expansion cplx disc} and \eqref{ZN expansion symp disc} with \eqref{IQ trunc}, \eqref{EQ trunc} and \eqref{FQ trunc}. 
In the extremal case where $\alpha \to \infty$, the expansion of the partition function $Z_N$ of the complex Ginibre ensemble appears in \cite{tellez1999exact}.

\subsection*{Acknowledgements}
We thank Christophe Charlier and Thomas Lebl\'{e} for helpful discussions.

\bibliographystyle{abbrv}
\bibliography{RMTbib}

\end{document}